\newtheorem{thm}{Theorem}[section]
 \newtheorem{cor}[thm]{Corollary}
 \newtheorem{lem}[thm]{Lemma}
 \newtheorem{prop}[thm]{Proposition}
 \theoremstyle{definition}
 \newtheorem{defn}{Definition}[section]
 \theoremstyle{remark}
 \newtheorem{rem}{Remark}[section]
 \numberwithin{equation}{section}
\renewcommand{\>}{\rangle}
\newcommand{\al}{\alpha}
\newcommand{\la}{\lambda}
\newcommand{\ga}{\gamma}
\newcommand{\Ga}{\Gamma}
\newcommand{\Om}{\Omega}
\newcommand{\si}{\sigma}
 \newcommand{\CC}{\mathbb{C}} 
 \newcommand{\GG}{\mathbb{G}} 
\newcommand{\HH}{\mathbb{H}} \newcommand{\II}{\mathbb{I}} 
\newcommand{\LL}{\mathbb{L}}  
\newcommand{\NN}{\mathbb{N}}
\newcommand{\RR}{\mathbb{R}}
\newcommand{\XX}{\mathbb{X}}
\DeclareSymbolFontAlphabet{\mathbbm}{bbold}
\DeclareSymbolFontAlphabet{\mathbb}{AMSb}%
\DeclareMathAlphabet{\mathmybb}{U}{bbold}{m}{n}
\newcommand{\ab}{\alpha}
\newcommand{\one}{\mathbbm{1}}
\newcommand{\Bc}{\mathcal{B}}
\newcommand{\Cc}{\mathcal{C}}
\newcommand{\Ec}{\mathcal{E}}
\newcommand{\Ic}{\mathcal{I}}
\newcommand{\Lc}{\mathcal{L}}
\newcommand{\Tc}{\mathcal{T}}
\newcommand{\Uc}{\mathcal{U}}
\newcommand{\Zc}{\mathcal{Z}}
 \newcommand{\Hf}{\mathfrak{H}}
 \newcommand{\Sf}{\mathfrak{S}} \newcommand{\Tf}{\mathfrak{T}} 
\newcommand{\Xf}{\mathfrak{X}} \newcommand{\Yf}{\mathfrak{Y}}
\newcommand{\af}{\mathfrak{a}} \newcommand{\bfr}{\mathfrak{b}}
\newcommand{\mf}{\mathfrak{m}}
\newcommand{\nf}{\mathfrak{n}}
\newcommand{\Nr}{\mathrm{N}}
\newcommand{\rr}{\mathrm{r}}
\newcommand{\ubf}{\mathbf{u}}
\newcommand{\vbf}{\mathbf{v}}
\newcommand{\wt}{\widetilde}
\newcommand{\wh}{\widehat}
\newcommand{\<}{\langle}
\newcommand{\uph}{\upharpoonright}
\newcommand{\imb}{\hookrightarrow}
\newcommand{\ii}{\mathrm{i}}
\newcommand{\ee}{\mathrm{e}}
\newcommand{\dd}{\mathrm{d}}
\newcommand{\pa}{\partial}
\newcommand{\disc}{\mathrm{disc}}
\newcommand{\ess}{\mathrm{ess}}
\newcommand{\sym}{\mathrm{sym}}
\DeclareMathOperator{\im}{Im}
\DeclareMathOperator{\re}{Re}
\DeclareMathOperator{\dom}{dom}
\DeclareMathOperator{\Gr}{Gr}
\DeclareMathOperator{\Div}{div}
\DeclareMathOperator{\gradm}{\mathbf{grad}}
\DeclareMathOperator{\gradn}{\mathbf{grad}_0}
\DeclareMathOperator{\Divn}{\Div_0}
\DeclareMathOperator{\Hom}{{\mathcal{LH}}}
\newcommand{\Ao}{A}
\newcommand{\A}{\mathcal{A}}
\newcommand{\gan}{\ga_{\mathrm{n}}}
\newcommand{\x}{\mathbf{x}}
\newcommand{\n}{\mathbf{n}}
\newcommand{\Hs}{\Hf}
\newcommand{\D}{\Om}
\newcommand{\cross}{{\natural}}
\newcommand{\mul}{\mathcal{M}}
\newcommand{\mulz}{{\mul_{\scriptstyle \zeta}}}
\begin{document}
\title{M-dissipative generalized impedance  boundary conditions, discrete spectra, and pointwise multipliers between fractional Sobolev spaces} 
\author{}
\date{}
\maketitle
  
{ \center{\large Illya M. Karabash
\\[2ex] 
}   }  
  
 {\small 
\noindent Institute for Applied Mathematics, University of Bonn, Endenicher Allee 60,
53115 Bonn, Germany;\\[1ex]
E-mails: ikarabas@uni-bonn.de, i.m.karabash@gmail.com\\}

\medskip

\vspace{4ex}

\begin{abstract}
The paper studies properties of acoustic operators in bounded Lipschitz domains $\Omega$ with m-dissipative  generalized impedance boundary conditions.
We prove that such acoustic operators have compact resolvent if and only if the impedance operator from the trace space $H^{1/2} (\partial \Omega)$ to the other trace space $H^{-1/2} (\partial \Omega)$ is compact. This result is applied to the question of the discreteness of the spectrum and to the particular cases of damping and impedance boundary conditions. The method of the paper is based on abstract results written in terms of boundary tuples and is applicable to other types of wave equations.
\end{abstract}

\vspace{1ex}
{\small
\noindent
MSC2020-classes: 
35F45   
35P05   
58J90 
47B44 
47F10  
47D03 
\\[0.5ex]
Keywords:  absorbing boundary condition,  generalized impedance boundary conditions, accretive operator, impedance coefficient, open cavity, lossy resonator, resolvent compactness, m-boundary tuple, boundary triple
}

\medskip

{\noindent\small \textsc{Acknowledgement.} 
The author's research was supported by the Heisenberg Programme (project 509859995) of the German Science Foundation (DFG, Deutsche Forschungsgemeinschaft), by the Hausdorff Center for Mathematics funded by the German Science Foundation under Germany's Excellence Strategy -- EXC-2047/1 -- 390685813, 
and by the CRC 1060 `The mathematics of emergent effects' funded by the German Science Foundation at the University of
Bonn. 
}

\section{\label{s:i}Introduction}

The main goal of this paper is to study the discreteness of spectra of  eigenvalue problems 
\begin{gather}\label{e:EqEl}
- \nabla \cdot ( \al^{-1} (\x) \nabla p (\x))  =  \la^2 \beta (\x) p (\x)  ,  \qquad \x \in \D \subset \RR^d, \quad d \ge 2,  
\end{gather}
equipped on the boundary $\pa \D$ of a bounded domain $\D$ with dissipative boundary conditions of the form
\begin{gather} \label{e:IBCEl}
\ii \la \Zc \ga_0 (p) =  \gan ( \ab^{-1} \nabla p)  ,
\end{gather}
where $\ga_0 : p \mapsto p\!\uph_{ \pa \D}$ and $\gan : \vbf  \mapsto \n \cdot (\vbf \uph_{\pa \D})$ are the scalar and normal traces, respectively. The boundary $\pa \D$ is supposed to have the Lipschitz regularity.
We assume that the coefficients  $\ab (\cdot)  = (\ab_{j,k} (\cdot))_{j,k=1}^d \in L^\infty (\D, \RR_{\sym}^{d\times d})$ and $\beta (\cdot) \in L^\infty (\D,\RR)$ are uniformly positive in $\D$ (see Sections \ref{s:SymAcOp}-\ref{s:OpGenIBC} for the detailed formulation of the problem). 

The \emph{impedance operator} $\Zc : \dom \Zc \subseteq H^{1/2} (\pa \D) \to H^{-1/2} (\pa \D)$ in \eqref{e:IBCEl} is  supposed to be \emph{accretive}
as an operator from  $H^{1/2} (\pa \D)$ to $H^{-1/2} (\pa \D)$
in the sense that $\re \<\Zc h,h\>_{L^2 (\pa \D)} \ge 0$ for every $h$ in the domain $\dom \Zc$ (of definition) of $\Zc$,
where $\< \cdot,\cdot\>_{L^2 (\pa \D)} $ in the standard sesquilinear pairing of $H^{-1/2} (\pa \D)$ and $H^{1/2} (\pa \D)$ obtained by the continuation from the inner  product $(\cdot|\cdot)_{L^2 (\pa \D)}$ of the complex Hilbert space $L^2 (\pa \D)=L^2 (\pa \D,\CC)$. By $\re z$
and $\im z$ we denote the real and imaginary parts of $z \in \CC$.

Equation \eqref{e:EqEl} stems from the time-harmonic formulation for acoustic \cite{L13,KZ15,S21} and dimensionally reduced Maxwell equations \cite{FK96,ACL18}. The boundary condition \eqref{e:IBCEl} is one of the forms of generalized impedance boundary conditions (GIBCs), see \cite{HJN05,K24} (for GIBCs in the context of Maxwell systems, see also \cite{ACL18,EK22}).

The well-known \emph{impedance boundary condition} \cite{L83,HJN05,CK13}
\begin{gather} \label{e:Imp}
\ii \la \zeta (\x) p (\x) =  \n (\x) \cdot (\al^{-1} \nabla p (\x)) , \qquad \x \in \pa \D
\end{gather}
 is the  particular case of  \eqref{e:EqEl} that appears if the impedance operator $\Zc$ is the operator of multiplication $\mul_{\zeta}$ on a function $\zeta:\pa \D \to \overline{\CC}_\rr$ measurable with respect to (w.r.t.) the surface measure of $\pa \D$. Here $\overline{\CC}_\rr := \{z \in \CC : \re z  \ge 0\}$
is the closure of the complex open right half-plane $\CC_\rr := \{z \in \CC : \re z  > 0\}$. 
The function $\zeta$ is called \emph{impedance coefficient}.

In the Mechanics context, \eqref{e:Imp} bears the name of the boundary damping condition. In this case, the boundary damping coefficient $\zeta (\cdot)$ is usually assumed to have values in $\overline{\RR}_+ := [0,+\infty)$ \cite{CZ95}. 
The impedance boundary condition kindred to \eqref{e:Imp}  is used with Maxwell systems, in particular, to model leaky optical cavities, and is called often the Leontovich boundary condition \cite{LL84,K94,LL04,YI18,EK22,EK24}. In the context of Electrodynamics, the complex values of the impedance coefficient $\zeta (\cdot)$ appears naturally, e.g., if the cavity region $\D$ is surrounded by a weakly conductive medium. Values of $\zeta $ with close to $(-\ii )\RR_+ = \{ -\ii c  : c \in (0,+\infty) \}$ can serve as an approximation for a superconducting outer medium \cite{LL84}.

The question of the discreteness of the spectrum for the problem \eqref{e:EqEl}-\eqref{e:IBCEl} requires an appropriate interpretation. Indeed, \eqref{e:EqEl}-\eqref{e:IBCEl} allows one to define associated eigenvalues. However, the discreteness of the spectrum is defined essentially as the emptiness of the essential spectrum. Since the spectral parameter $\la$ enters into the boundary condition \eqref{e:IBCEl}, it is difficult to define the essential spectrum for \eqref{e:EqEl}-\eqref{e:IBCEl} without an appropriate operator reformulation. 

One of operator reformulations for the problem \eqref{e:EqEl}-\eqref{e:IBCEl} is connected with generators of acoustic semigroups and can be written in the block-matrix form as
\begin{gather} \label{e:BMEq}
 \ii \begin{pmatrix}  0 & I  \\
- \beta^{-1} \Div \al^{-1} \gradm & 0  \end{pmatrix} \begin{pmatrix} u  \\ p   \end{pmatrix} = \la \begin{pmatrix} u  \\ p   \end{pmatrix} , 
\end{gather}
with the boundary condition 
\begin{gather} \label{e:GIBC}
\Zc  \ga_0 (p) =  \gan (-\al^{-1} \nabla u)  , \quad \x \in \pa \D .
\end{gather}

In the phase space $\XX$ consisting of pairs $\Phi = \{u,p\}$ and equipped with the `energy norm'
\[
\|\Phi \| = \left( \int_\D \al^{-1} |\nabla u|^2  + \int_\D \beta | p|^2  \right)^{1/2} ,
\]
one can associate with \eqref{e:BMEq}-\eqref{e:GIBC} an operator $\Bc_\Zc$ (see the details in Section \ref{s:SymAcOp}-\ref{s:OpGenIBC}).
The operator $\Bc_\Zc = \Bc_{\Zc,\al,\beta}$ depends on $\D$, $Z$, $\al$, $\beta$. However,
the Lipschitz  domain $\D$ and the coefficients $\al$ and $\beta$ are assumed to be fixed throughout the paper.
Therefore we keep only the impedance operator $\Zc$ in the notation for $\Bc_\Zc$.

Due to the assumption of accretivity of $\Zc$,
the operator $\Bc_\Zc$ is dissipative in the Hilbert space $\XX$ in the sense that $\im (\Bc_\Zc \Phi | \Phi)_\XX \le 0$ for all $\Phi \in \dom \Bc_\Zc$. The conventions \cite{E12,EK22} that we use for dissipative, accretive, and m-dissipative operators are described in Section \ref{s:AbsMdis}. 

The special role of m-dissipative operators $\Bc_\Zc$ in modeling of lossy (dissipative)  resonators is that they provide the characterization of the case where $(-\ii) \Bc_\Zc$ is 
a generator of a contraction semigroup  $\{\ee^{-\ii t \Bc_\Zc}\}_{t>0}$. The particular case of a conservative resonator corresponds to a selfadjoint operator $\Bc_\Zc$, and so, to the unitary group $\{\ee^{-\ii t \Bc_\Zc}\}_{t\in \RR}$. 

It is often expected that a model of a dissipative resonator in a bounded domain $\D$ should lead to an operator with a purely discrete spectrum.
The main result of this paper is the following characterization.

\begin{thm} \label{t:main}
The following two statements are equivalent:
\begin{itemize}
\item[(i)]  the acoustic operator $\Bc_\Zc$ is an m-dissipative operator in $\XX$ with a compact resolvent;
\item[(ii)] the impedance operator $\Zc$ is accretive and compact as an operator from $H^{1/2} (\pa \D)$ to $H^{-1/2} (\pa \D)$.
\end{itemize}
If statements (i)-(ii) hold true, then  $\Bc_\Zc$ has a purely discrete spectrum.
\end{thm}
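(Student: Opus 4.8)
The plan is to reduce every assertion about $\Bc_\Zc$ to a problem on the boundary $\paD$ via the m-boundary tuple attached to the maximal acoustic operator, and then to read off m-dissipativity and resolvent compactness directly from the parameter $\Zc$. I would take as boundary maps the scalar trace $\Ga_0 \Phi := \ga_0(p) \in H^{1/2}(\paD)$ and the normal trace $\Ga_1 \Phi := \gan(-\al^{-1}\nabla u) \in H^{-1/2}(\paD)$, so that $\Bc_\Zc$ is exactly the restriction of the maximal operator selected by the coupling $\Ga_1 = \Zc\,\Ga_0$. The distinguished reference is then $\Bc_0$, the case $\Zc=0$, i.e.\ the homogeneous condition $\gan(-\al^{-1}\nabla u)=0$; this is the self-adjoint (conservative) realization. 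The first step is to show that $\Bc_0$ has compact resolvent: on the energy space $\XX$ its resolvent lifts the $p$-component into $H^1(\D)$ and, through the associated Neumann boundary value problem and elliptic regularity, controls $\nabla u$ in a space that embeds compactly into $L^2(\D)$, so compactness follows from the Rellich--Kondrachov embedding $H^1(\D)\hookrightarrow\hookrightarrow L^2(\D)$ on the bounded Lipschitz domain.

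The core of the equivalence (i)$\Leftrightarrow$(ii) is a Krein-type resolvent formula supplied by the boundary-tuple machinery. Writing $M(\la)\colon H^{-1/2}(\paD)\to H^{1/2}(\paD)$ for the Neumann-to-Dirichlet Weyl function of $\Bc_0$ and $\gamma_\la\colon H^{-1/2}(\paD)\to\XX$ for the corresponding solution (Poisson) operator, I would use, for $\la$ with $\im\la>0$,
\[
(\Bc_\Zc-\la)^{-1} = (\Bc_0-\la)^{-1} + \gamma_\la\,(I-\Zc M(\la))^{-1}\,\Zc\,\gamma_{\bar\la}^{\,*},
\]
where $\gamma_{\bar\la}^{\,*}\colon\XX\to H^{1/2}(\paD)$ is bounded and $(I-\Zc M(\la))^{-1}$ is bounded on $H^{-1/2}(\paD)$. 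The m-dissipativity half of the equivalence then follows from the abstract correspondence between boundary parameter and extension: $\Zc$ accretive and compact---hence bounded, everywhere defined, and therefore maximal accretive---forces $\Bc_\Zc$ to be m-dissipative, while conversely m-dissipativity of $\Bc_\Zc$ returns accretivity of $\Zc$. For compactness the decisive feature is that in the displayed formula $\Zc\colon H^{1/2}(\paD)\to H^{-1/2}(\paD)$ enters as a factor sandwiched between bounded maps, with no inversion; since $(\Bc_0-\la)^{-1}$ is already compact, $(\Bc_\Zc-\la)^{-1}$ is compact exactly when the boundary term is, and compactness of $\Zc$ makes the boundary term compact at once.

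The main obstacle is the converse compactness implication: recovering compactness of $\Zc$ as a map $H^{1/2}(\paD)\to H^{-1/2}(\paD)$ from compactness of the boundary term $(\Bc_\Zc-\la)^{-1}-(\Bc_0-\la)^{-1}$. This is precisely where the fine mapping properties of the tuple on the Lipschitz domain are needed: one must show that $\gamma_{\bar\la}^{\,*}$ is surjective onto $H^{1/2}(\paD)$ and that $\gamma_\la$ is bounded below on $H^{-1/2}(\paD)$ (injective with closed range), so that, the bounded isomorphism $(I-\Zc M(\la))^{-1}$ being harmless, $\Zc$ is recovered from the boundary term modulo bounded isomorphisms and compactness transfers back. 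Once (i)$\Leftrightarrow$(ii) is established, the final assertion is routine: under (i) the operator $\Bc_\Zc$ is m-dissipative, so $(-\ii)\Bc_\Zc$ generates a contraction semigroup and $\rho(\Bc_\Zc)$ contains $\{\la : \im\la>0\}$ and is in particular nonempty; combined with compactness of the resolvent at one (hence, by the resolvent identity, every) point of $\rho(\Bc_\Zc)$, the spectral theorem for compact operators shows that $\sigma(\Bc_\Zc)$ consists of isolated eigenvalues of finite algebraic multiplicity with no finite accumulation point, i.e.\ $\Bc_\Zc$ has purely discrete spectrum.
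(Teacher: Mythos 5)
Your overall skeleton coincides with the paper's: fix the Neumann realization $\Bc_0=\wh\Bc_{\Nr}$ as a selfadjoint reference with compact resolvent, then transfer compactness back and forth between $\Zc$ and the resolvent difference through the boundary parametrization. Your implementation, however, is genuinely different: you work with an explicit Krein-type resolvent formula involving the Neumann-to-Dirichlet Weyl function $M(\la)$, whereas the paper reduces the m-boundary tuple to an ordinary boundary triple (Proposition \ref{p:BT}), parametrizes all m-dissipative restrictions by contractions via the Kochubei description (Proposition \ref{p:absM-dis}), identifies the impedance condition with the contraction $K=\Cc_{Z_V}$ (the Cayley transform of $V^\cross\Zc V$) and the Neumann condition with $K_1=-I$, and then invokes the Schatten-class transfer result of Proposition \ref{p:R-RinS} (ultimately \cite[Theorem 3.1]{GG91}) together with Lemma \ref{l:SinfEq}. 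The two routes are close relatives (the cited transfer result is itself proved via a Krein formula), but two steps of your version have genuine gaps.

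First, your argument for the compact resolvent of $\Bc_0$ does not work in the paper's generality. You appeal to elliptic regularity for the Neumann problem to place $\nabla u$ in a space that embeds compactly into $L^2(\D)$. But $\pa\D$ is only Lipschitz and $\ab^{-1}$ is only $L^\infty$ and uniformly positive: solutions of $\nabla\cdot(\ab^{-1}\nabla u)=f\in L^2(\D)$ with $\gan(\ab^{-1}\nabla u)=0$ have, in general, no Sobolev gain for the gradient beyond Meyers' $L^{2+\epsilon}$ estimate, and $L^{2+\epsilon}(\D)\imb L^2(\D)$ is bounded but not compact; even for $\ab=\II$ on a Lipschitz domain one does not get $u\in H^2(\D)$. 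The conclusion is nevertheless true: it follows from the energy argument (if $f_n\Weak 0$ in $L^2(\D)$, then $u_n\to 0$ in $L^2(\D)$ by Rellich, whence $\|\nabla u_n\|^2_{\LL^2}\lesssim |(f_n|u_n)_{L^2(\D)}|\to 0$), or from \cite[Corollary 7.12]{L13}, which is what the paper uses (Lemma \ref{l:AN} plus the unitary equivalence of Section \ref{s:1stOrder}, giving \eqref{e:BN}).

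Second, and more seriously, your converse direction (i)$\Rightarrow$(ii) tacitly assumes that $\Zc$ is bounded. Hypothesis (i) yields, via Proposition \ref{p:Mdis2}, only that $\Zc$ is closed, densely defined and maximal accretive; boundedness is part of what must be proved, since compactness includes it. For unbounded $\Zc$ the middle factor $(I-\Zc M(\la))^{-1}\Zc$ of your formula cannot be read literally ($I-\Zc M(\la)$ is then unbounded) and must be recast as the bounded operator $S:=(\Zc^{-1}-M(\la))^{-1}$, with $\Zc^{-1}$ understood as a linear relation. Likewise your final step --- recovering $\Zc$ from the boundary term ``modulo bounded isomorphisms'' --- is impossible verbatim: an operator obtained from a compact operator by composing with bounded isomorphisms is bounded, so the step presupposes exactly the boundedness that is in question. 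What actually has to be shown is that compactness of $S$ forces $\Zc$ to be bounded and compact, and this requires a Fredholm-alternative argument (e.g.\ $\Zc=(I+SM(\la))^{-1}S$ after proving that $I+SM(\la)$ is injective), which is absent from your sketch. This is precisely the difficulty the paper's Cayley-transform device is built to bypass: $\Cc_{Z_V}$ is a contraction for every maximal accretive $\Zc$, bounded or not, and the Fredholm step is carried out once, in Lemma \ref{l:SinfEq}, using the injectivity statement \eqref{e:CZV-I}. Your plan is completable, but only after these two repairs.
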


This theorem is proved in Section \ref{s:ProofMT}. 

If the adjoint $\Zc^\cross:  H^{1/2} (\pa \D) \to H^{-1/2} (\pa \D)$ to $\Zc$ w.r.t. the pairing $\< \cdot,\cdot\>_{L^2 (\pa \D)} $ satisfies $\Zc^\cross = - \Zc$,
the operator $\Bc_\Zc$ in Theorem \ref{t:main} becomes selfadjoint in $\XX$ and the compactness of $\Zc : H^{1/2} (\pa \D) \to H^{-1/2} (\pa \D)$ is equivalent to the discreteness of the spectrum of $\Bc_\Zc$, see Corollary \ref{c:SiDisc}.

The proof of Theorem \ref{t:main} is based on abstract operator theoretic results written in terms of m-boundary tuples. An m-boundary tuple \cite{EK22} is a generalization of the notion of boundary triple \cite{K75,GG91,DM95,P12,DM17,BHdS20} adapted to the specifics of partial differential operators (PDOs), see  Section \ref{s:BT}. Boundary tuples of various types allows one to write in abstract form linear boundary value problems for PDEs, see \cite{GG91,BL07,M10,AGW14,BM14,KZ15,DM17,S21,BGM22,DHM22,EK22,K24} and Section \ref{s:AbsMdis} below.

If $\Zc = \mulz$ is an operator of  multiplication on a measurable coefficient $\zeta:\pa \D \to \overline{\CC}_\rr$, then GIBC  \eqref{e:GIBC} becomes the impedance boundary condition of the form 
\begin{gather} \label{e:IBCint}
\zeta (\x)  p (\x) =  - \n \cdot (\al^{-1} \nabla u (\x))  , \qquad \x \in \pa \D.
\end{gather}
In this case, the criterion of Theorem \ref{t:main} says that \emph{the acoustic operator 
$\Bc_{\mulz}$ corresponding to \eqref{e:IBCint} is m-dissipative with compact resolvent if and only if $\mul_\zeta$ is a compact pointwise multiplier from $H^{1/2} (\pa \D)$ to $H^{-1/2} (\pa \D)$}.

For impedance boundary conditions, we obtain the following sufficient condition. 

\begin{thm} \label{t:ImpLq}
Consider the acoustic operator 
$\Bc_{\mulz}$ corresponding to \eqref{e:BMEq}, \eqref{e:IBCint} and 
assume that $\zeta \in L^q (\pa \D, \overline{\CC}_\rr)$ for a certain $q>d-1$. 
Then $\Bc_{\mulz}$ is an m-dissipative operator in $\XX$ with a compact resolvent and a purely discrete spectrum.
\end{thm}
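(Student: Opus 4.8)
The plan is to deduce Theorem~\ref{t:ImpLq} from the criterion of Theorem~\ref{t:main}. Since here $\Zc = \mulz$, it suffices to verify that the pointwise multiplier $\mulz$ is accretive and compact as an operator from $H^{1/2}(\pa\D)$ to $H^{-1/2}(\pa\D)$; the m-dissipativity with compact resolvent and the purely discrete spectrum then follow at once from Theorem~\ref{t:main}. Accretivity is the easy part: because $\zeta$ takes values in $\overline{\CC}_\rr$, one has $\re \zeta \ge 0$ a.e.\ on $\pa\D$, so that $\re \langle \mulz h, h\rangle_{L^2(\pa\D)} = \int_{\pa\D} (\re \zeta)\, |h|^2 \, \dd\si \ge 0$ for every $h$ in the domain of $\mulz$, the splitting of the real part under the integral being justified by the absolute convergence established below.

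The heart of the matter is the compactness of $\mulz : H^{1/2}(\pa\D) \to H^{-1/2}(\pa\D)$, and this is exactly where the hypothesis $q > d-1$ enters. Let $q' = q/(q-1)$ be the Hölder conjugate of $q$ and set $r := 2q'$. The decisive bookkeeping is that $q > d-1$ is precisely equivalent to $r < p^*$, where $p^* := \tfrac{2(d-1)}{d-2}$ (with $p^* := \infty$ when $d=2$) is the critical Sobolev exponent for $H^{1/2}$ on the $(d-1)$-dimensional boundary $\pa\D$; indeed a short manipulation turns $\tfrac{q}{q-1} < \tfrac{d-1}{d-2}$ into $q > d-1$. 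First I would record the boundedness of $\mulz$ from the two-fold Hölder estimate $|\langle \mulz h, g\rangle_{L^2(\pa\D)}| = |\int_{\pa\D} \zeta\, h\, \overline{g}\, \dd\si| \le \|\zeta\|_{L^q(\pa\D)} \|h\|_{L^r(\pa\D)} \|g\|_{L^r(\pa\D)}$ together with the continuous Sobolev embedding $H^{1/2}(\pa\D) \hookrightarrow L^r(\pa\D)$ (valid since $r < p^*$); this yields $\|\mulz h\|_{H^{-1/2}(\pa\D)} \le C\, \|\zeta\|_{L^q(\pa\D)} \|h\|_{H^{1/2}(\pa\D)}$, so that $\dom \mulz = H^{1/2}(\pa\D)$ and in particular the integral in the accretivity computation converges absolutely.

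For compactness I would invoke the Rellich--Kondrachov theorem: since $r < p^*$ is strictly subcritical, the embedding $H^{1/2}(\pa\D) \hookrightarrow L^r(\pa\D)$ is compact. Hence, given any bounded sequence $(h_n)$ in $H^{1/2}(\pa\D)$, a subsequence converges in $L^r(\pa\D)$, and the Hölder estimate above gives $\|\mulz(h_n - h_m)\|_{H^{-1/2}(\pa\D)} \le C\, \|\zeta\|_{L^q(\pa\D)} \|h_n - h_m\|_{L^r(\pa\D)} \to 0$, so that $(\mulz h_n)$ is Cauchy in $H^{-1/2}(\pa\D)$. Thus $\mulz$ is compact, and Theorem~\ref{t:main} applies. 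I expect the main obstacle to lie not in this functional-analytic scheme but in the underlying analysis on the merely Lipschitz surface $\pa\D$: one must justify both the continuous and the compact fractional Sobolev embeddings $H^{1/2}(\pa\D) \hookrightarrow L^r(\pa\D)$ on a Lipschitz boundary, pin down the exact critical exponent $p^*$, and handle separately the borderline dimension $d=2$, where $s = \tfrac12$ is critical on the $1$-dimensional boundary and $p^* = \infty$. These embedding facts belong to the fractional-Sobolev-multiplier toolbox signalled in the title and assumed available; once they are in hand, the exponent identity $q > d-1 \Leftrightarrow r < p^*$ renders the whole deduction routine.
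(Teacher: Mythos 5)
Your proposal is correct and follows essentially the same route as the paper: H\"older's inequality with $\zeta \in L^q(\pa\D)$ and two factors in $L^r(\pa\D)$, $r = 2q'$, combined with the compact subcritical embedding $H^{1/2}(\pa\D) \imb\imb L^{r}(\pa\D)$ and Theorem \ref{t:main}, which is exactly the paper's Lemma \ref{l:LqMs} (specialized to $s=1/2$, with your $r$ equal to the paper's $p$) fed into Theorem \ref{t:SingImp}(ii). The one point you flag as ``assumed available'' is the only place the paper does extra work: the compact embedding is stated as \eqref{e:SeLp} and proved in Appendix \ref{a} from the Jonsson--Wallin Besov embeddings, precisely because the author could not locate an explicit reference for fractional Sobolev spaces on Lipschitz boundaries.
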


This theorem is a part of a more general result proved in Section \ref{s:SingImp}, see Theorem \ref{t:SingImp}. In particular, Section \ref{s:SingImp} provides rigorous definitions of the multiplication operator $\mulz$ in the case of an unbounded impedance coefficient $\zeta (\cdot)$.
In combination with the rigorous interpretation of GIBCs given in Section \ref{s:OpGenIBC} and Definition \ref{d:Imp}, this gives a rigorous realization for an associated m-dissipative  acoustic operator $\Bc_{\mulz}$.
Note that, for Maxwell systems,  there exist various nonequivalent interpretations of imped\-ance boundary conditions, see \cite{K94,LL04,EK22}.

General dissipative acoustic boundary conditions were studied in \cite{KZ15,S21}. A parametrization and rigorous randomizations of m-dissipative acoustic boundary conditions were  obtained recently in \cite{K24}.
It should be stressed that there is a great variety of generalizations of impedance boundary conditions for various wave equations.
We have considered in this paper a particular version of GIBCs that is independent of time and independent of the spectral parameter (from the point of view of equation \eqref{e:BMEq}).
For other types of GIBCs, absorbing boundary conditions, and non-reflecting boundary conditions, we refer to \cite{KG89,YI18,FJL21,MAGB22} and references therein.  

\textbf{Notation.} 
Let $\Xf$, $\Xf_1$, and $\Xf_2$ be (complex) Hilbert spaces and $\Yf$, $\Yf_1$, and $\Yf_2$ be Banach spaces. 
By $\Lc (\Yf_1,\Yf_2)$ we denote the Banach space of bounded (linear) operators $T : \Yf_1 \to \Yf_2$, while 
$\Hom (\Xf_1,\Xf_2)$ is the set of \emph{linear homeomorphisms} from $\Xf_1$ to $\Xf_2$.
Besides, $\Lc (\Yf) := \Lc (\Yf,\Yf)$ and $\Hom (\Xf) := \Hom (\Xf,\Xf)$.
 By $I_\Yf$ (by $0_{\Lc(\Yf)}$) the identity operator (resp., the zero operator) in $\Yf$ is denoted, although  the subscript is dropped  if the space $\Yf$ is clear from the context. In the notation for the resolvent 
$(T-\la)^{-1}=(T-\la I)^{-1}$,   the identity operator $I$ is often skipped.

By $\one$ we denote the constant function equal to $1$. For $1 \le p \le \infty$, $\Sf_p = \Sf_p (\Xf)$  are the Schatten-von-Neumann ideals of compact operators in $\Xf$.
By $\Gr T := \{\{f,Tf\} \ : \ f \in \dom T\}$ we denote the graph of the operator $T$, and consider it as a normed space with the graph norm \cite{Kato,GG91}. We  use the natural identification of $\Gr T$ and the domain (of definition) $\dom T$ of $T$. 
The spectrum 
of an operator $T:\dom T \subseteq \Xf \to \Xf$ is denoted by $\si (T)$.
The notation $\rho (T) = \CC \setminus \si (T)$ stands for the resolvent set, whereas 
$\si_\ess (T)$ and $\si_\disc (T)$ denote the essential and the discrete spectra of $T$, respectively (see Section \ref{s:CompResAbs} for the definitions).


\section{Abstract impedance operators and resolvent compactness}
\label{s:AbsIO}

We consider the problem of the compactness of the resolvent in the abstract settings that 
fit to many types of partial differential operators (PDOs). In what follows, $\Xf$, $\Xf_1$, and $\Xf_2$ are (complex) Hilbert spaces. 
The notation $T:\dom T \subseteq \Xf_1 \to \Xf_2$ means that a (linear) operator $T$ is considered as an operator from $\Xf_1$ to $\Xf_2$
defined on a domain $\dom T  \subseteq \Xf_1$  that is a linear (possibly non-closed) subspace of $\Xf_1$.
If $\dom T = \Xf_1$, we simplify this notation to $T: \Xf_1 \to \Xf_2$.

Let  $A$ be a closed symmetric densely defined  operator  in $\Xf$.
In order to define abstract boundary conditions complementing its adjoint operator $A^*$, one needs  abstract analogues of the spaces of boundary values and the abstract integration by parts. These are introduced in the next subsection.
 
\subsection{Boundary tuples and generalized rigged Hilbert spaces} 
\label{s:BT}

The triple $(\Hs_{-,+},\Hs,\Hs_{+,-})$ consisting of (complex) Hilbert spaces is called a \emph{generalized rigged Hilbert space} 
if spaces $\Hs_{\mp,\pm}$ are dual to each other with respect to the $\Hs$-pairing $\<\cdot,\cdot\>_\Hs$ that is obtained from the inner product $(\cdot|\cdot)_\Hs$ of the \emph{pivot space} $\Hs$ as an extension by continuity to 
$\Hs_{\mp,\pm} \times \Hs_{\pm,\mp}$, see \cite[Appendix to IX.4, Ex.~3]{RSII} and \cite{S21,EK22}. This notion is convenient for the description of duality of trace spaces arising for various wave equations \cite{S21,EK22}. Note that a generalized rigged Hilbert space does not assume any of embeddings between spaces $\Hs_{\mp,\pm}$ and $\Hs$. 

\begin{rem}  \label{r:rHs}
A generalized rigged Hilbert space $(\Hs_+,\Hs,\Hs_-)$ with the additional assumption that continuous embeddings $\Hs_+ \imb \Hs \imb \Hs_-$ hold is a rigged Hilbert space
(or a Gelfand triple) in the standard sense. 
\end{rem}

\begin{defn}[m-boundary tuple, \cite{EK22}] \label{d:MBT}
Assume that auxiliary Hilbert spaces $\Hs$ and $\Hs_{\mp,\pm}$ form  a generalized rigged Hilbert space $(\Hs_{-,+},\Hs,\Hs_{+,-})$. 
The tuple $(\Hs_{-,+},\Hs, \Hs_{+,-}, \Ga_0,\Ga_1)$ is called an \emph{m-boundary tuple}  for the operator $\Ao^*$
if the following conditions hold:
\item[(i)] the map 
$\Ga : f \mapsto \{ \Ga_0 f , \Ga_1 f \} $ is a surjective linear operator from $\dom \Ao^*$ onto $\Hs_{-,+} \oplus \Hs_{+,-}$; 
\item[(ii)] 
\begin{equation} \label{e:AIP}
\text{$(\Ao^*f |g)_{\Xf} - (f |\Ao^*g)_{\Xf} =  \< \Ga_1 f , \Ga_0 g \>_\Hs  - \<\Ga_0 f , \Ga_1 g \>_\Hs $ for all $f,g \in \dom \Ao^*$.}
\end{equation}
\end{defn}

Note that  $\< \Ga_1 f , \Ga_0 g \>_\Hs$  in \eqref{e:AIP} is understood in the sense of the sesquilinear pairing 
$\< h_{+,-}  , h_{-,+}  \>_{\Hs}$ of $\Hs_{+,-}$ and $\Hs_{-,+}$ (with $h_{\pm,\mp} \in \Hs_{\pm,\mp}$) that is generated by the sesquiline\-ar inner
product $(\cdot|\cdot)_\Hs$ of the pivot space $\Hs$. We use the same notation 
$\< \cdot  , \cdot \>_{\Hs}$ for the sesquilinear pairing of   $\Hs_{-,+}$ with $\Hs_{+,-}$, which is the case of the term $\<\Ga_0 f , \Ga_1 g \>_\Hs$
in  \eqref{e:AIP}.
By $\Hs_{-,+} \oplus \Hs_{+,-}$, we denote the orthogonal sum of the spaces $\Hs_{\pm,\mp}$.

If additionally the continuous embeddings $\Hs_{-,+} \imb \Hs \imb \Hs_{+,-}$ of a rigged Hilbert space take place in Definition \ref{d:MBT},    
$(\Hs_{-,+},\Hs,\Hs_{+,-},\Ga_0,\Ga_1)$ is called a \emph{rigged boundary tuple} for  $\Ao^*$ \cite{K24} 
(cf. \cite[Section 6]{DM95} and \cite{BM14}, where somewhat similar constructions appeared in a less explicit form).
In the particular case of the trivial duality $\Hs_{-,+} = \Hs = \Hs_{+,-}$, an m-boundary tuple $(\Hs_{-,+},\Hs,\Hs_{+,-},\Ga_0,\Ga_1)$ becomes effectively a boundary triple $(\Hs,\Ga_0,\Ga_1)$ in the standard sense of \cite{K75,GG91} (see also \cite{DM95,P12,DM17,BHdS20}). 

\begin{defn}[boundary triple, \cite{K75,GG91}] \label{d:BT}
An auxiliary Hilbert space $\Hs$ and 
the maps $\wh \Ga_j: \dom \Ao^* \to \Hs$, $j=0,1$, form a \emph{boundary triple} $(\Hs, \wh \Ga_0, \wh \Ga_1)$ for $\Ao^*$ if  $\wh \Ga : f \mapsto \{ \wh \Ga_0 f ,\wh \Ga_1 f \} $  is a surjective linear operator from $\dom \Ao^*$ onto $\Hs \oplus \Hs$ and
\begin{gather*} \label{e:AIPBT}
\text{$(\Ao^*f|g)_\Xf - (f|\Ao^*g)_\Xf =  (\wh \Ga_1 f | \wh \Ga_0 g )_\Hs  - (\wh \Ga_0 f | \wh \Ga_1 g )_\Hs $ for all $f,g \in \dom \Ao^*$.   }
\end{gather*}
\end{defn}

The operators $\Ga_0$ and $\Ga_1$ in Definition \ref{d:MBT} are abstract analogues of traces (boundary maps) of the theory of PDOs, while \eqref{e:AIP} is the abstract integration by parts.
Similarly to the theory of boundary triples, it is easy to see \cite{EK22} that 
\begin{equation} \label{e:domA}
\dom \Ao = \{\psi \in \dom \Ao^* \ : \ 0 = \Ga_0 \psi = \Ga_1 \psi\} .
\end{equation}

Boundary triples were introduced  for the description of general boundary conditions in the cases of ordinary differential equations (ODEs) or system of ODEs (see \cite{K75,GG91,DM17} for the history of the problem and additional references). They do not fit exactly to the specifics of the integration by parts for PDOs since the same auxiliary space $\Hs$ is used as the target space for the abstract trace maps $\Ga_0$ and $\Ga_1$. In order to address this difficulty, a number of regularizations and generalizations of boundary triples were suggested \cite{GG91,DM95,BL07,P12,AGW14,BM14,KZ15,S21,BGM22,DHM22,EK22}. In particular, the abstract definition of m-boundary tuple was  introduced in \cite{EK22} and was applied there to the trace spaces of  Maxwell systems.

\subsection{Abstract impedance boundary conditions and m-dissipativity}
\label{s:AbsMdis}

In the rest of this section, we assume that   
\begin{multline}
 \text{$\Tf=(\Hs_{-,+},\Hs, \Hs_{+,-}, \Ga_0,\Ga_1)$ is an m-boundary tuple for the adjoint $\Ao^*$}\\
 \text{ to a closed densely defined symmetric operator $A$.}
\label{e:aBT}
\end{multline}
The existence of an m-boundary tuple is equivalent to the statement that 
the deficiency indices $\nf_\pm (A)$ of $A$ are equal to each other.
In the case of assumption \eqref{e:aBT}, $\dim \Hs = \dim \Hs_{\mp,\pm} = \nf_\pm (A)$, where 
$\dim \Hs$ denotes the dimensionality of a Hilbert space $\Hs$ \cite{GG91,EK22}.

A operator $Z: \dom Z \subseteq \Hs_{-,+} \to \Hs_{+,-} $ acting from the space $\Hs_{-,+}$ to the space $\Hs_{+,-}$ and having a domain $\dom Z \subseteq \Hs_{-,+} $ is called \emph{accretive} if $\re \<Z y , y \>_{\Hs} \ge 0$ for all $y \in \dom Z$ \cite{EK22}.
An accretive operator $Z: \dom Z \subseteq \Hs_{-,+} \to \Hs_{+,-} $ is called \emph{maximal accretive} if it has no proper accretive extensions 
$\wh Z:\dom \wh Z \subseteq \Hs_{-,+} \to \Hs_{+,-} $. Recall that an extension $ \wh Z$ of $Z$ is \emph{proper} if $\dom \wh Z \supsetneqq \dom Z$.
In the case where $\Hs_{-,+} = \Hs_{-,+} = \Hs$ and $ \< \cdot  , \cdot \>_{\Hs} = (\cdot|\cdot)_{\Hs}$, these definitions  become the standard definitions for accretive and maximally accretive operators in a Hilbert space $\Hs$, see \cite{P59,Kato}.

The following definition of an abstract impedance boundary condition can be seen as a generalization of \cite[Section 7.1]{EK22}.

\begin{defn} \label{d:Imp}
Assume that $Z:\dom Z \subseteq \Hs_{-,+} \to \Hs_{+,-}$ is an accretive operator. 
Let the operator $A_Z = A^*\uph_{\dom A_Z}$ be the restriction  of $A^*$ to 
the set $\dom A_Z$ consisting of all $y \in \dom A^*$ such that $\Ga_0 y \in \dom Z$ and 
\begin{gather} \label{e:IBC}
Z \Ga_0 y = \ii \Ga_1 y  .
\end{gather}
Then we say that the restriction $A_Z$ of $A^*$ is generated by an \emph{abstract impedance boundary condition}  \eqref{e:IBC}. The operator $Z$ is called an \emph{(abstract) impedance operator}
associated with $A_Z$. 
\end{defn}

This correspondence between $Z$ and $A_Z$  assumes that $A^*$ and the m-boundary tuple $\Tf$ for $A^*$ are already fixed. Note that \eqref{e:domA} implies that 
$A_Z$ is always an extension of $A$.

In the context of GIBCs for acoustic and Maxwell systems, impedance operators were discussed and studied in \cite{HJN05,ACL18,EK22,EK24,K24}.

A (linear) operator $T:\dom T \subseteq \Xf \to \Xf$ in a Hilbert space $\Xf$ is called 
\emph{dissipative} if $\im (Ty|y)_{\Xf} \le 0$ for all $y \in \dom T$.
An  operator $T$ is called  \emph{m-dissipative} if $\CC_+ := \{z \in \CC : \im z >0\}$ 
is a subset of its resolvent set $\rho (T)$
and 
$\| (T- z)^{-1}\| \le (\im z)^{-1}$ for all $z \in \CC_+ $, see, e.g., \cite{E12,EK22}.
The set of m-dissipative operators is a subset in the class of dissipative operators \cite{Kato}. 
An operator $T$ is called m-accretive if $(-\ii) T$ is m-dissipative.

In these settings a spectrum $\si (T)$ of an m-dissipative operator lies in the closure 
$\overline{\CC}_- $ of the lower complex half-plane $\CC_- := \{\la \in \CC : \im \la <0\}$,
and a spectrum of an m-accretive operator lies in $\overline{\CC}_\rr := \{\la \in \CC : \re \la \ge 0\}$.
Let us notice that there exists a variety of other  conventions \cite{P59,GG91,Kato,KZ15,S21}.

\begin{rem} \label{r:CritMdis}
It follows from the results of Phillips \cite{P59} that the following statements are equivalent for an operator $T:\dom T \subseteq \Xf \to \Xf$:
\begin{itemize}
\item[(i)] $T$ is m-accretive, 
\item[(ii)] $T$ is a closed maximal accretive operator, 
\item[(iii)] $T$ is a densely defined maximal accretive  operator.
\end{itemize}
Obviously, the analogous equivalences connect the notions of m-dissipative and maximal dissipative operators.
\end{rem}

Recall that an  operator $K$ belonging to the Banach space $\Lc (\Xf)$ of bounded operators in $\Xf$
is called a \emph{contraction} if $\| K \| \le 1$.

\begin{rem} \label{r:Gen}
The importance of the class of m-dissipative operators in the context of wave equations is that 
they describe dissipative resonators. This corresponds to the following  criterion of Phillips:
an operator $T$ is m-dissipative if and only if $(-\ii) T$ is a generator of a contraction semigroup  \cite{P59,Kato,E12}. 
\end{rem}

Assume that an operator $T:\dom T \subseteq \Hs_{-,+} \to \Hs_{+,-}$ has a domain $\dom T$ dense in $\Hs_{-,+}$. Then the operator $T^\cross:\dom T^\cross \subseteq \Hs_{-,+} \to \Hs_{+,-}$ is defined as the adjoint to $T$ 
w.r.t. the $\Hs$-pairing $\<\cdot,\cdot\>_\Hs$ (see Section \ref{s:BT}). We  say for brevity that  $T^\cross$ is the \emph{$\cross$-adjoint operator to $T$}.
This definition of the $\cross$-adjoint means that its graph $\Gr T^\cross = \{\{f,T^\cross f\} : f \in \dom T^\cross\}$ 
consist of all $\{g_{-,+},g_{+,-}\} \in \Hs_{-,+} \oplus \Hs_{+,-}$ such that $\<h_{+,-} | g _{-,+}\>_\Hs  =  \<h_{-,+} | g_{+,-}\>_\Hs$ for all  $\{ h_{-,+} , h_{+,-} \} \in \Gr T$.
Consequently, the operator $T^\cross$ is closed.
If $T=T^\cross$, we say that $T$ is $\cross$-selfadjoint operator.

In the case of the trivial duality $\Hs_{\mp,\pm} = \Hs$, the  definition of $T^\cross$ becomes the standard definitions for the adjoint operator $T^*$ in $\Hs$. 
Note that, in the case $\Hs_{\mp,\pm} \neq \Hs$ and $T:\dom T \subseteq \Hs_{-,+} \to \Hs_{+,-}$, the adjoint operator $T^*: \dom T^* \subseteq \Hs_{+,-} \to \Hs_{-,+}$ is  formally different from the $\cross$-adjoint  $T^\cross:\dom T^\cross \subseteq \Hs_{-,+} \to \Hs_{+,-}$, but there is a 1-to-1 correspondence between these types of adjoint operators that follows from the standard identifications of bounded linear functionals on $\Hs_{\mp,\pm}$.

If an operator $T:\dom T \subseteq \Hs_{-,+} \to \Hs_{+,-}$ is densely defined in $\Hs_{-,+}$ and
is  closable, then its closure satisfies $\overline{V} = (V^\cross)^\cross$.

The assumption of accretivity  imposed on all impedance operators  $Z$ in Definition \ref{d:Imp} 
guaranties that the associated restriction $A_Z$ (of $A^*$) is a dissipative operator in $\Xf$.
This follows directly from  the abstract integration by parts \eqref{e:AIP}.
The accretivity of $Z$  is not sufficient to guarantee the m-dissipativity of $A_Z$.  
The corresponding criterion of the m-dissipativity  of $A_Z$ is provided by Proposition \ref{p:m-dis} below,
which is a generalization of \cite[Corollary 2.3]{EK22},  \cite[Corollary 7.2]{EK22}, and \cite[Theorem 2.3]{K24}.

\begin{prop} \label{p:m-dis} 
In the settings of Definition \ref{d:Imp}, the following sta\-tements are equivalent:
\begin{itemize}
\item[(i)] The operator $A_Z$ is m-dissipative in $\Xf$.
\item[(ii)] The impedance operator $Z$ is closed and maximal accretive.
\item[(iii)] The impedance operator $Z$ is densely defined (in $\Hs_{-,+}$) and maximal accretive.
\end{itemize}
Moreover, $A_Z = A_Z^*$ if and only if $Z^\cross = - Z$.
\end{prop}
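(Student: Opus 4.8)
The plan is to reduce the generalized rigged structure to an ordinary boundary triple and then to invoke the Phillips criterion of Remark~\ref{r:CritMdis} together with the standard order-correspondence between proper extensions of $A$ and boundary relations. First I would record the elementary half of the argument: inserting the boundary condition \eqref{e:IBC} into the abstract Green identity \eqref{e:AIP} and using the conjugate symmetry of the two $\Hs$-pairings gives $\im (A_Z y \,|\, y)_\Xf = - \re \< Z\Ga_0 y , \Ga_0 y\>_\Hs \le 0$ for every $y \in \dom A_Z$, so accretivity of $Z$ already forces $A_Z$ to be dissipative. Hence in each of (i)--(iii) the dissipativity/accretivity is automatic, and the real content is the equivalence of the three maximality (and closedness/dense-definedness) conditions.

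Next I would trivialize the rigging. Since $(\Hs_{-,+},\Hs,\Hs_{+,-})$ is a generalized rigged Hilbert space, the $\Hs$-pairing identifies $\Hs_{+,-}$ with the antidual of $\Hs_{-,+}$, so Riesz representation supplies an isometric isomorphism $\Phi \in \Hom(\Hs_{+,-},\Hs_{-,+})$ with $\< h , g \>_\Hs = (\Phi h \,|\, g)_{\Hs_{-,+}}$ for $h \in \Hs_{+,-}$, $g \in \Hs_{-,+}$. Setting $\Gc := \Hs_{-,+}$, $\wh\Ga_0 := \Ga_0$, and $\wh\Ga_1 := \Phi \Ga_1$, a direct check using the two $\Hs$-pairings turns \eqref{e:AIP} into the Green identity of Definition~\ref{d:BT}, so $(\Gc,\wh\Ga_0,\wh\Ga_1)$ is a boundary triple for $A^*$. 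The operator $\wh Z := \Phi Z$ then has $\dom \wh Z = \dom Z$, the condition \eqref{e:IBC} becomes $\wh Z \wh\Ga_0 y = \ii \wh\Ga_1 y$, and $A_{\wh Z} = A_Z$. Because $\Phi$ is an isometric homeomorphism, $\wh Z$ is accretive / closed / densely defined / maximal accretive in $\Gc$ exactly when $Z$ has the corresponding property relative to the $\Hs$-pairing, and $\wh Z^* = -\wh Z$ is equivalent to $Z^\cross = -Z$. This reduces the whole proposition to the case of an ordinary boundary triple.

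It remains to prove the boundary-triple statement for $A_{\wh Z} = A^*\uph\{y : \wh Z\wh\Ga_0 y = \ii\wh\Ga_1 y\}$, essentially the content of \cite{EK22,K24}, which I would argue as follows. The equivalence (ii)$\iff$(iii) is immediate from Phillips (Remark~\ref{r:CritMdis}) applied to $\wh Z$ itself, since both assert that $\wh Z$ is maximal accretive. For (i)$\iff$(ii) I would again use Phillips to replace m-dissipativity of $A_{\wh Z}$ by maximal dissipativity, and then the order-preserving correspondence $\Theta \mapsto A_\Theta := A^*\uph\{y:\{\wh\Ga_0 y,\wh\Ga_1 y\}\in\Theta\}$ between linear relations $\Theta$ in $\Gc$ and proper extensions of $A$: surjectivity of $\wh\Ga$ together with the description \eqref{e:domA} of $\dom A$ gives $A_\Theta\subseteq A_{\Theta'}\iff \Theta\subseteq\Theta'$. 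With $\Theta = \Gr(-\ii\wh Z)$ one has $A_{\wh Z}=A_\Theta$, the first-paragraph computation shows $\Theta$ is a dissipative relation iff $\wh Z$ is accretive, and the order-isomorphism then yields $A_{\wh Z}$ maximal dissipative $\iff$ $\Theta$ maximal dissipative $\iff$ $\wh Z$ maximal accretive, which by Phillips is (ii). Finally $A_{\wh Z}^* = A_{\Theta^*}$ with $\Theta^*$ the adjoint relation, so $A_{\wh Z}=A_{\wh Z}^*$ iff $\Theta=\Theta^*$ iff $(-\ii\wh Z)^*=-\ii\wh Z$, i.e. $\wh Z^*=-\wh Z$, transporting back to $Z^\cross=-Z$.

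The main obstacle is the maximality transfer. In one direction I must show that a maximal dissipative $A_{\wh Z}$ forces $\Theta=\Gr(-\ii\wh Z)$ to be a maximal dissipative relation, i.e. that \emph{every} maximal dissipative extension of $A$ is a proper extension and hence of the form $A_\Theta$ (so that no dissipative extension escapes the correspondence). In the converse direction the genuine analysis is the surjectivity $\ran(A_{\wh Z}-z)=\Xf$ for some $z\in\CC_+$: this I would extract from the Krein-type resolvent formula of the boundary triple, reducing $(A_{\wh Z}-z)y=f$ to invertibility of $\wh Z-\ii M(z)$ on $\Gc$ (with $M$ the Weyl function) and then using the sign of $\im M(z)$ against the m-accretivity of $\wh Z$. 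A secondary point requiring care is the sesquilinearity bookkeeping at the reduction step, ensuring that $\Phi$ intertwines the $\cross$-adjoint with the ordinary Hilbert-space adjoint, which is exactly what makes the self-adjointness criterion come out as $Z^\cross=-Z$.
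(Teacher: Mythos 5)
Your proposal is correct, and both obstacles you flag are real but standard; the route you take, however, differs from the paper's in its key machinery. The paper proves the proposition in one sentence, by Remark \ref{r:CritMdis} plus the argument of \cite[Corollary 7.2]{EK22}, and that argument's method is displayed inside this paper in Section \ref{s:PabsCR}: reduce to an ordinary boundary triple via some $V\in\Hom(\Hs,\Hs_{-,+})$ (Proposition \ref{p:BT}), then invoke the Kochubei-type parametrization of \emph{all} m-dissipative restrictions of $A^*$ by contractions (Proposition \ref{p:absM-dis}), identifying the impedance condition $Z\Ga_0 y=\ii\Ga_1 y$ with condition \eqref{e:K+IGa} for $K=\Cc_{Z_V}$ via the Cayley transform. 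You share the reduction step --- your Riesz-map triple with boundary space $\Hs_{-,+}$ is a cosmetic variant of Proposition \ref{p:BT}, and the intertwining $\wh Z^{*}=\Phi Z^{\cross}$ is exactly the right bookkeeping for the selfadjointness criterion --- but you bypass the contraction/Cayley parametrization entirely, replacing it by the order isomorphism between linear relations in the boundary space and restrictions $A\subseteq T\subseteq A^{*}$, Phillips' criterion on both sides, the classical fact that every dissipative extension of a densely defined symmetric operator is contained in $A^{*}$ (provable by polarization), and, for the range condition, the Krein formula with the Weyl function, where $\im M(z)=(\im z)\,\gamma(z)^{*}\gamma(z)\ge c\,I$ makes $\wh Z-\ii M(z)$ an m-accretive operator plus a bounded uniformly accretive perturbation, hence boundedly invertible. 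The paper's route buys brevity, since Proposition \ref{p:absM-dis} is quoted ready-made and the same Cayley computation is reused in the proof of Theorem \ref{t:absCR}; yours buys a self-contained proof within classical boundary-triple extension theory, at the cost of establishing the transfer lemmas yourself. One point your sketch glosses over and should be repaired if you use the order-isomorphism for the direction (ii)$\Rightarrow$(i): maximality of $\wh Z$ among accretive \emph{operators} must be promoted to maximality of $\Gr(-\ii\wh Z)$ among dissipative \emph{relations} (multivalued extensions must be excluded); this follows from m-accretivity, since $\ran(\wh Z+I)=\Hs_{-,+}$ forces any proper relation extension to contain a pair $\{v,-v\}$ with $v\ne 0$, contradicting accretivity. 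Note also that your ``proper extension'' (meaning $A\subseteq T\subseteq A^{*}$) clashes with this paper's use of ``proper'' for strict domain inclusion.
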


\begin{proof} The proof can be obtained from Remark \ref{r:CritMdis} and the theory of boundary tuples in the way completely analogous to the proof of
\cite[Corollary 7.2]{EK22}, which gives a similar result for the particular case of abstract Maxwell operators.
\end{proof}

\subsection{Compactness for  resolvents and for impedance operators}
\label{s:CompResAbs}

An eigenvalue $\la$ of $T: \dom T \subseteq \Xf \to \Xf$ is called \emph{isolated} if $\la$ is an isolated point of the spectrum $\si (T)$ of $T$. The \emph{discrete spectrum} $\si_\disc (T)$ of $T$ is the set of isolated eigenvalues of $T$ with finite algebraic multiplicities \cite{Kato,RSIV}. We say that $T$ has  \emph{purely discrete spectrum} if $\si(T) = \si_\disc (T)$. The closed set $\si_\ess (T) = \si (T) \setminus \si_\disc (T)$ is called an \emph{essential spectrum} of $T$ \cite{RSIV}.

Assume that $\la_0$ belongs to the resolvent set $\rho (T)$ of $T$ and the resolvent $(T-\la_0 )^{-1} = (T-\la_0 I)^{-1}$ at $\la_0$ is a compact operator (for brevity, we sometimes omit the identity operator $I$ from resolvent-type notations). Then the resolvent $(T-\la)^{-1}$ is compact for every $\la \in \rho (T)$; in this case, it is said that $T$ is an \emph{operator with compact resolvent}. An operator $T$ with compact resolvent has purely discrete spectrum, see, e.g., \cite{Kato}. 

Definition \ref{d:MBT} implies that, for $j=0,1$, the restrictions 
\begin{gather} \label{e:A01}
\wh A_j = A^* \uph_{\ker \Ga_j}  \text{are selfadjoint extensions of $A$.} 
\end{gather}

The following theorem is our main technical tool for the subsequent sections.

\begin{thm}\label{t:absCR}
Assume that $\wh A_1$ has a compact resolvent.
Let an m-dissipative restriction $A_Z$ of $A^*$ be generated by an abstract impedance boundary condition $ Z \Ga_0 y = \ii \Ga_1 y $ (see Proposition \ref{p:m-dis}).
Then the following statements are equivalent: 
\begin{itemize}
\item[(i)]  $A_Z$ has a compact resolvent;
\item[(ii)] the impedance operator $Z$ is compact (as an operator from $\Hs_{-,+}$ to $ \Hs_{+,-}$).
\end{itemize} 
\end{thm}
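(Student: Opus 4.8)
The plan is to base every construction on the selfadjoint extension $\wh A_1 = A^*\uph_{\ker \Ga_1}$, whose resolvent is compact by hypothesis, and to shuttle compactness between the resolvent of $A_Z$ and the impedance operator $Z$ through the abstract traces. I fix $\la \in \CC_+$, so that $\la \in \rho(\wh A_1)$ (as $\wh A_1 = \wh A_1^*$) and, whenever $A_Z$ is m-dissipative, $\la \in \rho(A_Z)$. Writing $\Nf_\la = \ker(A^*-\la)$, I first record the standard facts: $\Ga_1\uph_{\Nf_\la} : \Nf_\la \to \Hs_{+,-}$ is a homeomorphism (injective because $\la \notin \si(\wh A_1)$, surjective by surjectivity of $\Ga = (\Ga_0,\Ga_1)$ after correcting by an element of $\dom \wh A_1$); its inverse is the gamma field $\ga(\la) \in \Lc(\Hs_{+,-},\Xf)$, and $N(\la) := \Ga_0 \ga(\la) \in \Lc(\Hs_{+,-},\Hs_{-,+})$ is the associated Weyl function. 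I also set $\beta(\la) := \Ga_0 (\wh A_1-\la)^{-1} \in \Lc(\Xf,\Hs_{-,+})$ and note it is \emph{surjective}, since $(\wh A_1-\la)^{-1}$ maps $\Xf$ onto $\dom \wh A_1 = \ker \Ga_1$ while $\Ga_0(\ker \Ga_1) = \Hs_{-,+}$ (again by surjectivity of $\Ga$). Here I use that the traces are continuous in the graph norm, a standard property of m-boundary tuples.

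The computational backbone consists of three identities, valid for $\la \in \rho(\wh A_1)\cap\rho(A_Z)$. With $P(\la) := \Ga_0(A_Z-\la)^{-1}$ and $Q(\la):=\Ga_1(A_Z-\la)^{-1}$, both bounded on $\Xf$ by graph-norm continuity of the traces, the difference $(A_Z-\la)^{-1}-(\wh A_1-\la)^{-1}$ takes values in $\Nf_\la$ and has $\Ga_1$-trace $Q(\la)$; hence (I1) $(A_Z-\la)^{-1}-(\wh A_1-\la)^{-1} = \ga(\la)Q(\la)$. Applying $\Ga_0$ gives (I2) $P(\la) = \beta(\la) + N(\la)Q(\la)$. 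Finally, the boundary condition $Z\Ga_0 y = \ii \Ga_1 y$ on $\dom A_Z$, together with $\ran P(\la) = \dom Z$ (from surjectivity of $\Ga$), yields (I3) $ZP(\la) = \ii Q(\la)$.

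For (ii)$\Rightarrow$(i) these identities suffice almost at once. A compact accretive operator is bounded and everywhere defined, hence maximal accretive and closed, so $A_Z$ is m-dissipative by Proposition \ref{p:m-dis} and $\la \in \rho(A_Z)$. By (I3), $Q(\la) = -\ii Z P(\la)$ is compact (compact $Z$ times bounded $P(\la)$), and then (I1) exhibits $(A_Z-\la)^{-1}$ as the sum of the compact $(\wh A_1-\la)^{-1}$ and the compact $\ga(\la)Q(\la)$. The point is that the generally noncompact gamma field enters only as a bounded left factor, so the compactness injected by $Z$ survives.

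The implication (i)$\Rightarrow$(ii) is the hard direction and carries the main obstacle: the gamma field is neither compact nor boundedly invertible, so one cannot cancel it to read off $Z$ from the compact resolvent difference. I would argue in three steps. First, since both resolvents are compact, (I1) shows $\ga(\la)Q(\la)$ is compact; as its range lies in the closed subspace $\Nf_\la$ on which $\Ga_1\uph_{\Nf_\la} = \ga(\la)^{-1}$ is bounded, composing on the left gives that $Q(\la)$ itself is compact. Second, and this is the crux, I must upgrade the a priori only densely defined $Z$ to a bounded everywhere-defined operator. By (I2), $P(\la) = \beta(\la) + N(\la)Q(\la)$ is a compact perturbation of the surjective $\beta(\la)$; choosing a bounded right inverse $S$ of $\beta(\la)$ gives $P(\la)S = I + N(\la)Q(\la)S$ with $N(\la)Q(\la)S$ compact, so $P(\la)S$ is Fredholm of index $0$ and $\ran P(\la) \supseteq \ran(P(\la)S)$ is closed of finite codimension. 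Thus $\dom Z = \ran P(\la)$ is closed and finite-codimensional, and being dense by Proposition \ref{p:m-dis} it must equal $\Hs_{-,+}$. Third, $P(\la):\Xf\to\Hs_{-,+}$ is now surjective, hence admits a bounded right inverse $T$, and (I3) gives $Z = Z P(\la) T = \ii Q(\la) T$, the composition of the compact $Q(\la)$ with the bounded $T$; therefore $Z$ is compact, and its accretivity is again furnished by Proposition \ref{p:m-dis}. The delicate feature is that the decoupling of $Z$ from the Weyl function $N(\la)$ is achieved not algebraically but through the range/Fredholm argument forcing $\dom Z = \Hs_{-,+}$, after which boundedness of $Z$ follows and the compactness of $Q(\la)$ propagates to $Z$.
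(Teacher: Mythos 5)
Your proof is correct, but it takes a genuinely different route from the paper's. The paper never constructs a gamma field or Weyl function: it reduces the m-boundary tuple to an ordinary boundary triple via a homeomorphism $V \in \Hom(\Hs,\Hs_{-,+})$ (Proposition \ref{p:BT}), invokes the Kochubei-type parametrization of all m-dissipative restrictions of $A^*$ by contractions $K$ in $\Hs$ (Proposition \ref{p:absM-dis}), identifies the contraction attached to $A_Z$ as the Cayley transform $\Cc_{Z_V}$ of $Z_V = V^\cross Z V$ and the one attached to $\wh A_1$ as $K_1=-I$, and then applies the Krein-type $\Sf_p$-compatibility result (Proposition \ref{p:R-RinS}, from \cite{GG91}) together with Lemma \ref{l:SinfEq}, which translates compactness of $\Cc_{Z_V}+I$ into compactness of $Z_V$, hence of $Z$. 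You instead build the resolvent identity (I1) and the trace identities (I2)--(I3) by hand on the m-boundary tuple itself, with the defect space $\Nf_\la$ parametrized by the $\Ga_1$-trace, and in the hard direction you force $\dom Z = \ran P(\la)$ to be closed of finite codimension via the index-zero operator $P(\la)S = I + N(\la)Q(\la)S$, whence $\dom Z = \Hs_{-,+}$ by density (Proposition \ref{p:m-dis}) and $Z = \ii\, Q(\la)T$ is compact. In effect you reprove, self-containedly and in the special case $\Sf_\infty$, $K_1 = -I$, the content of Proposition \ref{p:R-RinS}, where the paper simply cites \cite{GG91}; conversely, the paper's route gives the full Schatten-class refinement at no extra cost, and it handles the possible unboundedness of $Z$ more cheaply (a Fredholm-alternative step inside Lemma \ref{l:SinfEq}: $2I - (\Cc_{Z_V}+I)$ is injective, hence invertible) where you need the index/codimension argument. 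One point you should make explicit: the graph-norm continuity of $\Ga_0,\Ga_1$ --- on which the boundedness of $P(\la)$, $Q(\la)$, $\beta(\la)$, $\ga(\la)$, $N(\la)$ and the homeomorphism property of $\Ga_1\!\uph_{\Nf_\la}$ all rest --- is not part of Definition \ref{d:MBT}; it is true, but must itself be imported from boundary-triple theory through the reduction of Proposition \ref{p:BT} (or cited from \cite{EK22}), so your argument implicitly relies on the same reduction the paper uses openly.
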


The proof is given in Subsection \ref{s:PabsCR}. It is based on the reduction to the case of 
a boundary triple, where a variety of compatibility results for the compactness properties are known, see \cite[Theorems 3.1 and 3.4]{GG91} and 
\cite[Theorem 7.106 and Corollary 7.107]{DM17}.

If $T$ is a selfadjoint or normal operator, then
$\si(T) = \si_\disc (T)$ is equivalent to the compactness of the resolvent of $T$.

We see now that Theorem \ref{t:absCR} and Proposition \ref{p:m-dis} imply immediately the following result on the discreteness of the spectrum of $A_Z$.

\begin{cor} \label{c:absSiDisc}
Assume that $\wh A_1$ has a compact resolvent.
Let $A_Z$ be the restriction of $A^*$ associated with an impedance operator $Z$ as before. Then the following statements hold true: 
\begin{itemize}
\item[(i)]  If $Z$ is compact, then $\si(A_Z) = \si_\disc (A_Z)$. 
\item[(ii)] If $Z^\cross = - Z$, then the compactness of $Z$ is equivalent to $\si(A_Z) = \si_\disc (A_Z)$.
\end{itemize} 
\end{cor}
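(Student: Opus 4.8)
The plan is to read off both assertions directly from Theorem~\ref{t:absCR} and Proposition~\ref{p:m-dis}; since the corollary only assumes that $\wh A_1$ has a compact resolvent and that $Z$ is an impedance operator (hence merely accretive), the one point demanding attention is to certify in each case that $A_Z$ is m-dissipative, this being a standing hypothesis of Theorem~\ref{t:absCR}.

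For statement (i) I would first observe that a compact impedance operator $Z:\Hs_{-,+}\to\Hs_{+,-}$ is, by our convention, bounded and defined on all of $\Hs_{-,+}$, hence closed; as an everywhere-defined operator it admits no proper extension, so its accretivity (part of the notion of an impedance operator, Definition~\ref{d:Imp}) is automatically maximal. Proposition~\ref{p:m-dis} then guarantees that $A_Z$ is m-dissipative, so Theorem~\ref{t:absCR} applies under the hypothesis that $\wh A_1$ has a compact resolvent; as $Z$ is compact, it yields that $A_Z$ has a compact resolvent. An operator with compact resolvent has purely discrete spectrum, whence $\si(A_Z)=\si_\disc(A_Z)$.

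For statement (ii) I would start from $Z^\cross=-Z$ and invoke the last clause of Proposition~\ref{p:m-dis} to conclude $A_Z=A_Z^*$, i.e.\ that $A_Z$ is selfadjoint (one may check consistency by pairing $Zy$ with $y$ and using the definition of $Z^\cross$, which forces $\re\<Zy,y\>_\Hs=0$, so that $Z=-Z^\cross$ is closed, accretive, and maximal accretive). A selfadjoint operator is m-dissipative in the present convention, since its spectrum is real, so that $\CC_+\subseteq\rho(A_Z)$ with $\|(A_Z-z)^{-1}\|\le(\im z)^{-1}$ for $z\in\CC_+$; hence Theorem~\ref{t:absCR} is again available and gives that $Z$ is compact if and only if $A_Z$ has a compact resolvent. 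Because $A_Z$ is selfadjoint, and in particular normal, the criterion recalled above asserts that $A_Z$ has a compact resolvent if and only if $\si(A_Z)=\si_\disc(A_Z)$. Chaining these two equivalences proves (ii).

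The step I expect to be the only genuine obstacle is the verification that $A_Z$ is m-dissipative, which is what feeds the hypotheses of Theorem~\ref{t:absCR}: in (i) this rests on the elementary but essential remark that compactness forces boundedness and a full domain (hence maximal accretivity via Proposition~\ref{p:m-dis}), while in (ii) it rests on the passage $Z^\cross=-Z\Rightarrow A_Z=A_Z^*\Rightarrow A_Z$ m-dissipative. Once m-dissipativity is secured, both parts reduce to a formal combination of Theorem~\ref{t:absCR}, Proposition~\ref{p:m-dis}, and the spectral dichotomy for operators with compact resolvent, respectively for normal operators.
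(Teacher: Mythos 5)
Your proof is correct and follows essentially the same route as the paper: the paper derives this corollary directly from Theorem~\ref{t:absCR} and Proposition~\ref{p:m-dis} together with the facts that compact resolvent implies purely discrete spectrum and that, for selfadjoint (normal) operators, purely discrete spectrum is equivalent to resolvent compactness. Your only addition is to spell out the m-dissipativity verification (everywhere-defined accretive $\Rightarrow$ maximal accretive in case (i); $Z^\cross=-Z \Rightarrow A_Z=A_Z^*$ $\Rightarrow$ m-dissipative in case (ii)), which is exactly what the paper's citation of Proposition~\ref{p:m-dis} is meant to supply.
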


\subsection{Compatibility results and proof of Theorem \ref{t:absCR}}
\label{s:PabsCR}

We assume \eqref{e:aBT} in this subsection.
Every m-boundary tuple can be reduced to a boundary triple in a number of nonequivalent ways. Roughly speaking, each choice of orthogonal coordinates in $\Hs$ produces its own reduction.
This can be seen from Proposition \ref{p:BT} below, which a modification of \cite[Proposition 6.1]{EK22}. Proposition \ref{p:BT} requires an extension of a notion of $\cross$-adjoint operator.

The definition of  $\Hs$-pairing-adjoint operators $T^\cross$ of Section \ref{s:AbsMdis} can be naturally extended \cite{EK22} to operators acting between spaces $\Hs$ and $\Hs_{\mp,\pm}$. For example, for an operator $V$ belonging to the Banach space $\Lc (\Hs, \Hs_{-,+})$ of bounded operators from $\Hs$ to $\Hs_{-,+}$, there exists a unique $\cross$-adjoint operator $V^\cross \in 
\Lc (\Hs_{+,-}, \Hs)$ such that 
\begin{equation} \label{e:HsAd}
\text{$\< V f,g\>_\Hs = (  f | V^\cross g)_\Hs$   for all $ f \in \Hs$ and $g \in \Hs_{+,-}$.}
\end{equation}

Let us denote by $\Hom (\Xf_1,\Xf_2)$ a set of linear homeomorphisms from 
$\Xf_1$ to $\Xf_2$. Then, for $V \in \Hom (\Hs,\Hs_{-,+})$, one has $V^\cross \in \Hom (\Hs_{+,-}, \Hs)$ and $(V^\cross)^{-1} = (V^{-1})^\cross$.

In what follows, we fix a certain $V \in \Hom (\Hs, \Hs_{-,+})$.

\begin{prop}[\cite{EK22}]\label{p:BT}
 Let $(\Hs_{-,+},\Hs,\Hs_{+,-}\Ga_0,\Ga_1)$ be an m-boundary tuple for $A^*$. Then
 \[
\text{$ \Tf^V = (\Hs, \Ga_0^V, \Ga_1^V) := (\Hs, V^{-1} \Ga_0, V^\cross \Ga_1) $ 
\quad and \quad $\Tf^V_* = (\Hs, \ii \Ga_1^V,  (-\ii) \Ga_0^V)$}
\]
 are  boundary triples for $A^*$. We say that the boundary triples $\Tf^V$ and $\Tf^V_*$
 are dual to each other.
\end{prop}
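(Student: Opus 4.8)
The plan is to check, for each of the two tuples, the two defining requirements of a boundary triple from Definition \ref{d:BT}: that the combined boundary map is a surjection onto $\Hs \oplus \Hs$, and that the abstract Green identity holds in its inner-product form. Since $V \in \Hom(\Hs,\Hs_{-,+})$ and, by the observation preceding the statement, $V^\cross \in \Hom(\Hs_{+,-},\Hs)$, both $\Ga_0^V = V^{-1}\Ga_0$ and $\Ga_1^V = V^\cross \Ga_1$ indeed map $\dom A^*$ into the single pivot space $\Hs$, as a boundary triple requires. For surjectivity I would observe that $\{a,b\} \mapsto \{V^{-1}a, V^\cross b\}$ is a linear homeomorphism of $\Hs_{-,+} \oplus \Hs_{+,-}$ onto $\Hs \oplus \Hs$; composing it with the surjection $\Ga = \{\Ga_0,\Ga_1\}$ guaranteed by Definition \ref{d:MBT}(i) shows that $\{\Ga_0^V, \Ga_1^V\}$ maps $\dom A^*$ onto $\Hs \oplus \Hs$.

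The heart of the matter is to convert the pairing form of the abstract integration by parts \eqref{e:AIP} into the inner-product form of Definition \ref{d:BT}, and I expect this to be the only delicate point. I would expand the candidate right-hand side $(\Ga_1^V f | \Ga_0^V g)_\Hs - (\Ga_0^V f | \Ga_1^V g)_\Hs = (V^\cross \Ga_1 f | V^{-1}\Ga_0 g)_\Hs - (V^{-1}\Ga_0 f | V^\cross\Ga_1 g)_\Hs$ and reduce each term to an $\Hs$-pairing by means of the defining relation \eqref{e:HsAd} of the $\cross$-adjoint. Taking $a = V^{-1}\Ga_0 f$ and $b = \Ga_1 g$ in \eqref{e:HsAd} yields the second term directly as $\<\Ga_0 f, \Ga_1 g\>_\Hs$. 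For the first term I would instead take $a = V^{-1}\Ga_0 g$ and $b = \Ga_1 f$, obtaining $(V^{-1}\Ga_0 g | V^\cross \Ga_1 f)_\Hs = \<\Ga_0 g, \Ga_1 f\>_\Hs$, and then pass to complex conjugates; the Hermitian symmetries $\overline{(x|y)_\Hs} = (y|x)_\Hs$ and $\overline{\<x,y\>_\Hs} = \<y,x\>_\Hs$ (the latter holding by continuous extension from the pivot space) turn this into $(V^\cross \Ga_1 f | V^{-1}\Ga_0 g)_\Hs = \<\Ga_1 f, \Ga_0 g\>_\Hs$. Hence the right-hand side equals $\<\Ga_1 f, \Ga_0 g\>_\Hs - \<\Ga_0 f, \Ga_1 g\>_\Hs$, which by \eqref{e:AIP} is $(A^*f|g)_\Xf - (f|A^*g)_\Xf$. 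The main obstacle here is purely bookkeeping: keeping straight the two directions of the pairing $\<\cdot,\cdot\>_\Hs$ and the conjugations introduced by the $\cross$-adjoint.

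For $\Tf^V_* = (\Hs, \ii\Ga_1^V, (-\ii)\Ga_0^V)$ I would argue that it is a symplectic rotation of $\Tf^V$ and therefore automatically a boundary triple. Surjectivity is immediate, since $\{a,b\}\mapsto\{\ii b, -\ii a\}$ is a bijection of $\Hs\oplus\Hs$ composed with the surjection just established. For the Green identity, setting $\Ga_0' = \ii\Ga_1^V$, $\Ga_1' = -\ii\Ga_0^V$ and expanding $(\Ga_1' f | \Ga_0' g)_\Hs - (\Ga_0' f | \Ga_1' g)_\Hs$, the scalar factors $\ii$ and $-\ii$ combine (independently of the chosen convention for sesquilinearity) to reproduce exactly $(\Ga_1^V f | \Ga_0^V g)_\Hs - (\Ga_0^V f | \Ga_1^V g)_\Hs$, which was already shown to equal $(A^*f|g)_\Xf - (f|A^*g)_\Xf$. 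No further argument is needed for the last sentence of the statement, since \emph{dual to each other} is introduced there merely as terminology for the pair $(\Tf^V, \Tf^V_*)$ rather than as an additional claim to be proved.
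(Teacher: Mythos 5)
Your proof is correct. Note that the paper itself gives no proof of Proposition \ref{p:BT} — it is imported from \cite{EK22} as a modification of a result there — so your direct verification of the two axioms of Definition \ref{d:BT} supplies exactly what the citation leaves implicit, and it is the natural (essentially the only) route: surjectivity by composing $\Ga$ with the homeomorphism $\{a,b\} \mapsto \{V^{-1}a, V^\cross b\}$ of $\Hs_{-,+}\oplus\Hs_{+,-}$ onto $\Hs\oplus\Hs$, and the Green identity by converting \eqref{e:AIP} into inner-product form via \eqref{e:HsAd}, where your handling of the delicate first term (applying \eqref{e:HsAd} with the roles of $f$ and $g$ swapped and then using the conjugation symmetry of the $\Hs$-pairings) and your convention-independent cancellation of the factors $\pm\ii$ for $\Tf^V_*$ are both right.
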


The following result, which describes all m-dissipative extensions of the symmetric operator $A$, is essentially a part of \cite[Corollary 6.6]{EK22} up to minor reformulations (it can be obtained as a combination of Proposition \ref{p:BT} with the Kochubei description  \cite{K75} of maximal dissipative linear relations in $\Hs$).

\begin{prop}[\cite{EK22}] \label{p:absM-dis} 
The following statements are equivalent:
\begin{itemize}
\item[(i)] $\wh A$ is an m-dissipative extension of $A$;
\item[(ii)] there exists a contraction $K$ in $\Hs$ such that $\wh A$ is the restriction of $A^*$ that is generated by  the `abstract boundary condition' 
 \begin{gather} \label{e:K+IGa}
(K+I_\Hs) V^{-1} \Ga_0 y + \ii (K-I_\Hs) V^\cross  \Ga_1 y = 0 
\end{gather}
in the sense that $\dom \wh A = \{y \in \dom A^* : \text{ \eqref{e:K+IGa} is satisfied }\}$;
\item[(iii)] $\wh A$ is an m-dissipative restriction of $A^*$.
\end{itemize}
This equivalence establishes a 1-to-1 correspondence between m-dissipative restrictions $\wh A$ of $A^*$ and contractions $K$ in $\Hs$. Besides, $\wh A = \wh A^*$ if and only if $K$ is unitary.
\end{prop}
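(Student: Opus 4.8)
The plan is to transport the problem to the boundary-triple setting of Proposition~\ref{p:BT} and then invoke Kochubei's parametrization of maximal dissipative linear relations. Fix $V\in\Hom(\Hs,\Hs_{-,+})$ and pass to the boundary triple $\Tf^V=(\Hs,\Ga_0^V,\Ga_1^V)$ with $\Ga_0^V=V^{-1}\Ga_0$ and $\Ga_1^V=V^\cross\Ga_1$. I would first record the equivalence (i)$\Leftrightarrow$(iii): by \eqref{e:domA} every restriction of $A^*$ extends $A$ (on $\dom A=\ker\Ga_0\cap\ker\Ga_1$ any homogeneous boundary condition holds trivially), so (iii)$\Rightarrow$(i); conversely, it is classical \cite{GG91} that every m-dissipative extension of the closed symmetric operator $A$ is a restriction of $A^*$, which gives (i)$\Rightarrow$(iii). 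Thus it remains to characterize the m-dissipative restrictions of $A^*$ by \eqref{e:K+IGa}.

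Next I would use the standard bijection, built into $\Tf^V$, between restrictions $\wh A$ of $A^*$ and linear relations $\Theta\subseteq\Hs\oplus\Hs$: surjectivity of $\{\Ga_0^V,\Ga_1^V\}$ (Definition~\ref{d:BT}) lets one attach to $\wh A$ the relation $\Theta=\{\{\Ga_0^V y,\Ga_1^V y\}:y\in\dom\wh A\}$, and vice versa. Setting $f=g=y$ in the Green identity of Definition~\ref{d:BT} yields the key identity $\im(A^*y|y)_\Xf=\im(\Ga_1^V y\,|\,\Ga_0^V y)_\Hs$. Hence $\wh A$ is dissipative iff $\im(v|u)_\Hs\le0$ for all $\{u,v\}\in\Theta$, i.e. $\Theta$ is a dissipative linear relation; and $\wh A$ is m-dissipative iff $\Theta$ is maximal dissipative. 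The upgrade from ``dissipative'' to ``m-dissipative $\Leftrightarrow$ \emph{maximal} dissipative'' is where boundary-triple theory is genuinely used: the condition $\CC_+\subseteq\rho(\wh A)$ with the resolvent bound translates, through the resolvent calculus attached to $\Tf^V$, into maximality of $\Theta$, and here I would cite the Kochubei framework \cite{K75} rather than reprove it.

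Finally I would apply Kochubei's description: maximal dissipative linear relations $\Theta$ in $\Hs$ stand in 1-to-1 correspondence with contractions $K\in\Lc(\Hs)$ via the Cayley-type identity $K(u+\ii v)=-(u-\ii v)$. Solving this for $\{u,v\}$ gives $(K+I_\Hs)u+\ii(K-I_\Hs)v=0$, and substituting $u=\Ga_0^V y=V^{-1}\Ga_0 y$, $v=\Ga_1^V y=V^\cross\Ga_1 y$ reproduces exactly the boundary condition \eqref{e:K+IGa}. Contractivity of $K$ is equivalent to dissipativity of $\Theta$: writing $p=u+\ii v$, the Cayley identity gives $u-\ii v=-Kp$, whence $\im(v|u)_\Hs=\tfrac14(\|Kp\|^2-\|p\|^2)$, which is $\le0$ for all $\{u,v\}\in\Theta$ precisely when $\|K\|\le1$; as $p$ ranges over $\Hs$ the relation exhausts a maximal dissipative $\Theta$, and distinct $K$ give distinct relations, so the correspondence is bijective. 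The self-adjointness claim follows from the same dictionary: $\wh A=\wh A^*$ iff $\Theta$ is a self-adjoint relation iff $\im(v|u)_\Hs\equiv0$ on $\Theta$ with $\Theta$ maximal in both directions, i.e. iff $\|Kp\|=\|p\|$ for all $p$ and $K$ is onto, that is, iff $K$ is unitary.

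The routine algebra (solving the Cayley identity with the correct signs, and the contractivity-versus-dissipativity computation above) is straightforward. The genuine technical point is the equivalence ``m-dissipative $\Leftrightarrow$ the boundary relation is \emph{maximal} dissipative'', i.e. that the resolvent condition $\CC_+\subseteq\rho(\wh A)$ with its norm bound corresponds to maximality (rather than mere dissipativity) of $\Theta$. Since the excerpt explicitly permits deriving the statement as a combination of Proposition~\ref{p:BT} with Kochubei's theorem \cite{K75}, I would discharge this point by citation, treating the remainder as bookkeeping between the two dual triples $\Tf^V$ and $\Tf^V_*$.
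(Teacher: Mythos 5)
Your proposal is correct and takes essentially the same route as the paper: the paper derives Proposition \ref{p:absM-dis} precisely as a combination of Proposition \ref{p:BT} with Kochubei's description \cite{K75} of maximal dissipative linear relations in $\Hs$, and your Cayley bookkeeping checks out — the identity $K(u+\ii v)=-(u-\ii v)$ unfolds exactly to \eqref{e:K+IGa} with $u=V^{-1}\Ga_0 y$, $v=V^\cross\Ga_1 y$, and the computation $\im(v|u)_\Hs=\tfrac14\bigl(\|Kp\|^2-\|p\|^2\bigr)$ correctly ties contractivity of $K$ to dissipativity of the relation.

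One local repair is needed in your (iii)$\Rightarrow$(i): the parenthetical claim that ``by \eqref{e:domA} every restriction of $A^*$ extends $A$'' is false as literally stated (restricting $A$ itself to a proper subspace of $\dom A$ yields a restriction of $A^*$ that does not extend $A$), and statement (iii) does not a priori present $\wh A$ through a homogeneous boundary condition, so you cannot yet argue that $\dom A$ satisfies the condition trivially. The fact you need — that an \emph{m-dissipative} restriction $\wh A\subseteq A^*$ automatically extends $A$ — follows from a short maximality argument: by the Green identity \eqref{e:AIP}, $\im(A^*h|h)_\Xf$ depends only on the boundary data $\{\Ga_0 h,\Ga_1 h\}$, which vanish on $\dom A$; hence the restriction of $A^*$ to $\dom\wh A+\dom A$ is still dissipative, and maximal dissipativity of $\wh A$ (Remark \ref{r:CritMdis}) forces $\dom A\subseteq\dom\wh A$. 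With this inclusion in hand, your identification of $\wh A$ with the relation $\Theta=\{\{\Ga_0^V y,\Ga_1^V y\}:y\in\dom\wh A\}$ becomes a genuine bijection on the class of operators in question, and the rest of your argument goes through as written.
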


For $1 \le  p \le \infty$, we denote by $\Sf_p (\Xf)$  the Schatten-von-Neumann ideals of compact operators in a Hilbert space $\Xf$. In particular, $\Sf_\infty (\Xf)$ is the set of all compact operators in $\Xf$.

\begin{prop}[\cite{K24}] \label{p:R-RinS}
In the settings of Proposition \ref{p:absM-dis}, let us consider two contractions $K_j \in \Lc(\Hs)$, $j=1,2$,
and two corresponding m-dissipative restrictions $\wt A_j$, $j=1,2$, of $A^*$ defined by the abstract boundary conditions \eqref{e:K+IGa} with $K=K_j$, $j=1,2$.
Let $\la \in \rho (\wt A_1 ) \cap \rho (\wt A_2 ) $ and $1 \le p \le \infty$. 
Then
\[
 \text{$(\wt A_2 - \la)^{-1} - (\wt A_1 - \la)^{-1}  \in \Sf_p (\Xf)$ if and only if $K_2 - K_1 \in \Sf_p (\Hs)$.}
 \]
\end{prop}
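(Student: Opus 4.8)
The plan is to strip the duality off the m-boundary tuple, reduce everything to an ordinary boundary triple, and then run Krein's resolvent formula in the contraction parametrization of Proposition~\ref{p:absM-dis}. First I would fix $V\in\Hom(\Hs,\Hs_{-,+})$ and pass to the boundary triple $\Tf^V=(\Hs,\Ga_0^V,\Ga_1^V)$ of Proposition~\ref{p:BT}. By \eqref{e:A01} the reference extension $A_0:=A^*\uph_{\ker\Ga_0^V}=A^*\uph_{\ker\Ga_0}$ is selfadjoint, so $\CC_+\subseteq\rho(A_0)$; since $\wt A_1,\wt A_2$ are m-dissipative one also has $\CC_+\subseteq\rho(\wt A_1)\cap\rho(\wt A_2)$. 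A standard resolvent-identity argument shows that the property $(\wt A_2-\la)^{-1}-(\wt A_1-\la)^{-1}\in\Sf_p(\Xf)$ is independent of the choice of $\la\in\rho(\wt A_1)\cap\rho(\wt A_2)$, so I may fix $\la\in\CC_+$, where the $\ga$-field $\ga(\la)$ and the Weyl function $M(\la)$ of $\Tf^V$ are bounded.

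Next I would invoke Krein's formula for $\Tf^V$. Writing $\Theta_K$ for the maximal dissipative linear relation in $\Hs$ cut out by the boundary condition \eqref{e:K+IGa} and $W_K(\la):=(\Theta_K-M(\la))^{-1}$, one has
\[
(\wt A_K-\la)^{-1}=(A_0-\la)^{-1}+\ga(\la)\,W_K(\la)\,\ga(\overline{\la})^{*},
\]
so the resolvent difference equals $\ga(\la)\bigl(W_{K_2}(\la)-W_{K_1}(\la)\bigr)\ga(\overline{\la})^{*}$. Since $\ga(\la)\colon\Hs\to\ker(A^*-\la)$ is a topological isomorphism (so $\Ga_0^V\ga(\la)=I_\Hs$ provides a bounded left inverse, and $\ga(\overline{\la})^{*}$ is correspondingly right-invertible), the two-sided ideal property of $\Sf_p$ gives
\[
(\wt A_2-\la)^{-1}-(\wt A_1-\la)^{-1}\in\Sf_p(\Xf)\iff W_{K_2}(\la)-W_{K_1}(\la)\in\Sf_p(\Hs).
\]

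It then remains to compare $W_{K_2}(\la)-W_{K_1}(\la)$ with $K_2-K_1$. Solving \eqref{e:K+IGa} for the boundary values exhibits $W_K(\la)$ as an operator Möbius transform of $K$, explicitly $W_K(\la)=\ii\,\wt D_K(\la)^{-1}(I-K)$ with $\wt D_K(\la):=(I-\ii M(\la))+K(I+\ii M(\la))$, and $\wt D_{K_j}(\la)$ is boundedly invertible exactly because $\la\in\rho(\wt A_{K_j})$. A direct algebraic manipulation of this expression produces a factorization
\[
W_{K_2}(\la)-W_{K_1}(\la)=L(\la)\,(K_2-K_1)\,R(\la),\qquad L(\la)=-\ii\,\wt D_{K_2}(\la)^{-1},\quad R(\la)=I-\ii\,(I+\ii M(\la))\,W_{K_1}(\la).
\]
Here $L(\la)$ is boundedly invertible (its inverse is $\ii\,\wt D_{K_2}(\la)$, bounded as $M(\la)$ and $K_2$ are bounded). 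Granting that $R(\la)$ admits a bounded right inverse, the ideal property closes both directions: $K_2-K_1\in\Sf_p$ forces the product into $\Sf_p$, and conversely cancelling the invertible $L(\la)$ on the left and the one-sided inverse of $R(\la)$ on the right recovers $K_2-K_1\in\Sf_p$.

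The hard part will be precisely this last one-sided invertibility, which is where the whole argument could fail: for non-unitary $K_j$ the relations $\Theta_{K_j}$ (equivalently the underlying boundary operators) may be \emph{unbounded}, and a naive parametrization by unbounded operators would destroy the two-sided Schatten transfer. The point of the contraction parametrization is exactly that every $K_j$ stays bounded, and I would settle the invertibility of $R(\la)$ by the telescoping identity
\[
(I-\ii M(\la)^{*})\,R(\la)^{*}=2\,\bigl(\wt D_{K_1}(\la)^{*}\bigr)^{-1}(I-\ii M(\la)^{*}),
\]
obtained from $\wt D_{K_1}(\la)^{*}=(I+\ii M(\la)^{*})+(I-\ii M(\la)^{*})K_1^{*}$; it yields a quantitative lower bound $\|u\|\le C\|R(\la)^{*}u\|$, so $R(\la)^{*}$ is bounded below and $R(\la)$ is right-invertible, the only inputs being $\la\in\rho(\wt A_{K_1})$ (bounded invertibility of $\wt D_{K_1}(\la)$) and $\la\in\CC_+$ (boundedness of $M(\la)$). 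Alternatively, as in the proof of Theorem~\ref{t:absCR}, one may bypass the explicit Möbius computation and read off $W_{K_2}-W_{K_1}\in\Sf_p\iff K_2-K_1\in\Sf_p$ from the known Schatten-class compatibility results for boundary triples in \cite[Theorems 3.1 and 3.4]{GG91} and \cite[Theorem 7.106 and Corollary 7.107]{DM17}, which is presumably the route of \cite{K24}.
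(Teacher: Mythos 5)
Your proposal is correct, but its core is a genuinely different argument from the paper's. The paper's proof is exactly the one sentence you relegate to your closing alternative: reduce to the boundary triple $\Tf^V$ via Proposition \ref{p:BT} and then cite the known Schatten-class compatibility theorem for boundary triples, \cite[Theorem 3.1]{GG91}. Your main route instead re-proves that theorem: Krein's formula, sandwiching by the left-/right-invertible factors $\ga(\la)$ and $\ga(\overline{\la})^*$, and the M\"obius factorization $W_{K_2}-W_{K_1}=L(\la)(K_2-K_1)R(\la)$. The algebra checks out: with $\wt D_{K}=(I-\ii M)+K(I+\ii M)$ one has $\wt D_{K_2}-\wt D_{K_1}=(K_2-K_1)(I+\ii M)$, which yields precisely your $L$ and $R$. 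In fact the step you flag as the hard part is cleaner than you fear: setting $\check{D}_{K_1}:=(I-\ii M)+(I+\ii M)K_1$ and using the intertwining identity $(I-K_1)\check{D}_{K_1}=\wt D_{K_1}(I-K_1)$, one computes
\[
R(\la)=\bigl[\check{D}_{K_1}+(I+\ii M)(I-K_1)\bigr]\check{D}_{K_1}^{-1}=2\,\check{D}_{K_1}^{-1},
\]
so $R(\la)$ is two-sided boundedly invertible, and your telescoping identity is a consequence of this. The one claim that needs tightening is that $\wt D_{K_j}(\la)$ (and $\check{D}_{K_1}(\la)$) is boundedly invertible ``exactly because $\la\in\rho(\wt A_{K_j})$'': from the relation form of Krein's formula one only reads off injectivity of $\wt D_{K_j}(\la)$ together with bounded solvability of $\wt D_{K_j}u=\ii(I-K_j)v$, not surjectivity of $\wt D_{K_j}(\la)$. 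The clean fix stays inside your own scheme: after your $\la$-independence step, fix $\la\in\CC_+$, where $\im M(\la)=(\im\la)\,\ga(\la)^*\ga(\la)\ge\ep I$; then the Cayley transform $(M(\la)-\ii)(M(\la)+\ii)^{-1}$ is a strict contraction, so $\wt D_K(\la)=-\ii\bigl[I-K(M(\la)-\ii)(M(\la)+\ii)^{-1}\bigr](M(\la)+\ii)$ is invertible for every contraction $K$ (and similarly for $\check{D}_{K}$, by passing to adjoints). In exchange for its length, your route is self-contained and produces explicit factorizations of the resolvent difference; the paper's route buys a two-line proof at the price of importing \cite[Theorem 3.1]{GG91} (or \cite[Theorem 7.106]{DM17}).
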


This result follows from Proposition \ref{p:BT} and  the corresponding result for boundary triples \cite[Theorem 3.1]{GG91}. Note that the operators  $\wt A_1$ and  $\wt A_2$ are m-dissipative in Proposition \ref{p:R-RinS}, and so, $\rho (\wt A_1 ) \cap \rho (\wt A_2 ) \supseteq \CC_+$.

\emph{Now, our aim is to prove Theorem \ref{t:absCR} using Proposition \ref{p:R-RinS}}.
Let $A_Z$ be an m-dissipative restriction of $A^*$ generated by the abstract impedance boundary condition
$ Z \Ga_0 y = \ii \Ga_1 y $. This means that the impedance operator $Z:\dom Z \subseteq \Hs_{-,+} \to \Hs_{+,-}$ satisfies statements  (ii)-(iii) of Proposition \ref{p:m-dis}. Assume also that the selfadjoint restriction $\wh A_1$ of $A^*$ generated by the condition $\Ga_1 y = 0$ has compact resolvent. Note that 
\begin{equation} \label{e:K1=I}
\text{$\wh A_1$ can be defined as in Proposition \ref{p:absM-dis} with $K$ replaced by $K_1 = -I$}.
\end{equation} 

 In order to use Proposition \ref{p:R-RinS}, we need to transform the condition $ Z \Ga_0 y = \ii \Ga_1 y $
into the form \eqref{e:K+IGa} with a certain contraction $K \in \Lc (\Hs)$. Such a contraction $K$ exists and is unique  by Proposition \ref{p:absM-dis}. Let us find this $K$. 

Using the homeomorphisms $V \in \Hom (\Hs, \Hs_{-,+})$ and $V^\cross \in \Hom (\Hs_{+,-},\Hs)$, one can rewrite $ Z \Ga_0 y = \ii \Ga_1 y $ as 
$Z_V V^{-1}\Ga_0 y = \ii V^\cross  \Ga_1 y $ with $Z_V = V^\cross Z V $. Since, by Proposition \ref{p:m-dis}, $Z$ is a maximal accretive and closed operator from $\Hs_{-,+}$ to $\Hs_{+,-}$, we see that $Z_V$ is a maximal accretive and closed operator in $\Hs$. By Remark \ref{r:CritMdis}, $Z_V$ is an m-accretive operator in $\Hs$. 
Then the Cayley transform 
\[
\Cc_{Z_V} := (Z_V- I)( Z_V+I)^{-1}
\] 
of $Z_V$ is a contraction in $\Hs$ \cite{P59}.

Since
\begin{gather} \label{e:CZV-I}
 \text{ the operator $\Cc_{Z_V}-I = -2( Z_V+I)^{-1}$ is injective }
\end{gather} 
and $(\Cc_{Z_V}-I)^{-1} = - ( Z_V+I)/2$, 
the condition \eqref{e:K+IGa} with $K=\Cc_{Z_V} $ can be equivalently transformed into  the condition
 \begin{gather*} 
(I-\Cc_{Z_V})^{-1} (I+\Cc_{Z_V}) V^{-1} \Ga_0 y =  \ii V^\cross  \Ga_1 y ,
\end{gather*}
and further into $Z_V V^{-1}\Ga_0 y = \ii V^\cross  \Ga_1 y $. Indeed, $(I-\Cc_{Z_V})^{-1} (I+\Cc_{Z_V}) $ is the inverse Cayley transform of $\Cc_{Z_V}$, and so, 
$Z_V = (I-\Cc_{Z_V})^{-1} (I+\Cc_{Z_V}) $.

Summarizing, we see that the restriction $A_Z$  is generated by the condition \eqref{e:K+IGa} with $K=\Cc_{Z_V}$.

Under the assumptions of Theorem \ref{t:absCR}, the selfadjoint restriction $\wh A_1$ of $A^*$ generated by the condition $\Ga_1 y = 0$ has a compact resolvent. 
Using \eqref{e:K1=I} and applying Proposition \ref{p:R-RinS} to the contractions $K_1 = -I$ and $K_2 = \Cc_{Z_V}$, we see that 
\begin{gather} \label{e:CZV+I}
\text{$\wh A$ has a compact resolvent if and only if \quad  $\Cc_{Z_V} +I \in \Sf_\infty (\Hs)$.}
\end{gather}

\begin{lem} \label{l:SinfEq}
(i) $\Cc_{Z_V} +I \in \Sf_\infty (\Hs)$ if and only if $Z_V \in \Sf_\infty (\Hs)$.
\item[(ii)] $Z_V \in \Sf_\infty (\Hs)$ if and only if $Z$ is compact as an operator from $\Hs_{-,+}$
to $\Hs_{+,-}$.
\end{lem}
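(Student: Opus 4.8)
The plan is to prove the two equivalences separately. For part (i) I would work entirely inside $\Hs$ via the algebra of the Cayley transform, and for part (ii) I would simply transport the ideal property of compact operators across the homeomorphisms $V$ and $V^\cross$. For (i), I would first record the identity obtained by expanding the Cayley transform. Recall that $Z_V$ is m-accretive, so $R := (Z_V + I)^{-1} \in \Lc (\Hs)$, and a direct computation gives $\Cc_{Z_V} = (Z_V - I)(Z_V+I)^{-1} = I - 2R$, whence
\[
\Cc_{Z_V} + I = 2(I - R) = 2\, Z_V (Z_V + I)^{-1}.
\]
The forward implication then follows at once from the two-sided ideal property: if $Z_V \in \Sf_\infty (\Hs)$, then $Z_V$ is bounded and $Z_V (Z_V+I)^{-1}$ is a compact operator composed with a bounded one, so $\Cc_{Z_V}+I \in \Sf_\infty (\Hs)$.

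The converse is the main obstacle, since from compactness of $\Cc_{Z_V}+I$ — equivalently of $I - R$ — one cannot directly read off that $Z_V = R^{-1} - I$ is even bounded until one knows that $R$ has a bounded inverse. I would settle this by a Fredholm argument. The relation $I - R \in \Sf_\infty (\Hs)$ exhibits $R$ as the identity plus a compact operator, so $R$ is Fredholm of index $0$; on the other hand $R$ is injective, being the resolvent $(Z_V+I)^{-1}$ of $Z_V$ at $-1 \in \rho (Z_V)$. Since an injective Fredholm operator of index $0$ is bijective, the bounded inverse theorem gives $R \in \Hom (\Hs)$, so $Z_V = R^{-1} - I$ is bounded and everywhere defined. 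Writing $Z_V = R^{-1}(I - R)$ as a bounded operator times the compact operator $I - R$ then yields $Z_V \in \Sf_\infty (\Hs)$, which completes (i). The delicate point here is precisely the passage from compactness of $I-R$ to boundedness of $Z_V$; everything downstream of the invertibility of $R$ is a routine application of the ideal property.

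For part (ii), the identity $Z_V = V^\cross Z V$, with $V \in \Hom (\Hs, \Hs_{-,+})$ and $V^\cross \in \Hom (\Hs_{+,-}, \Hs)$, reduces the statement to the invariance of compactness under composition with linear homeomorphisms on both sides. If $Z$ is compact from $\Hs_{-,+}$ to $\Hs_{+,-}$ (so in particular $\dom Z = \Hs_{-,+}$, whence $\dom Z_V = V^{-1}(\dom Z) = \Hs$), then $V^\cross Z V$ is compact as a composition of the compact operator $Z$ with the bounded operators $V$ and $V^\cross$. Conversely, if $Z_V \in \Sf_\infty (\Hs)$, then $\dom Z = V(\dom Z_V) = \Hs_{-,+}$ and $Z = (V^\cross)^{-1} Z_V V^{-1}$ is compact, since $(V^\cross)^{-1} \in \Hom (\Hs, \Hs_{+,-})$ and $V^{-1} \in \Hom (\Hs_{-,+}, \Hs)$ are bounded. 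This is just the ideal property of $\Sf_\infty$ carried across the homeomorphisms $V$ and $V^\cross$, and I expect it to present no genuine difficulty.
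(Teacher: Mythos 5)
Your proof is correct and takes essentially the same route as the paper: the identity $\Cc_{Z_V}+I = 2Z_V(Z_V+I)^{-1}$ plus the ideal property for the forward direction, injectivity of $I-\Cc_{Z_V}$ combined with Riesz--Schauder/Fredholm theory for the converse, and transport of compactness across the homeomorphisms $V$ and $V^\cross$ for part (ii). The paper phrases the converse step as ``$2\notin\si(\Cc_{Z_V}+I)$ since $\Cc_{Z_V}+I$ is compact and $2I-(\Cc_{Z_V}+I)$ is injective,'' which is the same Fredholm-alternative fact you invoke for $R=I-(I-R)$.
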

\begin{proof}
(i) \emph{Step 1.} Note that $\Cc_{Z_V} +I = 2 Z_V (Z_V+I)^{-1} $, where $(Z_V+I)^{-1} \in \Lc (\Hs)$ 
since the spectrum of the m-accretive operator $Z_V$ is in $\overline{\CC}_\rr$.
Hence, $Z_V \in \Sf_\infty (\Hs)$ implies $\Cc_{Z_V} +I \in \Sf_\infty (\Hs)$.

\emph{Step 2.} Assume now that $T=\Cc_{Z_V} +I$ is compact. The formula 
$Z_V = (I-\Cc_{Z_V})^{-1} (I+\Cc_{Z_V}) $ gives $Z_V = (2I - T)^{-1} T$, 
where the operator $2I - T = I-\Cc_{Z_V}$ is injective due to \eqref{e:CZV-I}.
This and the assumption $T \in \Sf_\infty (\Hs)$ imply that $2 \not \in \si (T)$, and so,
$(2I - T)^{-1} \in \Lc (\Hs)$. Thus, $Z_V = (2I - T)^{-1} T \in \Sf_\infty (\Hs)$.

(ii) We use the fact that $V$ and $V^\cross$ are homeomorphisms, $V \in \Hom (\Hs, \Hs_{-,+})$ and $V^\cross \in \Hom (\Hs_{+,-},\Hs)$. Thus, the equality $Z_V = V^\cross Z V $ implies statement (ii).
\end{proof}

Combining \eqref{e:CZV+I} and Lemma \ref{l:SinfEq}, \emph{we complete the proof of Theorem \ref{t:absCR}.}

\begin{rem}
Alternatively, Theorem \ref{t:absCR} can be obtained from Proposition  \ref{p:BT} and \cite[Corollary 7.107]{DM17}.
\end{rem}

\section{M-dissipative acoustic operators with discrete spectra}

\subsection{Boundary tuples for second- and first- order acoustic operators}
\label{s:SymAcOp}

Let $\D$ be a domain in $\RR^d$ with $d \ge 2$, i.e., $\D$ is a non-empty bounded open connected subset of $\RR^d$.  We always  assume that $\D$ is a Lipschitz domain, i.e.,
its  boundary $\pa \D$ satisfies the Lipschitz regularity condition \cite{M03,ACL18}.
Let $\RR_{\sym}^{d\times d}$ be a normed space of $d\times d$ real-valued symmetric matrices (with an arbitrary norm).

Let the material parameters $\ab (\cdot)  = (\ab_{j,k} (\cdot))_{j,k=1}^d \in L^\infty (\D, \RR_{\sym}^{d\times d})$ and $\beta \in L^\infty (\D,\RR)$ be uniformly positive in the sense that they satisfy for almost all (a.a.) $\x \in \D$ the conditions 
\[
\text{
 $\beta (\x) \ge \beta_0 $ and $\ab (\x) \ge c \II $ for certain constants $\beta_0, c >0$.}
\] 
  Here
$\ab (\x)$ and the identity $d\times d$-matrix $\II$ are identified with selfadjoint operators
in $\CC^d$, while $\ge$ is the standard partial order of bounded selfadjoint operators. 
By $\ab^{-1} (\cdot) \in L^\infty (\D, \RR_{\sym}^{d\times d}) $, we denote the pointwise inversion  $ (\ab (\x))^{-1}$, $\x \in \D$, for the matrix-valued function $\ab (\cdot)$.
The uniform positivity assumption for $\ab (\cdot)$ implies that $\ab^{-1}  \in L^\infty (\D, \RR_{\sym}^{d\times d})$.

By $\LL^2 (\D)=L^2 (\D,\CC^d)$, we denote the standard Hilbert space of complex vector fields in $\D$ equipped with the sesquilinear inner product 
\[
(  \ubf | \vbf )_{\LL^2 } =  \int_{\D} \ubf \cdot \overline{\vbf}  =  \int_{\D} (  \ubf  | \vbf  )_{\CC^d} . 
\]
The weighted Hilbert space $L^2_\beta = L^2_\beta (\D)$ of $\CC$-valued functions 
coincide with $L^2 (\D)=L^2 (\D,\CC) $ as a linear space, but has another (equivalent) norm $  \| \cdot \|_{L^2_\beta}$ satisfying 
\[
\| f  \|_{L^2_\beta}^2 =  ( \beta f | f)_{L^2 (\D) }^{1/2} = \int_{\D} \beta |f|^2  .
\]

For $k \in \NN$, we use  the standard complex Hilbertian Sobolev spaces 
$H^k ( \D) = W^{k,2} ( \D)$ and $H^k_0 ( \D) = W^{k,2}_0 ( \D)$ (see, e.g., \cite{M03,GM11}),
as well as the corresponding Hilbert spaces of distributions $H^{-k} ( \D) = W^{-k,2} ( \D)$ that are 
dual to  $H^k_0 ( \D) $ w.r.t. the pivot space $ H^0 (\D) := L^2 (\D)$.

Let $s \in \RR$, $1<p<\infty$, and $1\le q \le \infty$.
Following  \cite{T83,T92}, we denote by $B^s_{p,q} ( \RR^d) $  the Besov spaces in $\RR^d$.
Note that  the standard  fractional Sobolev(-Slobodetckij) spaces
$W^{s,2} ( \RR^d) $ coincide with $B^s_{2,2} (\RR^d)$ up to equivalence of norms \cite{T83}.

Let $s>0$, $1<p<\infty$, and $1\le q \le \infty$.
 Let $B^s_{p,q} (\overline{\D}) $ and $B^s_{p,q} (\pa \D) $ be the Besov spaces in the closure $\overline{\D}$ of $\D$ and on $\pa \D$, respectively, in the sense of \cite{JW84}. Note that $\overline{\D}$  is a $d$-set in the terminology of \cite{JW84}, while $\pa \D$ is a $(d-1)$-set. By comparison of definitions and results concerning Besov spaces $B^s_{p,q} (\overline{\D}) $ and  $B^s_{p,q} (\D) $ in \cite{JW84,T92,T02}, one sees that $B^s_{p,p} (\D) = B^s_{p,p} (\overline{ \D})$. For $0<s<1$, the fractional Sobolev spaces $W^{s,2} ( \D) $  coincide with   $B^s_{2,2} (\D) = B^s_{2,2} (\overline{ \D})$  up  to equivalence of norms (this can be seen by the comparison of \cite{M03} and \cite[Section 5.1.1]{JW84}). 

On the Lipschitz boundary $\pa \D$, we use for $1 \le q \le \infty$ the spaces 
 $L^q (\pa \D) $  of
$\CC$-valued $L^q$-functions built w.r.t.  the surface measure of $\partial \D$ and equipped with 
the standard $L^q$-norms $ \| \cdot \|_{L^q (\pa \D)}$.  
The outward unit vector $\n (\x)$ normal to $\partial \D$ at $\x$ is defined for a.a. $\x \in \pa \D$. The resulting function $ \n (\cdot)$ belongs to the space $L^\infty  (\pa \D, \RR^n)$ of the $\RR^n$-vector fields.

 For $s \in (0,1]$ and $1<p<\infty$,  one can define \cite{M03,GM11} the complex fractional Sobolev spaces $W^{s,p} (\pa \D)=W^{s,p} (\pa \D,\CC)$ lifting $W^{s,p} (\RR^{d-1})$ to $\pa \D$ via localization  and pullback. These spaces are reflexive Banach spaces. It follows from \cite{GMMM11} that in the particular case $H^s (\pa \D) := W^{s,2} (\pa \D)$, $s \in (0,1]$, an equivalent norm can be choosen in such a way that  $H^s (\pa \D)$ becomes a  Hilbert space. 
 By $H^{-s} (\pa \D) $ we denote the adjoint space to $H^s (\pa \D)$  w.r.t. the pivot space $H^0 (\pa \D) := L^2 (\pa \D)$.
We use the compact embeddings 
\begin{gather} \label{e:CEs1s2}
H^{s_2} (\pa \D) \imb \imb H^{s_1} (\pa \D) , \qquad -1 \le s_1 < s_2 \le 1  \qquad \text{(see \cite{GMMM11})}.
\end{gather}


The gradient operator $\gradm : f \mapsto \nabla f $ with the maximal (natural) domain 
$H^1 (\D) := \{ f \in L^2 (\D) \ : \nabla f \in \LL^2 (\D) \}$ in $L^2 (\D)$
is considered  an operator from $L^2 (\D)$ to $\LL^2 (\D)$, i.e., $\gradm : H^1 (\D) \subset L^2 (\D)  \to \LL^2 (\D)$.
Here $\nabla f$ is understood in the distribution sense. Note that the operator $\gradm$ is closed.

Vector spaces built as domains of operators are assumed to be equipped with the graph norms. 
If these operators are closed as operators between Hilbert spaces, their domains also become  Hilbert spaces.

The operator $\gradn$ is defined as the closure 
$\overline{\gradm \uph}_{C^\infty_0 (\D)}$ 
in $L^2 (\D)$ of the restriction $\gradm \uph_{C^\infty_0 (\D)}$ of the operator $\gradm$ to the space $C^\infty_0 (\D)$ of compactly supported in $\D$ smooth complex scalar functions. Its domain 
$\dom \gradn = H^1_0 (\D) $  is a closed subspace of $H^1 (\D)$.

The divergence operator 
$
\Div : \ubf \mapsto \nabla \cdot u$ 
from $\LL^2 (\D)$ to $L^2 (\D)$  is considered on its maximal domain  
\[
 \HH (\Div, \D) := \{ \ubf \in \LL^2 (\D)  : \nabla \cdot \ubf \in L^2 (\D)\}.
 \]
This operator is closed and coincides with  $(-\gradn)^*$.

The operator of the scalar trace 
$\ga_0 : p \mapsto p\!\uph_{ \pa \D}$ can be understood as a continuous operator  $\ga_0 \in \Lc (H^1(\D), H^{1/2} (\pa \D))$ and is surjective in this sense, i.e., $\ga_0 H^1(\D) = H^{1/2} (\pa \D)$.
Similarly, the normal trace   
$\gan : \vbf  \mapsto \n \cdot \vbf (x) \uph_{\pa \D}$ can be understood as 
 $\gan \in \Lc (\HH (\Div, \D), H^{-1/2} (\pa \D))$ and  is also surjective , i.e., $\gan \HH (\Div, \D) = H^{-1/2} (\pa \D)$ 
 (see, e.g., \cite{M03}).

The closed operator $\Div_0 : \HH_0 (\Div,\D) \subset \LL^2 (\D)   \to L^2 (\D)$ 
defined by $\Div_0 := \gradm^*$
is a restriction of the operator $\Div$ to 
\[
\text{ $\HH_0 (\Div,\D) = \{\ubf \in \HH (\Div, \D) \ : \ \gan \ubf = 0\}$ (see, e.g., \cite{L13}).} 
 \]
 
Our main aim is to study the discreteness of spectra of operators associated with models of leaky acoustic resonators. Following \cite{L13}, we write the 2nd-order acoustic evolution equation in the Schrödinger form
\begin{gather} \label{e:Ac2or}
\ii \pa_t \wt \Phi = \bfr_{\ab,\beta} (\wt \Phi) \quad \text{ with  } \quad \bfr_{\ab,\beta} : \Phi = \begin{pmatrix} u  \\ p   \end{pmatrix} \mapsto \ii \begin{pmatrix}  p  \\
\beta^{-1}  \nabla \cdot (  \ab^{-1}  \nabla u)  \end{pmatrix} 
\end{gather} 
The energy of the state $\wt \Phi (t) = \{\wt u (t), \wt p (t)\} $ at the time $t$ is given by 
\[
\Ec (\wt \Phi (t)) = \frac 12 \left( \int_\D \al^{-1} |\nabla \wt u|^2 \dd x + \int_\D \beta |\wt p|^2 \dd x \right) .
\]

In the next subsection we equip $\ii \pa_t \wt \Phi = \bfr_{\ab,\beta} (\wt \Phi)$ with a linear time-independent boundary condition in such a way that $\Ec (\wt \Phi (t)) \le \Ec (\wt \Phi (0))$ for $t>0$.
In other words, we are interested in boundary conditions for the equation 
$ \pa_t \wt \Phi = - \ii \bfr_{\ab,\beta} (\wt \Phi)$ that lead to a contraction semigroup  in the appropriate phase space with the `energy norm'. This is equivalent to the m-dissipativity of the acoustic operator $\wh  \Bc$ associated with such  a boundary condition.

Let us define the Hilbert space corresponding to the energy norm. Consider in the Sobolev space $H^1 (\D) $ the semi-norm $\| u \|_{1,\ab^{-1}} = (\ab^{-1} \nabla u|\nabla u)_{\LL^2 }^{1/2}$. Since the subspace 
\[
\gradm H^1 (\D) :=\{\nabla u : u \in H^1 (\D)\}
\]
 is closed in $\LL^2 (\D)$ (see, e.g., \cite[Section 7.4]{L13}), one sees that  the factorization of the semi-Hilbert space $(\gradm H^1 (\D), \| \cdot \|_{1,\al^{-1}} )$ w.r.t. the 1-dimensional subspace $\{c \one : c \in \CC\}$ of constant functions produces a  Hilbert space, which we denote by 
 $(H_{1,\ab^{-1}} (\D), \| \cdot \|_{1,\al^{-1}}) $.
 
The evolution equation $\ii \pa_t \wt \Phi  = \bfr_{\ab,\beta} (\wt \Phi)$ 
is considered in the phase space 
\[
\XX := H_{1,\ab^{-1}} (\D) \oplus L^2_{\beta} (\D),
\]
which is a Hilbert space constructed as an orthogonal sum of $H_{1,\ab^{-1}} (\D)$ 
and $ L^2_{\beta} (\D)$. This means that
\[
\| \Phi \|_\XX^2 = \| u \|_{1,\ab^{-1}}^2 + \| p  \|_{L^2_\beta}^2  \quad \text{ for $\Phi = \{u,p\}$  corresponds to $2 \Ec (\Phi)$}.
\]

In order to introduce general m-dissipative boundary conditions by means of the extension theory and m-boundary tuples (see Section \ref{s:AbsMdis}),
one has to associate with the differential operation $\bfr_{\ab,\beta}$ a symmetric operator $\Bc:\dom \Bc \subset \XX \to \XX$  with a minimal natural domain $\dom \Bc$ in $\XX$.
We introduce this operator  $\Bc$ by the operator block-matrix 
\begin{gather}
\Bc := \ii \begin{pmatrix} 0 & \Ic_0  \\
\beta^{-1} \Div_0  \ab^{-1} \gradm_1  & 0  \end{pmatrix} , \label{e:B}
\end{gather}
where $\gradm_1 \in  \Lc (H_{1,\ab^{-1}} (\D), \LL^2 (\D))$ is one more version of the gradient operator  that maps an equivalence class $ \{u +c \one : c \in \CC\} \in H_{1,\ab^{-1}} (\D)$
to $ \nabla u$, while  $\Ic_0: H^1_0 (\D) \subset L^2_{\beta} (\D) \to H_{1,\ab^{-1}} (\D) $ is   the identification operators that map $p \in H^1_0 (\D)$ to the equivalence class $ \{p +c \one : c \in \CC\}$ in $H_{1,\ab^{-1}} (\D)$.
The notation $\one$  stands to the constant function equal to $1$.

This definition means that 
\begin{gather} \label{e:domB}
\dom \Bc =\{ \{u,p\} \in \XX \  : \quad \ab^{-1} \nabla u \in \HH_0 (\Div,\D) , \quad p \in   H^1_0 (\D)\}.
\end{gather}

It is easy to see by the integration by parts  that the operator $\Bc$ is symmetric (w.r.t. the inner product $(\cdot|\cdot)_\XX$ of $\XX$). Further properties of $\Bc$ are summarized in the following lemma, which is a combination of \cite[Lemma 2.19]{K24}, \cite[Remark 2.9]{K24}, \cite[Proposition 3.3]{K24}, and \cite[Proposition 6.2]{K24}.

\begin{lem}[\cite{K24}] \label{l:B}
(i) The operator $\Bc$ is symmetric, closed, and densely defined in $\XX$.
\item[(ii)]  The adjoint operator $\Bc^*$ is given by the formulae
\begin{gather}
 \Bc^* = \ii 
\begin{pmatrix} 0 & \Ic_1  \\
\beta^{-1} \Div  \ab^{-1} \gradm_1  & 0  \end{pmatrix},  \label{e:B*}
\\
 \dom \Bc^* =\{ \{u,p\} \in \XX : \ab^{-1} \nabla u \in \HH (\Div,\D) , \ p \in H^1 (\D)  \},
\label{e:domB*}
\end{gather}
where 
$\Ic_1:H^1 (\D) \subset L^2_{\beta} (\D) \to H_{1,\ab^{-1}} (\D)$ is the identification operators that map $p \in H^1 (\D)$ to the equivalence class $ \{p +c \one : c \in \CC\}$ in $H_{1,\ab^{-1}} (\D)$.
\item[(iii)] 
Let us define  $\wt \ga_0  (\{u,p\}) := \ga_0 (p) $ and $\wt \gan (\{u,p\}) := \gan (-\ab^{-1} \nabla u)$. Then
\begin{multline} \label{e:tTuples}
 \Tc^2 :=(H^{1/2} (\pa \D), L^2 (\pa \D) , H^{-1/2} (\pa \D),  \wt \ga_0, \ (-\ii) \wt \gan ) 
\\ \text{ and } \quad \Tc_*^2 :=(H^{-1/2} (\pa \D), L^2 (\pa \D) , H^{1/2} (\pa \D),  \wt \gan, (- \ii) \wt \ga_0  ) 
\end{multline} 
are m-boundary tuples for $\Bc^*$ (we say that these two m-boundary tuples are mutually dual).
\end{lem}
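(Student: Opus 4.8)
The plan is to establish the three assertions in the order (symmetry and density in (i)), (ii), (closedness in (i)), (iii), reducing everything to the two adjoint identities $\Div_0 = \gradm^*$ and $\Div = (-\gradn)^*$ recorded above, together with surjectivity of the scalar and normal traces and the Green formula on the Lipschitz domain $\D$; the adjoint computation of (ii) does most of the work and then feeds back into the closedness claim of (i) and into the abstract Green identity of (iii). For the symmetry I would take $\Phi = \{u,p\}$ and $\Psi = \{v,q\}$ in $\dom\Bc$ and expand $(\Bc\Phi|\Psi)_\XX - (\Phi|\Bc\Psi)_\XX$ using the weighted inner products of $H_{1,\ab^{-1}}(\D)$ and $L^2_\beta(\D)$. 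The cross terms produce $\pm\ii(\ab^{-1}\nabla p|\nabla v)_{\LL^2}$ and $\pm\ii(\Div_0\ab^{-1}\nabla u|q)_{L^2}$; since $\ab^{-1}\nabla u\in\HH_0(\Div,\D)$ has vanishing normal trace and $q\in H^1_0(\D)$ has vanishing scalar trace, the Green formula turns the second pairing into $\mp\ii(\ab^{-1}\nabla u|\nabla q)_{\LL^2}$ with no boundary contribution, and all four terms cancel. Density of $\dom\Bc$ in $\XX$ is a routine approximation argument in each factor.

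For (ii) I would prove the two inclusions separately. That the operator on the right of \eqref{e:B*}--\eqref{e:domB*} is contained in $\Bc^*$ is the same Green computation, now run for $\Phi\in\dom\Bc$ against $\Psi=\{v,q\}$ with $\ab^{-1}\nabla v\in\HH(\Div,\D)$ and $q\in H^1(\D)$: the traces of $\Phi$ still vanish, the boundary terms drop, and one reads off $(\Bc\Phi|\Psi)_\XX=(\Phi|\Bc^*\Psi)_\XX$ with $\Bc^*\Psi$ as claimed. For the reverse inclusion I would fix $\Psi=\{v,q\}$ for which $\Phi\mapsto(\Bc\Phi|\Psi)_\XX$ is $\XX$-bounded and extract the missing regularity: boundedness in the $p$-slot of $(\ab^{-1}\nabla p|\nabla v)_{\LL^2}$ over $p\in H^1_0(\D)$ forces $\ab^{-1}\nabla v\in\HH(\Div,\D)$ through $\Div=(-\gradn)^*$, while boundedness in the $u$-slot of $(\Div_0\ab^{-1}\nabla u|q)_{L^2}$ forces $q$ into $\dom\gradm = H^1(\D)$ via $(\Div_0)^*=\gradm$. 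This pins down $\dom\Bc^*$ exactly. Returning to (i), since $H^1_0(\D)=\ker\ga_0$ and $\HH_0(\Div,\D)=\ker\gan$, the domain $\dom\Bc$ is precisely the set of $f\in\dom\Bc^*$ annihilated by the maps $\wt\ga_0$ and $\wt\gan$, which are continuous in the graph norm of the closed operator $\Bc^*$; hence $\dom\Bc$ is closed in that norm, and $\Bc$, being a restriction of $\Bc^*$, is closed.

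For (iii) the triple $(H^{1/2}(\pa\D),L^2(\pa\D),H^{-1/2}(\pa\D))$ is the standard rigged Hilbert space, so it remains to verify the two axioms of Definition \ref{d:MBT}. Surjectivity of $\{\wt\ga_0,(-\ii)\wt\gan\}$ onto $H^{1/2}(\pa\D)\oplus H^{-1/2}(\pa\D)$ splits across the two independent slots: the value $\ga_0 p$ is prescribed using surjectivity of $\ga_0:H^1(\D)\to H^{1/2}(\pa\D)$, and the value $\gan(-\ab^{-1}\nabla u)$ using surjectivity of the Neumann-type map $u\mapsto\gan(\ab^{-1}\nabla u)$ onto $H^{-1/2}(\pa\D)$, which I would realise by solving an auxiliary problem $\Div(\ab^{-1}\nabla u)=f$ with prescribed normal trace, the single compatibility obstruction being absorbed by the quotient-by-constants built into $H_{1,\ab^{-1}}(\D)$. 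Finally, the abstract integration by parts \eqref{e:AIP} is the full Green formula for $\Bc^*$: running the (ii)-computation for two general elements of $\dom\Bc^*$ now keeps the boundary terms, which combine into a pairing of the $\wt\gan$- and $\wt\ga_0$-values and reproduce the right-hand side of \eqref{e:AIP} once the $(-\ii)$ factors prescribed for $\Ga_1$ in \eqref{e:tTuples} are inserted. The dual tuple $\Tc_*^2$ is handled by the same computation with the two traces interchanged.

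The step I expect to be the main obstacle is the surjectivity of the normal-trace map $u\mapsto\gan(\ab^{-1}\nabla u)$ in (iii): unlike the scalar trace, this is not a generic trace theorem but requires solvability of a variable-coefficient Neumann problem on a Lipschitz domain, together with a precise check that its lone compatibility condition is exactly neutralised by factoring out constants in the energy space $H_{1,\ab^{-1}}(\D)$. A secondary delicate point is the bookkeeping of the weighted inner products and of the identifications $\Ic_0,\Ic_1,\gradm_1$, which must be tracked carefully so that the signs and the $(-\ii)$ factors in \eqref{e:tTuples} come out right.
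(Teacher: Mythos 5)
Your direct route (computing $\Bc^*$ by hand instead of the paper's reduction to a first-order operator) is legitimate, and several pieces are sound: the symmetry computation, the inclusion ``right-hand side of \eqref{e:B*}--\eqref{e:domB*} $\subseteq\Bc^*$'', the graph-norm closedness argument for (i), and the plan for (iii), including your correct identification of the Neumann-trace surjectivity as the substantive point there. The genuine gap is in the reverse inclusion of (ii), in the $u$-slot. Testing $(\Bc\Phi|\Psi)_\XX$ with $\Phi=\{u,0\}\in\dom\Bc$ gives boundedness of $\wbf\mapsto(\Div_0\wbf\,|\,q)_{L^2}$ only for $\wbf$ in the set $\ab^{-1}\gradm H^1(\D)\cap\HH_0(\Div,\D)$, not on all of $\dom\Div_0=\HH_0(\Div,\D)$; and this set is \emph{not} dense in $\HH_0(\Div,\D)$: its $\LL^2$-closure lies inside $\ab^{-1}\gradm H^1(\D)$ and therefore misses the (generically infinite-dimensional) subspace $\HH_0(\Div 0,\D)$ of divergence-free fields with vanishing normal trace. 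Consequently the adjoint identity $(\Div_0)^*=\gradm$ cannot be invoked as stated -- boundedness of a functional on a non-dense subspace of $\dom\Div_0$ does not place $q$ in $\dom(\Div_0)^*$. The repair requires the weighted Helmholtz-type decomposition \eqref{e:La}, $\LL^2_{\ab}(\D)=\HH_0(\Div 0,\D)\oplus\ab^{-1}\gradm H^1(\D)$ (which follows from $\ker\gradm^*=(\ran\gradm)^{\perp}$ and the closedness of $\gradm H^1(\D)$): every $\wbf\in\HH_0(\Div,\D)$ then splits as $\wbf_0+\wbf_1$ with $\wbf_0\in\HH_0(\Div 0,\D)\subseteq\ker\Div_0$ and $\wbf_1=\ab^{-1}\nabla u\in\HH_0(\Div,\D)$, so $(\Div_0\wbf\,|\,q)=(\Div_0\wbf_1\,|\,q)$ with $\|\wbf_1\|_{\LL^2_{\ab}}\le\|\wbf\|_{\LL^2_{\ab}}$, extending the boundedness to all of $\HH_0(\Div,\D)$; only then does $q\in H^1(\D)$ follow. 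The same omission undermines your ``routine approximation'' claim for density of $\dom\Bc$: in the $u$-factor one must show that gradient fields constrained simultaneously in divergence and in normal trace are dense in $H_{1,\ab^{-1}}(\D)$, which again needs either \eqref{e:La} or the solvability of the variable-coefficient Neumann problem (which you invoke only later, in (iii)).

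It is worth noting that the ingredient you are missing is exactly the structural fact on which the paper's own, quite different, proof rests. The paper works with the first-order operator $\A$ of \eqref{e:A}, whose domain $\HH_0(\Div,\D)\times H^1_0(\D)$ makes density obvious and makes the full space $\HH_0(\Div,\D)$ of test fields available in the adjoint computation; it then observes that $\HH_0(\Div 0,\D)=\ker\A$ induces the reducing decomposition \eqref{e:LabDec} and transports everything to $\Bc$ through the unitary map $\Uc$ of \eqref{e:U} onto $\GG_{\ab,\beta}$, with statement (iii) inherited from the m-boundary tuples \eqref{e:mTf} for $\A^*$. So once you add the decomposition \eqref{e:La} to your argument, your direct proof closes and becomes a self-contained alternative to the paper's; without it, both the density claim and the computation of $\dom\Bc^*$ are incomplete.
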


Note that m-boundary tuples are defined in Definition \ref{d:MBT} and that the boundary space $L^2 (\pa \D) $  plays in \eqref{e:tTuples} the role of a pivot space. Its scalar product $(\cdot|\cdot)_{L^2 (\pa \D)} $ generates for each $s \in (0,1]$ two sesquilinear forms that provide the parings of the Sobolev space $H^{-s} (\pa \D)$ with $H^{s} (\pa \D) $,  and conversely, of $H^{s} (\pa \D)$ with $H^{-s} (\pa \D) $. These two sesquilinear forms are denoted in the same way $\<\cdot,\star\>_{L^2 (\pa \D)}$ and are called the $L^2 (\pa \D)$-pairings (for all $s \in (0,1]$).  We mainly use these $L^2 (\pa \D)$-pairings for  the rigged Hilbert space $H^{1/2} (\pa \D) \imb  L^2 (\pa \D) \imb H^{-1/2} (\pa \D)$ and the m-boundary tuple $ \Tc^2$ of \eqref{e:tTuples}.

\subsection{First-order acoustic operators and the proof of Lemma \ref{l:B}}
\label{s:1stOrder}

In this subsection, a sketch of the proof of Lemma \ref{l:B} is given following \cite{K24}.
It explains the connection with the 1st-order acoustic operators of \cite{L13}, which will be used in Section \ref{s:ProofMT}.

The `weighted' Hilbert space of vector fields $ \LL^2_\ab = \LL^2_\ab (\D)$ coincides with the Hilbert space 
$\LL^2 (\D) =L^2 (\D,\CC^d) $ as a linear space, but is equipped with the equivalent weighted norm $ \| \cdot \|_{\LL^2_\ab}$ defined by
$  \| \vbf  \|_{\LL^2_\ab}^2 =  ( \ab \vbf | \vbf)_{\LL^2 (\D) }$.
The Hilbert space $\LL^2_{\ab,\beta}$ is defined as the orthogonal sum
\[
\LL^2_{\ab,\beta} (\D)  = \LL^2_{\ab,\beta} := \LL^2_{\ab} (\D) \oplus L^2_\beta (\D).
 \]

Let us consider in the phase space $\LL^2_{\ab,\beta} (\D)$  the 1-st-order version of the acoustic system 
\begin{gather} \label{e:Ac}
\ii \pa_t \wt \Psi   = \af_{\ab,\beta} \wt \Psi , 
\quad 
 \af_{\ab,\beta} :  \begin{pmatrix} \vbf  \\ p \end{pmatrix} \mapsto \frac{1}{\ii} \begin{pmatrix}  \ab^{-1} \nabla p \\
 \beta^{-1} \nabla \cdot \vbf \end{pmatrix}.
\end{gather} 
In what follows, we use the notation $\Psi = \{\vbf,p\} = \begin{pmatrix} \vbf  \\ p \end{pmatrix} $.

We associate with the 1st-order differential expression $\af_{\ab,\beta}$ the closed symmetric operator $\A: \dom \A \subset \LL^2_{\ab, \beta} (\D) \to \LL^2_{\ab, \beta} (\D)$ defined by 
\begin{gather}\label{e:A}
\A \begin{pmatrix} \vbf \\ p \end{pmatrix} = \begin{pmatrix} 0 & -\ii \ab^{-1} \gradn \\
-\ii \beta^{-1} \Divn & 0 \end{pmatrix} \begin{pmatrix} \vbf \\ p \end{pmatrix} , \quad 
\end{gather}
where the state $\Psi = \{  \vbf , p \}$ in the phase space $\LL^2_{\ab, \beta} (\D)$ belongs to the domain of $\A$ given by $\dom \A := \HH_0 (\Div,\D) \times  H_0^1 (\D) $.
Obviously, $\dom \A$ is dense in $\LL^2_{\ab, \beta} (\D) $. 

It is easy to see from the integration by parts formula 
\begin{gather} \label{e:IbyP}
( \gradm p | \vbf)_{L^2 (\D,\CC^3)} + (p | \Div \vbf)_{L^2 (\D)} = \<\ga_0 p | \gan \vbf \>_{L^2 (\pa \D)}  
\end{gather}
with $ p \in H^1 (\D)$ and $\vbf \in \HH (\Div, \D)$ (see \cite{M03,ACL18}) 
that the adjoint operator $\A^*$ has another domain $\dom \A^* = \HH (\Div,\D) \times H^1 (\D)$, but is associated with the same differential expression $\af_{\ab,\beta}$.

Let us define  $\wh \ga_0  (\{\vbf,p\}) := \ga_0 (p) $ and $\wh \gan (\{\vbf,p\}) := \gan (\vbf)$. 
The integration by parts \eqref{e:IbyP} implies also that 
\begin{multline} \label{e:mTf}
 \Tc^1 :=(H^{1/2} (\pa \D), L^2 (\pa \D) , H^{-1/2} (\pa \D),  \wh \ga_0, \ (-\ii) \wh \gan ) \text{ and }
\\  \quad \Tc_*^1 :=(H^{-1/2} (\pa \D), L^2 (\pa \D) , H^{1/2} (\pa \D),  \wh \gan, (- \ii)\wh \ga_0  ) \\
 \text{ are m-boundary tuples for $\A^*$.} 
\end{multline} 


The 1st-order acoustic equation \eqref{e:Ac} is not completely equivalent to the 2nd-order version 
\eqref{e:Ac2or}. However, it becomes equivalent after the restriction of \eqref{e:Ac}
to the case where the vector-fields $\ab \vbf$ are gradients of $H^1 (\Om)$-functions.

Let us perform this reduction.
The notation $\ker T := \{y \in \dom T : Ty=0\}$ stands for the kernel of an operator $T:\dom T \subseteq \Xf_1 \to \Xf_2 $.
The kernel $ \ker \Div_0$ of the closed operator $\Div_0$ is a closed subspace of $\LL^2_{\ab} (\D)$ and is denoted by 
\[
\text{ $ \HH_0 (\Div 0,\D) := \{ \ubf \in \HH_0 (\Div,\D) \ : \ \Div \ubf  = 0 \}$.}
\]

We use the following  orthogonal decomposition 
\begin{gather} \label{e:La}
\LL^2_{\ab} (\D) = \HH_0 (\Div 0,\D) \oplus \ab^{-1} \gradm H^1 (\D) , 
\end{gather}
where $ \ab^{-1} \gradm H^1 (\D) := \{ \ab^{-1} \nabla p : p \in H^1 (\D)\}$ is understood as a subspace of $\LL^2_{\ab} (\D)$.
Since $\gradm H^1 (\D)$ is closed in $\LL^2 (\D)$ and $\ab \in L^\infty (\D, \RR_{\sym}^{d\times d})$ is uniformly positive,  the subspace $ \ab^{-1} \gradm H^1 (\D)$ is closed in $\LL^2_{\ab} (\D)$. Thus, $ \ab^{-1} \gradm H^1 (\D)$ is a Hilbert space with the norm of $\LL^2_{\ab} (\D)$.

Consequently, the phase space $ \LL^2_{\ab,\beta} (\D) = \LL^2_{\ab} (\D) \oplus L^2_\beta (\D) $ 
admits  the orthogonal decomposition
\begin{gather} \label{e:LabDec}
\LL^2_{\ab,\beta} (\D) = \HH_0 (\Div 0,\D) \oplus \GG_{\ab,\beta} , 
\end{gather}
where 
\[
 \GG_{\ab,\beta} := \ab^{-1} \gradm H^1 (\D) \oplus L^2_{\beta} (\D) .
 \]
Here $ \GG_{\ab,\beta}$ and $\HH_0 (\Div 0,\D)$ are perceived as closed subspaces of  $\LL^2_{\ab,\beta} (\D)$ and so, are supposed to be equipped with the norm of  $\LL^2_{\ab,\beta} (\D)$. 

Note that  $\HH_0 (\Div 0,\D) = \ker \A$ for the closed symmetric operator $\A$ of \eqref{e:A}, and so,
 $\HH_0 (\Div 0,\D)$ is a reducing subspace for  $\A$ and $\A^*$.   
  That is, the decomposition \eqref{e:LabDec} reduces $\A$ to the orthogonal sum
$\A = 0 \oplus \A |_{\GG_{\ab,\beta}}$, where the part 
$\A|_{\HH_0 (\Div 0,\D)}$ of $\A$ in the space $\HH_0 (\Div 0,\D)$ is the zero operator.
Similarly, $\A^* = 0 \oplus \A^* |_{\GG_{\ab,\beta}} = 0 \oplus (\A |_{\GG_{\ab,\beta}})^* $,
where the adjoint $(\A |_{\GG_{\ab,\beta}})^*$ is understood in the sense of the Hilbert space 
$\GG_{\ab,\beta}$.

For the definition of reducing subspaces we refer to the textbook \cite{AG} (more details about reducing subspaces for acoustic operators can be found in \cite[Proposition 6.2]{K24}).

The interplay between the 1st-order and the 2nd-order acoustic equations is shown by the following fact: the parts $ \A |_{\GG_{\ab,\beta}}$ and $ \A^* |_{\GG_{\ab,\beta}}$ of the operators $\A$ and $\A^*$ are unitarily equivalent to the operators $\Bc$ and $\Bc^*$, respectively. Moreover, this induces the connections between the corresponding m-boundary tuples 
$\Tc^1$ and $\Tc^2$ (or between their duals $\Tc_*^1$ and $\Tc_*^2$), and in turn,
induces the unitary equivalence of the m-dissipative restrictions of $\A^*$ and $\Bc^*$
defined by appropriate boundary conditions written in terms of the m-boundary tuples $\Tc^1$ and $\Tc^2$.
These facts are proved in \cite{K24}.

Let us describe briefly this unitary equivalence. 
The  norm in the Hilbert space $H_{1,\ab^{-1}} (\D)$ is defined in such a way that  the map 
\begin{equation} \label{e:aGrad1}
\ab^{-1} \gradm_1 : u \mapsto  \ab^{-1} \nabla u, \qquad \ab^{-1} \gradm_1: H_{1,\ab^{-1}} (\D) \to \ab^{-1} \gradm H^1 (\D),
\end{equation}
is a unitary operator. Note that the Hilbert space $\ab^{-1} \gradm H^1 (\D) $ is equipped with the norm of $\LL^2_{\ab} (\D)$. 
Hence, 
 \begin{gather} \label{e:U}
 \Uc:  \begin{pmatrix} u  \\ p   \end{pmatrix} \mapsto \begin{pmatrix} -\ab^{-1} \nabla u \\ p   \end{pmatrix} 
 \quad \text{ is a unitary operator from $\XX$ onto $\GG_{\ab,\beta} $.}
 \end{gather}
This implies immediately the unitary equivalencies   
$ \Uc^{-1} ( \A|_{\GG_{\ab,\beta}}) \Uc = \Bc $ and 
\[
 \Uc^{-1} (\A|_{\GG_{\ab,\beta}})^* \Uc =   \ii 
\begin{pmatrix} 0 & \Ic_1  \\
\beta^{-1} \Div  \ab^{-1} \gradm_1  & 0  \end{pmatrix},
 \] 
 and so, implies statements (i)-(ii) of Lemma  \ref{l:B}. The remaining statement (iii) of Lemma \ref{l:B} follows now from \eqref{e:mTf}.

\subsection{Generalized impedance boundary conditions and m-dissipativity}
\label{s:OpGenIBC}

Assume that $\Zc:\dom \Zc \subseteq H^{1/2} (\pa \D)\to H^{-1/2} (\pa \D)$ is accretive as an operator from the space $H^{1/2} (\pa \D)$ 
to $ H^{-1/2} (\pa \D)$ in the sense of Section \ref{s:i}.
That is, we assume that $\re \< \Zc h,h\>_{L^2 (\pa \D)} \ge 0$.
Equipping the 2nd-order acoustic differential operation $ \bfr_{\ab,\beta} $ (see \eqref{e:Ac2or}) with the boundary condition 
\begin{gather} \label{e:GIBC2}
\Zc  \ga_0 (p) =  \gan (-\al^{-1} \nabla u)  ,
\end{gather}
we obtain the acoustic operator $\Bc_\Zc: \dom \Bc_\Zc \subset \XX \to \XX$ 
defined as the restriction $\Bc^* \uph_{\dom \Bc_\Zc}$  to 
\[
\dom \Bc_\Zc := \{ \Psi=\{u,p\} \in \dom \Bc^* : \Zc \ga_0 (p) =  \gan (-\al^{-1} \nabla u) \}.
\]
Here  \eqref{e:GIBC2} is called a \emph{generalized impedance boundary condition (GIBC)} associated with the 2nd-order acoustic operator $\Bc_\Zc$. In this context, once can say also that $\Bc_\Zc$ is associated with the  impedance operator $\Zc$, see  \cite{HJN05,K24} (in the case of  Maxwell systems, see  also \cite{ACL18,EK22,EK24}).

\begin{prop}[cf. \cite{EK22,K24}] \label{p:Mdis2}
The following statements are equivalent:
\item[(i)] $\Bc_\Zc$ is m-dissipative in $\XX$;
\item[(ii)] $\Zc$ is densely defined  and maximal accretive ( as an operator from  $H^{1/2} (\pa \D)$ 
to $ H^{-1/2} (\pa \D)$);
\item[(iii)] $\Zc$ is closed and maximal accretive.
\end{prop}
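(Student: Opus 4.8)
The plan is to reduce Proposition~\ref{p:Mdis2} to the abstract result Proposition~\ref{p:m-dis}, which is the exact analogue formulated in terms of an abstract m-boundary tuple. The key observation is that the concrete acoustic setting of this subsection is an instance of the abstract framework of Section~\ref{s:AbsMdis}: the operator $\Bc$ of \eqref{e:B} plays the role of the symmetric operator $A$, its adjoint $\Bc^*$ (given in Lemma~\ref{l:B}(ii)) plays the role of $A^*$, and the m-boundary tuple $\Tc^2 = (H^{1/2}(\pa\D), L^2(\pa\D), H^{-1/2}(\pa\D), \wt\ga_0, (-\ii)\wt\gan)$ from Lemma~\ref{l:B}(iii) plays the role of the abstract tuple $\Tf$. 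With these identifications, $\Hs_{-,+} = H^{1/2}(\pa\D)$, $\Hs_{+,-} = H^{-1/2}(\pa\D)$, $\Hs = L^2(\pa\D)$, $\Ga_0 = \wt\ga_0$ and $\Ga_1 = (-\ii)\wt\gan$.

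The main step is to verify that the concrete GIBC \eqref{e:GIBC2} coincides, under these identifications, with the abstract impedance boundary condition $Z\Ga_0 y = \ii \Ga_1 y$ of Definition~\ref{d:Imp}, with $Z = \Zc$. Indeed, using $\Ga_0\{u,p\} = \wt\ga_0\{u,p\} = \ga_0(p)$ and $\Ga_1\{u,p\} = (-\ii)\wt\gan\{u,p\} = (-\ii)\gan(-\al^{-1}\nabla u)$, the abstract condition $\Zc\,\Ga_0 y = \ii\,\Ga_1 y$ reads $\Zc\,\ga_0(p) = \ii\cdot(-\ii)\gan(-\al^{-1}\nabla u) = \gan(-\al^{-1}\nabla u)$, which is exactly \eqref{e:GIBC2}. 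Hence $\Bc_\Zc$ is precisely the restriction $A_Z$ of $A^* = \Bc^*$ generated by the impedance operator $Z = \Zc$ in the sense of Definition~\ref{d:Imp}. Here I would also note that the accretivity of $\Zc$ as an operator from $H^{1/2}(\pa\D)$ to $H^{-1/2}(\pa\D)$, namely $\re\<\Zc h, h\>_{L^2(\pa\D)} \ge 0$, is the same condition as the abstract accretivity $\re\<Zy, y\>_{\Hs} \ge 0$ under the pivot identification, so the hypotheses match without adjustment.

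Once this identification is in place, Proposition~\ref{p:m-dis} applies verbatim: its equivalences (i)--(iii) state that $A_Z$ is m-dissipative in $\Xf$ if and only if $Z$ is closed and maximal accretive, if and only if $Z$ is densely defined and maximal accretive. Translating back through the dictionary $A_Z = \Bc_\Zc$, $Z = \Zc$, $\Xf = \XX$, these are exactly statements (i)--(iii) of Proposition~\ref{p:Mdis2}.

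The only genuine obstacle is the bookkeeping needed to confirm that $\Tc^2$ really satisfies the standing hypothesis \eqref{e:aBT}, i.e.\ that it is an m-boundary tuple for $\Bc^*$ where $\Bc$ is closed, symmetric, and densely defined; but this is furnished entirely by Lemma~\ref{l:B}, which records precisely that $\Bc$ has these properties and that $\Tc^2$ is an m-boundary tuple for $\Bc^*$. A minor point to check is that the sign and factor-of-$\ii$ conventions in $\Ga_1 = (-\ii)\wt\gan$ combine correctly with the $\ii$ in the abstract condition $Z\Ga_0 = \ii\Ga_1$ to reproduce \eqref{e:GIBC2} with the correct sign, which the computation above confirms. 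Since everything reduces to invoking Proposition~\ref{p:m-dis} through a direct translation, the proof is short, and this is reflected in the parenthetical attribution ``cf.\ \cite{EK22,K24}'' in the statement.
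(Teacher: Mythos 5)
Your proposal is correct and follows exactly the paper's own route: the paper proves Proposition \ref{p:Mdis2} by combining Lemma \ref{l:B} (which supplies the m-boundary tuple $\Tc^2$ and the properties of $\Bc$) with the abstract criterion of Proposition \ref{p:m-dis}. Your write-up merely makes explicit the identification $\Ga_0 = \wt\ga_0$, $\Ga_1 = (-\ii)\wt\gan$ and the sign check showing $Z\Ga_0 y = \ii\Ga_1 y$ reduces to \eqref{e:GIBC2}, details the paper leaves implicit in its one-line proof.
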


 This proposition follows immediately from the combination of Lemma \ref{l:B} with Proposition \ref{p:m-dis}. 
The equivalence (i) $\Leftrightarrow $ (iii) is implicitly present in \cite{K24}, where its analogue is explicitly proved  
for the 1st order acoustic operators (see also \cite{EK22}, where  an analogue for Maxwell systems was obtained).

\begin{rem} \label{r:SA}
The particular case  $\wh \Bc = \Bc_\Zc^*$ of Proposition \ref{p:Mdis2} takes place if and only if $\Zc^\cross = - \Zc^\cross$,
where $\Zc^\cross: \dom \Zc \subseteq H^{1/2} (\pa \D) \to H^{-1/2} (\pa \D)$ is  the $\cross$-adjoint of $\Zc$ \cite{K24}.
This statement can be easily obtained from  Lemma \ref{l:B} and Proposition \ref{p:m-dis}.
\end{rem}

\begin{prop} \label{p:Mdis3}
Assume that $\Zc \in \Lc (H^{1/2} (\pa \D),H^{-1/2} (\pa \D))$ is accretive. 
Then the acoustic operator $\Bc_\Zc$ is m-dissipative in $\XX$.
\end{prop}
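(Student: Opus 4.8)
The plan is to verify that the everywhere-defined accretive operator $\Zc$ fulfils the hypotheses of Proposition \ref{p:Mdis2} and then to invoke the implication (ii)/(iii) $\Rightarrow$ (i) of that proposition to conclude the m-dissipativity of $\Bc_\Zc$. First I would unwind the notation: the assumption $\Zc \in \Lc (H^{1/2} (\pa \D), H^{-1/2} (\pa \D))$ means precisely that $\Zc$ is bounded and that its domain is the entire space, $\dom \Zc = H^{1/2} (\pa \D)$. Accretivity, $\re \<\Zc h, h\>_{L^2 (\pa \D)} \ge 0$ for all $h$, is given.

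The three remaining properties needed in Proposition \ref{p:Mdis2} then follow essentially for free. Density of $\dom \Zc$ in $H^{1/2} (\pa \D)$ is trivial, since the domain is the whole space. Closedness of $\Zc$ is the standard fact that a bounded operator defined on all of a Hilbert space has closed graph. Finally, maximal accretivity is automatic: by the definition in Section \ref{s:AbsMdis}, an accretive extension $\wh \Zc$ of $\Zc$ is proper only if $\dom \wh \Zc \supsetneqq \dom \Zc$; but $\dom \Zc = H^{1/2} (\pa \D)$ already exhausts the ambient space, so no proper extension exists and $\Zc$ is maximal accretive. Having established that $\Zc$ is densely defined, closed, and maximal accretive (equivalently, that statements (ii) and (iii) of Proposition \ref{p:Mdis2} both hold), the implication to statement (i) delivers that $\Bc_\Zc$ is m-dissipative in $\XX$.

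There is no genuine obstacle here: the argument is a routine verification rather than a proof requiring estimates, and the entire analytic content has already been absorbed into Proposition \ref{p:Mdis2} (and, upstream, into Lemma \ref{l:B} and Proposition \ref{p:m-dis}). The only point deserving a moment's care is the reading of the maximality definition in the genuinely non-self-dual setting $H^{1/2} (\pa \D) \neq H^{-1/2} (\pa \D)$; but since maximality is phrased purely through domain inclusions and not through any quantitative bound, everywhere-defined-ness settles it immediately. If one wished to avoid the abstract machinery entirely, an alternative self-contained route would be to check m-dissipativity by hand, estimating $\im (\Bc_\Zc \Phi \mid \Phi)_\XX$ via the abstract integration by parts \eqref{e:AIP} for the tuple $\Tc^2$ and constructing the resolvent on $\CC_+$ directly; this is more laborious and I would prefer the reduction to Proposition \ref{p:Mdis2}.
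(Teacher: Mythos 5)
Your proposal is correct and follows the same route as the paper: observe that an everywhere-defined bounded accretive operator $\Zc$ admits no proper accretive extension (since the definition of maximality here is purely about domain inclusion), hence is maximal accretive, closed, and densely defined, and then invoke Proposition \ref{p:Mdis2}. Your write-up merely spells out the closedness and density verifications that the paper leaves implicit.
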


\begin{proof}
Since an accretive operator $\Zc$ is a bounded operator with $\dom \Zc = H^{1/2} (\pa \D)$, it is  
maximally accretive as an operator from  $H^{1/2} (\pa \D)$ 
to $ H^{-1/2} (\pa \D)$. Proposition \ref{p:Mdis2} completes the proof.
\end{proof}

\subsection{Resolvent compactness and the proof of Theorem \ref{t:main}}
\label{s:ProofMT}

First, we obtain the main result of this paper, Theorem \ref{t:main}, from its abstract version, Theorem \ref{t:absCR}. 
To this end, we use the m-boundary tuple 
\[
\Tc^2 :=(H^{1/2} (\pa \D), L^2 (\pa \D) , H^{-1/2} (\pa \D),  \wt \ga_0, \ (-\ii) \wt \gan ) 
\]
of Lemma \ref{l:B}. 

The Neumann-type boundary condition  $\gan (-\ab^{-1} \nabla u) = 0$ is associated with the selfadjoint acoustic operator $\wh \Bc_{\Nr}$,  which is defined by 
\begin{gather*} \label{e:Bnr}
\wh \Bc_{\Nr} = \Bc^* \uph_{\dom \wh \Bc_{\Nr}}, \quad \dom \wh \Bc_{\Nr} := \{ \{u,p\} \in \dom \Bc^* : \gan (-\ab^{-1} \nabla u) = 0\}   
\end{gather*}
and which corresponds to the operator $\wh A_1$ in the abstract settings of Section \ref{s:CompResAbs}.

\begin{lem}[\cite{L13}] \label{l:AN}
Consider the 1st order acoustic operator $\wh \A_{\Nr}$ associated with the Neumann boundary condition 
$\gan (\vbf) = 0$ in the sense that 
\[
\wh \A_{\Nr} := \A^* \uph_{\dom \wh \A_{\Nr} }, \qquad \dom \wh \A_{\Nr} := \{\{\vbf,p\} \in \dom \A^* \ : \ \gan (\vbf) = 0\} .
\]
 Then the orthogonal decomposition 
\begin{gather*} 
\LL^2_{\ab,\beta} (\D) = \HH_0 (\Div 0,\D) \oplus \GG_{\ab,\beta} , 
\end{gather*}
reduces $\wh \A_{\Nr}$ to the orthogonal sum
$\wh \A_\Nr = 0 \oplus \wh \A_\Nr |_{\GG_{\ab,\beta}}$, where 
$ \wh \A_\Nr |_{\GG_{\ab,\beta}}$ is a selfadjoint operator in the Hilbert space $\GG_{\ab,\beta}$.
Besides, $ \wh \A_\Nr |_{\GG_{\ab,\beta}}$ has compact resolvent and purely discrete spectrum.
\end{lem}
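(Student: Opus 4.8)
The plan is to establish the three assertions of the lemma—reduction, selfadjointness, and compactness of the resolvent—in that order, using throughout that $\HH_0 (\Div 0,\D)=\ker\A$ is, as recorded above \eqref{e:LabDec}, a reducing subspace for $\A$ and $\A^*$.

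\textbf{Reduction and selfadjointness.} First I would verify directly that the orthogonal projection $P$ of $\LL^2_{\ab,\beta}(\D)$ onto $\HH_0(\Div 0,\D)\oplus\{0\}$ reduces $\wh\A_\Nr$. For $\Psi=\{\vbf,p\}\in\dom\wh\A_\Nr=\HH_0(\Div,\D)\times H^1(\D)$, decomposing $\vbf=\vbf_0+\vbf_g$ along \eqref{e:La} with $\vbf_0\in\HH_0(\Div 0,\D)$ and $\vbf_g\in\ab^{-1}\gradm H^1(\D)$, one has $\gan\vbf_0=0$ and $\gan\vbf_g=\gan\vbf=0$, so both $P\Psi=\{\vbf_0,0\}$ and $(I-P)\Psi=\{\vbf_g,p\}$ stay in $\dom\wh\A_\Nr$; moreover $\Div\vbf_0=0$ and $\ab^{-1}\nabla p\perp\HH_0(\Div 0,\D)$ in $\LL^2_\ab$ give $\wh\A_\Nr P\Psi=0=P\wh\A_\Nr\Psi$. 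Hence $\HH_0(\Div 0,\D)\oplus\{0\}\subseteq\ker\wh\A_\Nr$ is reducing and $\wh\A_\Nr=0\oplus\wh\A_\Nr|_{\GG_{\ab,\beta}}$. Selfadjointness of $\wh\A_\Nr$ then comes for free: in the m-boundary tuple $\Tc^1$ of \eqref{e:mTf} one has $\Ga_1=(-\ii)\wh\gan$, so $\wh\A_\Nr=\A^*\uph_{\ker\Ga_1}$ is precisely the operator $\wh A_1$ of \eqref{e:A01} and is therefore selfadjoint; its reduced part $\wh\A_\Nr|_{\GG_{\ab,\beta}}$ is then selfadjoint in $\GG_{\ab,\beta}$.

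\textbf{Compact resolvent.} Via the unitary $\Uc$ of \eqref{e:U} and the correspondence of Section \ref{s:1stOrder}, $\wh\A_\Nr|_{\GG_{\ab,\beta}}$ is unitarily equivalent to the second-order Neumann operator $\wh\Bc_\Nr$ on $\XX=H_{1,\ab^{-1}}(\D)\oplus L^2_\beta(\D)$, so it suffices to show $\wh\Bc_\Nr$ has compact resolvent. As $\wh\Bc_\Nr$ is selfadjoint, this is equivalent to the compactness of the embedding of $\dom\wh\Bc_\Nr$, equipped with its graph norm, into $\XX$. Since $\wh\Bc_\Nr\{u,p\}=\ii\{\Ic_1 p,\beta^{-1}\Div\ab^{-1}\nabla u\}$, the graph norm controls $\nabla u$ and $\nabla p$ in $\LL^2(\D)$, $p$ in $L^2(\D)$, and $\Div(\ab^{-1}\nabla u)$ in $L^2(\D)$, subject to $\gan(\ab^{-1}\nabla u)=0$. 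I would then take a sequence $\{u_n,p_n\}$ bounded in the graph norm and extract an $\XX$-convergent subsequence componentwise. The $p$-component is immediate: $p_n$ is bounded in $H^1(\D)$, so Rellich's theorem for the bounded Lipschitz domain $\D$ yields an $L^2_\beta$-convergent subsequence.

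\textbf{The $u$-component (the main obstacle).} The hard part is upgrading the bound on $\nabla u_n$ in $L^2$ to \emph{convergence} of $\nabla u_n$ in $L^2$; on a merely Lipschitz domain one cannot invoke $H^2$ elliptic regularity, so a variational argument is needed. Normalizing representatives by $\int_\D u_n=0$ and using the Poincaré inequality on the connected domain $\D$, the bound on $\nabla u_n$ makes $u_n$ bounded in $H^1(\D)$, whence Rellich gives $u_n\to u$ in $L^2(\D)$ along a subsequence. Writing $f_n:=\beta^{-1}\Div(\ab^{-1}\nabla u_n)$, bounded in $L^2_\beta$, the condition $\gan(\ab^{-1}\nabla u_n)=0$ turns \eqref{e:IbyP} into the weak Neumann identity $\int_\D\ab^{-1}\nabla u_n\cdot\overline{\nabla\phi}=-\int_\D \beta f_n\,\overline{\phi}$ for all $\phi\in H^1(\D)$. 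Testing with $\phi=u_n-u_m$ yields
\[
\int_\D \ab^{-1}\nabla(u_n-u_m)\cdot\overline{\nabla(u_n-u_m)}=-\int_\D\beta(f_n-f_m)\,\overline{(u_n-u_m)}\le C\,\|u_n-u_m\|_{L^2},
\]
and the uniform positivity of $\ab^{-1}$ together with $u_n\to u$ in $L^2$ forces $\nabla u_n$ to be Cauchy, hence convergent, in $\LL^2(\D)$. This establishes the compact embedding, hence the compact resolvent of $\wh\A_\Nr|_{\GG_{\ab,\beta}}$; the purely discrete spectrum follows at once, since any operator with compact resolvent has $\si=\si_\disc$.
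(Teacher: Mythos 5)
Your proof is correct, but it takes a genuinely different route from the paper: the paper does not prove Lemma \ref{l:AN} at all --- its proof is the one-line citation of \cite[Corollary 7.12]{L13} (with a pointer to \cite{K24}) --- whereas you give a self-contained argument. Your hands-on verification of the reduction via the decomposition \eqref{e:La}--\eqref{e:LabDec} is sound, and identifying $\wh \A_\Nr$ with the operator $\wh A_1$ of \eqref{e:A01} for the m-boundary tuple $\Tc^1$ of \eqref{e:mTf} is a legitimate way to extract selfadjointness from material already in the paper. For the compact resolvent you invert the paper's logical direction: the paper deduces \eqref{e:BN} (compactness for the second-order operator $\wh \Bc_\Nr$) \emph{from} Lemma \ref{l:AN} via the unitary $\Uc$ of \eqref{e:U}, while you prove compactness for $\wh \Bc_\Nr$ directly and transport it back through the same unitary equivalence; this is not circular, since your proof of the second-order statement (Rellich for the $p$-component; Poincar\'e--Wirtinger, Rellich, and the weak Neumann identity obtained from \eqref{e:IbyP} to upgrade $L^2$-convergence of $u_n$ to $\LL^2$-convergence of $\nabla u_n$) uses nothing downstream of the lemma. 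That variational Cauchy argument is the classical proof that the variable-coefficient Neumann Laplacian on a bounded Lipschitz domain has compact resolvent, and you carry it out correctly: uniform positivity of $\ab^{-1}$ bounds the energy from below, while boundedness of $f_n$ in $L^2$ bounds the right-hand side by $\|u_n-u_m\|_{L^2}$. What each approach buys: the paper's citation is short and defers all Lipschitz-domain analysis to \cite{L13}; your proof is self-contained, makes explicit that only Rellich and Poincar\'e are needed from the Lipschitz regularity, and exhibits the mechanism (the graph norm controls $\Div(\ab^{-1}\nabla u)$, which feeds the energy identity). Two cosmetic points to tidy up: extract a single common subsequence for the $u$- and $p$-components, and note that the $\LL^2$-limit of $\nabla u_n$ lies in the closed subspace $\gradm H^1(\D)$, so the limiting equivalence class indeed belongs to $H_{1,\ab^{-1}}(\D)$.
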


This lemma follows from \cite[Corollary 7.12]{L13} (see also \cite{K24}).

Due to the unitary equivalence described in Section \ref{s:1stOrder}, Lemma \ref{l:AN} implies that  
\begin{gather} \label{e:BN}
\text{$\wh \Bc_{\Nr}$ has compact resolvent and purely discrete spectrum.}
\end{gather}

\begin{proof}[Proof of Theorem \ref{t:main}.]
After the preparations done above, Theorem \ref{t:main} follows from 
the combination of Theorem \ref{t:absCR}, Lemma \ref{l:B}, Proposition \ref{p:Mdis2}, and \eqref{e:BN}.
\end{proof}

By Remark \ref{r:SA}, the acoustic operator $\Bc_\Zc$ is selfadjoint in $\XX$ if and only if $\Zc^\cross = -\Zc$. In this case,  Theorem \ref{t:main} can be compimented by the following statement.

\begin{cor} \label{c:SiDisc}
Assume that the impedance operator $\Zc$ satisfies $\Zc^\cross = -\Zc$.
Then $\si(\Bc_\Zc) = \si_\disc (\Bc_\Zc)$ if and only if $\Zc$ is compact.
\end{cor}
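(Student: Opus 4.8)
The plan is to read off this statement as the selfadjoint specialization of the main theorem, combined with the classical spectral dichotomy for selfadjoint (indeed normal) operators. First I would record that the hypothesis $\Zc^\cross = -\Zc$ forces accretivity automatically: for every $h \in \dom \Zc$ one has $\<\Zc h, h\>_{L^2 (\pa \D)} = \<h, \Zc^\cross h\>_{L^2 (\pa \D)} = -\<h, \Zc h\>_{L^2 (\pa \D)} = -\overline{\<\Zc h, h\>_{L^2 (\pa \D)}}$, whence $\re \<\Zc h, h\>_{L^2 (\pa \D)} = 0$; in particular $\Zc$ is accretive. Moreover $\Zc = -\Zc^\cross$ is closed (every $\cross$-adjoint is closed) and skew-$\cross$-adjoint, hence maximal accretive, so Proposition \ref{p:Mdis2} applies and $\Bc_\Zc$ is m-dissipative. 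By Remark \ref{r:SA} the condition $\Zc^\cross = -\Zc$ then makes $\Bc_\Zc$ selfadjoint in $\XX$, exactly as asserted in the paragraph preceding the statement.

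Next I would invoke the spectral fact recorded in Section \ref{s:CompResAbs}: for a selfadjoint (hence normal) operator $T$, the equality $\si(T) = \si_\disc (T)$ is equivalent to the compactness of the resolvent of $T$. Applied to the selfadjoint operator $\Bc_\Zc$, this gives that $\si(\Bc_\Zc) = \si_\disc (\Bc_\Zc)$ holds if and only if $\Bc_\Zc$ has compact resolvent. It then remains to translate the latter into compactness of $\Zc$. Since $\Bc_\Zc$ is already m-dissipative and $\Zc$ is already accretive under our hypothesis, statement (i) of Theorem \ref{t:main} collapses to ``$\Bc_\Zc$ has compact resolvent'' and statement (ii) collapses to ``$\Zc$ is compact''. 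Thus Theorem \ref{t:main} yields the equivalence ``$\Bc_\Zc$ has compact resolvent $\Leftrightarrow$ $\Zc$ is compact'', and chaining the two equivalences completes the argument.

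A more direct route, parallel to the proof of Theorem \ref{t:main}, is to appeal to the abstract Corollary \ref{c:absSiDisc}(ii): taking $A = \Bc$, the m-boundary tuple $\Tc^2$ of Lemma \ref{l:B}, impedance operator $Z = \Zc$, and $A_Z = \Bc_\Zc$, one checks (as in the proof of Theorem \ref{t:main}) that the operator $\wh A_1 = \Bc^*\uph_{\ker \Ga_1}$ is the Neumann-type operator $\wh \Bc_{\Nr}$, which has compact resolvent by \eqref{e:BN}; the hypothesis $Z^\cross = -Z$ is precisely $\Zc^\cross = -\Zc$, and the conclusion of Corollary \ref{c:absSiDisc}(ii) is exactly the claimed equivalence. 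There is no genuine obstacle in either route; the only point meriting a word of care is the verification that $\Zc^\cross = -\Zc$ already secures both accretivity and m-dissipativity (equivalently, selfadjointness), so that the hypotheses of Theorem \ref{t:main} and of Remark \ref{r:SA} are met with no additional assumption.
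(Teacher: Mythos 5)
Your proof is correct and takes essentially the same route as the paper's, which likewise obtains the corollary by combining Remark \ref{r:SA} (selfadjointness of $\Bc_\Zc$ from $\Zc^\cross = -\Zc$), the equivalence for selfadjoint operators between purely discrete spectrum and compact resolvent, and Theorem \ref{t:main}. Your additional verifications (accretivity and maximal accretivity from skew-$\cross$-adjointness) and the alternative abstract route via Corollary \ref{c:absSiDisc}(ii) merely make explicit what the paper's one-line proof leaves implicit.
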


\begin{proof}
The corollary follows immediately from Remark \ref{r:SA} and Theorem \ref{t:main}.
\end{proof}

\begin{rem} \label{r:ess}
Corollary \ref{c:SiDisc} implies the following statement. If  $\Zc^\cross = -\Zc$ and $\Zc:\dom \Zc \subseteq H^{1/2} (\pa \D)\to H^{-1/2} (\pa \D)$ \quad \emph{is not a compact operator from $H^{1/2} (\pa \D)$ to $H^{-1/2} (\pa \D)$}, then 
\[
\si_\ess (\Bc_\Zc) \neq \varnothing
\] 
for the selfadjoint acoustic operator $\Bc_\Zc$. This statement can be used to construct nontrivial examples 
of acoustic operators with a non-empty essential spectrum.
\end{rem}

\section{Impedance boundary conditions and resolvent compactness}

We turn our attention to the impedance boundary conditions 
\begin{gather} \label{e:IBC2}
\zeta  \ga_0 (p) =  \gan (-\al^{-1} \nabla u)  , 
\end{gather}
with impedance coefficients  $\zeta (x)$, $x \in \pa \D$ (see \cite{L83,HJN05,CK13}).
We always assume that 
$\zeta : \pa \D \to \overline{\CC}_\rr$ takes its values in the closed complex right half-plane  $\overline{\CC}_\rr$ and is measurable  (w.r.t. the surface measure of the Lipschitz boundary $\pa \D$).

\subsection{Bounded impedance coefficients}

Consider first the case, where $\zeta : \pa \D \to \overline{\CC}_\rr$ is additionally essentially bounded, i.e., where $\zeta \in L^{\infty} (\pa \D, \overline{\CC}_\rr)$. Then  \eqref{e:IBC2} can be written as $\Zc  \ga_0 (p) =  \gan (-\al^{-1} \nabla u) $ with the accretive impedance operator $\Zc$ equal to the multiplication operator $\mulz : f \mapsto \zeta f$, which is defined on the whole trace space $H^{1/2} (\pa \D)$
and is considered as an operator mapping into the target space $H^{-1/2} (\pa \D)$. 
In this way the definition of Section \ref{s:OpGenIBC} for the acoustic operator $\Bc_\Zc = \Bc_{\mulz}$ is applicable to the impedance boundary condition \eqref{e:IBC2}.

\begin{cor}
If $\zeta \in L^{\infty} (\pa \D, \overline{\CC}_\rr)$, then the acoustic operator 
$\Bc_{\mulz}$ (associated with \eqref{e:IBC2}) is an m-dissipative operator with a compact resolvent and purely discrete spectrum.
\end{cor}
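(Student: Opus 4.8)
The plan is to verify the two equivalent conditions of Theorem~\ref{t:main} for the impedance operator $\Zc=\mulz$, namely accretivity and compactness as an operator from $H^{1/2}(\pa\D)$ to $H^{-1/2}(\pa\D)$, and then read off all three asserted conclusions directly from that theorem. Since $\zeta\in L^\infty(\pa\D,\overline{\CC}_\rr)$, the multiplication operator $\mulz:f\mapsto\zeta f$ is bounded on $L^2(\pa\D)$ and, as explained immediately before the statement, is well defined on all of $H^{1/2}(\pa\D)$ with values in $H^{-1/2}(\pa\D)$. Accretivity is then immediate from
\[
\re\<\mulz h,h\>_{L^2(\pa\D)}=\re\int_{\pa\D}\zeta\,|h|^2=\int_{\pa\D}(\re\zeta)\,|h|^2\ge 0,
\]
which holds because $\re\zeta\ge 0$ a.e.\ by the assumption $\zeta(\x)\in\overline{\CC}_\rr$.

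For the compactness of $\mulz$ I would factor it through the pivot space $L^2(\pa\D)$ of the rigged Hilbert space $H^{1/2}(\pa\D)\imb L^2(\pa\D)\imb H^{-1/2}(\pa\D)$. Writing $\iota_{+}:H^{1/2}(\pa\D)\imb L^2(\pa\D)$ and $\iota_{-}:L^2(\pa\D)\imb H^{-1/2}(\pa\D)$ for the two rigging embeddings and $M:L^2(\pa\D)\to L^2(\pa\D)$, $Mf=\zeta f$, for the multiplication operator on the pivot space (bounded since $\zeta\in L^\infty$), one has $\mulz=\iota_{-}\,M\,\iota_{+}$. The embedding $\iota_{+}$ is compact by \eqref{e:CEs1s2} (take $s_2=1/2$, $s_1=0$), while $M$ and $\iota_{-}$ are bounded; hence $\mulz$ is compact as a composition of a compact operator with bounded ones.

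With accretivity and compactness of $\mulz$ in hand, statement~(ii) of Theorem~\ref{t:main} holds, so the equivalent statement~(i) yields that $\Bc_{\mulz}$ is m-dissipative with compact resolvent, and the final clause of that theorem gives $\si(\Bc_{\mulz})=\si_\disc(\Bc_{\mulz})$. (Alternatively, m-dissipativity alone follows already from Proposition~\ref{p:Mdis3}, since $\mulz\in\Lc(H^{1/2}(\pa\D),H^{-1/2}(\pa\D))$ is accretive.) The only point requiring genuine care—and the main, if modest, obstacle—is the factorization $\mulz=\iota_{-}\,M\,\iota_{+}$: one must check that the operator defined at the $H^{1/2}$-to-$H^{-1/2}$ level via the pairing $\<\cdot,\cdot\>_{L^2(\pa\D)}$ really coincides with the composition of the $L^2(\pa\D)$-multiplication with the two rigging embeddings. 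This is a matter of unwinding the definition of $\mulz$ through that pairing, after which the compactness—and hence the whole statement—is automatic.
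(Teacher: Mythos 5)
Your proposal is correct and follows essentially the same route as the paper: factor $\mulz$ through the pivot space $L^2(\pa\D)$ using the rigging embeddings and the boundedness of multiplication by $\zeta\in L^\infty$ on $L^2(\pa\D)$, invoke the compact embeddings \eqref{e:CEs1s2}, and conclude via Theorem \ref{t:main}. The only (immaterial) difference is that the paper places the compactness in the embedding $L^2(\pa\D)\imb H^{-1/2}(\pa\D)$ rather than in $H^{1/2}(\pa\D)\imb L^2(\pa\D)$, and it leaves the accretivity check (which you spell out correctly) implicit.
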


\begin{proof}
The compact embeddings  \eqref{e:CEs1s2}
imply that  
the embedding operator $\wt \Ic : L^2 (\pa \D) \to H^{-1/2} (\pa \D)$ is compact.
Since $\zeta \in L^{\infty} (\pa \D)$, the multiplication on $\zeta$ can be considered as a bounded operator $\wt \mul_\zeta \in \Lc (H^{1/2} (\pa \D),  L^2 (\pa \D))$. 
Hence, the impedance operator $ \Zc = \mul_\zeta $ can be represented as the product of a bounded and a compact operators, $\Zc = \wt \Ic \wt \mul_\zeta $. Thus, $\Zc$ is compact as an operator from  $H^{1/2} (\pa \D)$ to $ H^{-1/2} (\pa \D)$. 
The application of Theorem \ref{t:main} completes the proof.
\end{proof}

\subsection{Singular impedance coefficients and the proof of Theorem \ref{t:ImpLq}}
\label{s:SingImp}

Consider now the case where the impedance coefficient $\zeta (\cdot)$ is not essentially bounded.

In this case, one needs a more careful interpretation of the multiplication operator $\mul_\zeta$ from 
$ H^{1/2} (\pa \D)$ to  $H^{-1/2} (\pa \D) $ and of the associated impedance boundary condition \eqref{e:IBC2}.
Note that in the case of Maxwell systems there is a variety of nonequivalent interpretations of impedance boundary conditions, see \cite{K94,LL04} and the discussion in \cite{EK22}.

Let $0< s \le 1$. Consider the class $M^s (\pa \D)$ of measurable functions $f: \pa \D \to \CC$ such that
\begin{gather} \label{e:intA}
\left|\int_{\pa \D}  f g h \right|  \lesssim \| g \|_{H^{s} (\pa \D)} \| h \|_{H^{s} (\pa \D)} \quad \text{ for all } g,h \in H^{s} (\pa \D),
\end{gather}
where the notation `$\lesssim$' means that the corresponding inequality is valid after multiplication of the left (or right)  side  on a certain constant $C>0$ independent of other entries; i.e.,  \eqref{e:intA} means $ | \int_{\pa \D} f g h| \le C  \| g \|_{H^{1/2} (\pa \D)} \| h \|_{H^{1/2} (\pa \D)} $ for all $f,g \in H^{1/2} (\pa \D)$ (this inequality assumes that the Lebesgue  integral 
$\int_{\pa \D}  f g h $
w.r.t. the surface measure of $\pa \D$ exists and is finite for all $f,g \in H^{1/2} (\pa \D)$).  

Assume that $f \in M^{1/2} (\pa \D)$. Then  
\[
\mf_f (g,h) := \int_{\pa \D} \zeta g \overline{h} \ ,   \qquad 
\mf_f: H^{1/2} (\pa \D) \times H^{1/2} (\pa \D)  \to \CC, 
\]
is a bounded sesquilinear form on $H^{1/2} (\pa \D) $. Thus, the sesquilinear form $\mf_f$
defines a unique bounded operator $\mul_f \in \Lc (H^{1/2} (\pa \D), H^{-1/2} (\pa \D) )$ by the formula 
\[
\<\mul_f g,h\>_{L^2 (\pa \D)} = \int_{\pa \D} f g \overline{h} , \qquad g,h \in H^{1/2} (\pa \D).
\]

In other words, the assumption $f \in M^{1/2} (\pa \D)$ is equivalent to the statement that $\mul_f$ is a pointwise multiplier from $H^{1/2} (\pa \D)$ to $ H^{-1/2} (\pa \D)$.
More generally, the assumption $f \in M^s (\pa \D)$ means that 
$\mul_f$ can be considered as a pointwise multiplier from $H^s (\pa \D)$ to $ H^{-s} (\pa \D)$.

\begin{cor} \label{c:M12}
Suppose that $\zeta \in M^{1/2} (\pa \D)$ is $ \overline{\CC}_\rr$-valued. Then the acoustic operator $B_{\mulz}$ is m-dissipative.
\end{cor}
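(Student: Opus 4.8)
The plan is to reduce Corollary \ref{c:M12} to Proposition \ref{p:Mdis3}, which already guarantees that any \emph{bounded} accretive impedance operator $\Zc \in \Lc(H^{1/2}(\pa\D), H^{-1/2}(\pa\D))$ generates an m-dissipative acoustic operator $\Bc_\Zc$. By the construction carried out just before the corollary, the hypothesis $\zeta \in M^{1/2}(\pa\D)$ is designed precisely so that the multiplication operator $\mulz = \mul_\zeta$ is a well-defined element of $\Lc(H^{1/2}(\pa\D), H^{-1/2}(\pa\D))$: it is the bounded operator associated through the Riesz-type correspondence with the bounded sesquilinear form $\mf_\zeta(g,h) = \int_{\pa\D}\zeta g\overline{h}$, i.e. $\<\mul_\zeta g, h\>_{L^2(\pa\D)} = \int_{\pa\D}\zeta g\overline{h}$ for all $g,h \in H^{1/2}(\pa\D)$. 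Thus boundedness is already built into the assumption, and the only point still to be verified is that $\mul_\zeta$ is accretive from $H^{1/2}(\pa\D)$ to $H^{-1/2}(\pa\D)$, that is, $\re\<\mul_\zeta h, h\>_{L^2(\pa\D)} \ge 0$ for every $h \in H^{1/2}(\pa\D)$.

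For the accretivity check I would simply set $g = h$ in the defining identity to get $\<\mul_\zeta h, h\>_{L^2(\pa\D)} = \int_{\pa\D}\zeta|h|^2$. The membership $\zeta \in M^{1/2}(\pa\D)$ guarantees, via \eqref{e:intA}, that the Lebesgue integral $\int_{\pa\D}\zeta|h|^2$ exists and is finite, i.e. $\zeta|h|^2$ is absolutely integrable; hence its real and imaginary parts are separately integrable and $\re\int_{\pa\D}\zeta|h|^2 = \int_{\pa\D}(\re\zeta)|h|^2$. Because $\zeta$ is $\overline{\CC}_\rr$-valued we have $\re\zeta \ge 0$ almost everywhere on $\pa\D$, while $|h|^2 \ge 0$ pointwise, so the last integral is nonnegative. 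This yields $\re\<\mul_\zeta h, h\>_{L^2(\pa\D)} \ge 0$, i.e. the accretivity of $\mul_\zeta$.

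Combining the two observations, Proposition \ref{p:Mdis3} applies to $\Zc = \mul_\zeta$ and gives that $\Bc_{\mulz}$ is m-dissipative in $\XX$, which is the assertion of Corollary \ref{c:M12}. I do not anticipate any real difficulty: the substantive analytic content, namely the boundedness of $\mul_\zeta$ between the fractional Sobolev trace spaces, has been encoded into the definition of $M^{1/2}(\pa\D)$, and the $\overline{\CC}_\rr$-valuedness of $\zeta$ transfers to accretivity by the elementary pointwise estimate above. The only step warranting a word of care is the splitting of the complex boundary integral into its real and imaginary parts, which is licensed by the absolute integrability that is part of the definition in \eqref{e:intA}.
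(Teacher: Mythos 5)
Your proposal is correct and follows exactly the paper's own argument: boundedness of $\mul_\zeta$ from $H^{1/2}(\pa\D)$ to $H^{-1/2}(\pa\D)$ is built into the definition of $M^{1/2}(\pa\D)$, accretivity follows from $\re\zeta \ge 0$ a.e., and Proposition \ref{p:Mdis3} finishes the proof. The only difference is that you spell out the accretivity step (setting $g=h$ and using the absolute integrability guaranteed by \eqref{e:intA}), which the paper states without detail.
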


\begin{proof}
The operator $\mulz$ is bounded from $H^{1/2} (\pa \D)$ to $H^{-1/2} (\pa \D)$ since $\zeta \in M^{1/2} (\pa \D)$. Besides, $\mulz$  is accretive since $\zeta$ is $\overline{\CC}_\rr$-valued.
Now, the statement of corollary follows from Proposition \ref{p:Mdis3}. 
\end{proof}

Let $0<s\le 1$ and $ 1\le p <\infty$. Then the compact embedding  
\begin{equation} \label{e:SeLp}
H^s (\pa \D) \imb \imb L^p (\pa \D) \quad \text{ holds if }  \quad 
 \frac{s}{d-1} - \frac{1}{2} >  - \frac{1}{p}.
\end{equation}
The embedding \eqref{e:SeLp} may be well-known to specialists, cf. the  remarks in \cite[Section 6.4]{T02} and the continuous embedding theorems for Besov spaces in \cite{JW84}. However, in the case of Lipschitz boundaries $\pa \D$ or compact Lipschitz manifolds, the author was not able to find a reference that explicitly contain an embedding of this type  for fractional Sobolev spaces. Therefore, we derive in Appendix \ref{a} a proof of \eqref{e:SeLp} from the Jonsson \& Wallin continuous embedding \cite[Proposition 8.5]{JW84}  for Besov spaces on $(d-1)$-subsets of $\RR^d$.

\begin{lem} \label{l:LqMs}
Let $0<s \le  1$, $q > \max \{\frac{d-1}{2s},1\}$,  and $\zeta \in L^q (\pa \D)$. Then $\zeta \in  M^s (\pa \D)$ and the corresponding multiplication operator $\mulz$ is compact as an operator from $H^s (\pa \D)$ to $H^{-s} (\pa \D)$.
\end{lem}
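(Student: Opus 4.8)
The plan is to reduce both assertions to the compact embedding \eqref{e:SeLp}, the threshold $q>\frac{d-1}{2s}$ arising as an exact accounting of Hölder exponents. Write $q'=\frac{q}{q-1}$ for the conjugate exponent (finite because $q>1$) and put $p:=2q'$, so that $\frac1p=\frac12-\frac1{2q}$. For $g,h\in H^s(\pa\D)$ two successive applications of Hölder's inequality yield
\[
\left|\int_{\pa\D}\zeta\,g\,\overline h\,\right|\le\|\zeta\|_{L^q(\pa\D)}\,\|g\overline h\|_{L^{q'}(\pa\D)}\le\|\zeta\|_{L^q(\pa\D)}\,\|g\|_{L^{p}(\pa\D)}\,\|h\|_{L^{p}(\pa\D)}.
\]
Substituting $\frac1p=\frac12-\frac1{2q}$ turns the hypothesis $\frac1p>\frac12-\frac{s}{d-1}$ of \eqref{e:SeLp} into $\frac1{2q}<\frac{s}{d-1}$, i.e.\ exactly $q>\frac{d-1}{2s}$; moreover, when $2s\ge d-1$ the embedding $H^s(\pa\D)\imb\imb L^{p}(\pa\D)$ holds for every finite $p$, so only $q>1$ is needed, which accounts for the $\max$ in the statement. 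Feeding \eqref{e:SeLp} into the inequality above gives $\bigl|\int_{\pa\D}\zeta g\overline h\bigr|\lesssim\|g\|_{H^s(\pa\D)}\|h\|_{H^s(\pa\D)}$, i.e.\ $\zeta\in M^s(\pa\D)$, and hence $\mulz\in\Lc(H^s(\pa\D),H^{-s}(\pa\D))$.

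For compactness I would factor $\mulz$ through $L^2(\pa\D)$ by means of the polar decomposition $\zeta=\om\,|\zeta|$, where $\om:=\zeta/|\zeta|$ on $\{\zeta\neq0\}$ and $\om:=0$ otherwise, so that $|\om|\le1$. Define $T\colon H^s(\pa\D)\to L^2(\pa\D)$, $Tg:=|\zeta|^{1/2}g$. Applying Hölder with exponents $q,q'$ to $\int_{\pa\D}|\zeta|\,|g|^2$ gives $\|Tg\|_{L^2(\pa\D)}^2\le\|\zeta\|_{L^q(\pa\D)}\|g\|_{L^{p}(\pa\D)}^2$, so $T$ is the composition of the compact embedding $H^s(\pa\D)\imb\imb L^{p}(\pa\D)$ from \eqref{e:SeLp} with the bounded multiplier $u\mapsto|\zeta|^{1/2}u$ from $L^{p}(\pa\D)$ to $L^2(\pa\D)$; in particular $T$ is compact. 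Let $T^\cross\in\Lc(L^2(\pa\D),H^{-s}(\pa\D))$ denote its adjoint relative to the $L^2(\pa\D)$-pairing, characterized by $\langle T^\cross\phi,h\rangle_{L^2(\pa\D)}=(\phi\,|\,Th)_{L^2(\pa\D)}$; then $T^\cross$ is compact as well. Since $|\zeta|^{1/2}$ is real-valued, the defining identity of $\mulz$ becomes
\[
\langle\mulz g,h\rangle_{L^2(\pa\D)}=\int_{\pa\D}\zeta\,g\,\overline h=(\,\mul_\om Tg\,|\,Th\,)_{L^2(\pa\D)}=\langle T^\cross\mul_\om Tg,\,h\rangle_{L^2(\pa\D)},
\]
where $\mul_\om$ is multiplication by $\om$ on $L^2(\pa\D)$. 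Hence $\mulz=T^\cross\mul_\om T$, and since $\mul_\om$ is a contraction on $L^2(\pa\D)$ while $T$ is compact, $\mulz$ is compact.

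Because \eqref{e:SeLp} is already at our disposal, I expect no genuine analytic obstacle here; the delicate points are purely arithmetic. The step to get right is that the double Hölder estimate lands the exponent precisely on the threshold $q>\frac{d-1}{2s}$ (with the case $2s\ge d-1$ subsumed as above), together with the clean handling of a complex-valued and possibly unbounded $\zeta$ through the factor $\om$ and the square root $|\zeta|^{1/2}$, which lets the single compact operator $T$ serve on both sides. As an alternative to the factorization, one could argue by truncation: for $\zeta_n:=\zeta\,\mathbf{1}_{\{|\zeta|\le n\}}\in L^\infty(\pa\D)$ each $\mul_{\zeta_n}$ is compact because it factors as $H^s(\pa\D)\imb L^2(\pa\D)\to L^2(\pa\D)\imb\imb H^{-s}(\pa\D)$ (the last embedding being an instance of \eqref{e:CEs1s2}), and the first-paragraph bound gives $\|\mulz-\mul_{\zeta_n}\|_{H^s\to H^{-s}}\lesssim\|\zeta-\zeta_n\|_{L^q(\pa\D)}\to0$ by dominated convergence.
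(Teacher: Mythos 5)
Your proof is correct, and its first half is the same argument as the paper's: both establish $\zeta\in M^s(\pa\D)$ by the double H\"older estimate followed by the compact embedding \eqref{e:SeLp}, and your choice $p=2q'$ coincides exactly with the paper's parametrization $\tfrac1p=\tfrac12-\tfrac{\si}{d-1}$, $q=\tfrac{d-1}{2\si}$ (your handling of the $\max$ with $1$, via the observation that $2s\ge d-1$ makes \eqref{e:SeLp} hold for every finite $p$, matches the paper's constraint $\si<\min\{s,(d-1)/2\}$). Where you genuinely diverge is the compactness step. The paper concludes compactness in one line, implicitly from the one-sided factorization $H^s(\pa\D)\imb\imb L^p(\pa\D)\xrightarrow{\ \mulz\ }L^{p'}(\pa\D)\imb H^{-s}(\pa\D)$, which uses that $\zeta$ is a bounded pointwise multiplier from $L^p$ to $L^{p'}$ together with the dual embedding $L^{p'}(\pa\D)\imb H^{-s}(\pa\D)$. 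You instead symmetrize: writing $\mulz=T^\cross\mul_\om T$ with $T=\mul_{|\zeta|^{1/2}}\circ\bigl(H^s(\pa\D)\imb\imb L^p(\pa\D)\bigr)$ requires only the boundedness of $\mul_{|\zeta|^{1/2}}:L^p(\pa\D)\to L^2(\pa\D)$ and Schauder's theorem for $T^\cross$, so the dual embedding never needs to be invoked; the price is the small bookkeeping with the unimodular factor $\om$, which you handle correctly. Your truncation alternative is also sound and yields a by-product neither of the other arguments states explicitly: the first-paragraph estimate gives $\|\mulz-\mul_{\zeta_n}\|_{\Lc(H^s(\pa\D),H^{-s}(\pa\D))}\lesssim\|\zeta-\zeta_n\|_{L^q(\pa\D)}$, i.e.\ norm-continuity of $\zeta\mapsto\mulz$ on $L^q(\pa\D)$, so that compactness follows by operator-norm approximation with the bounded truncations $\zeta_n$, whose compactness only needs \eqref{e:CEs1s2} rather than \eqref{e:SeLp}.
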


\begin{proof}
Let us take an arbitrary $\si$ such that $0 < \si  < s $ and $ \si < (d-1)/2$. Let $p, p' \in (1,+\infty)$ be such that $\frac{1}{p} + \frac{1}{p'} =1$ and 
$\frac{1}{p}  = \frac{1}{2} - \frac{\si}{d-1} $.
Then $\frac{s}{d-1} - \frac{1}{2} >  - \frac{1}{p}$ and $p' < 2 < p$. Hence, the continuous embeddings \eqref{e:SeLp} holds. 

Let $f \in L^q (\pa \D)$ with $q = \frac{d-1}{2\si}$. Using the Hölder inequality, one gets 
\[
\| f g \|_{L^{p^\prime} (\pa \D)} \le 
\left(\int_{\pa \D} |f|^{p^\prime (p-1)/ (p-2)} \right)^{\frac{p-2}{p^\prime (p-1)} }
\left(\int_{\pa \D} |g|^{p^\prime (p-1)} \right)^{\frac1{p^\prime (p-1)}}
= \|f\|_{L^q (\pa \D)} \| g \|_{L^p(\pa \D)} .
\]
 Hence,
\[
\int_{\pa \D} |f g h| \le \| f g \|_{L^{p^\prime} (\pa \D)} \| h \|_{L^p (\pa \D)} 
\le \|f\|_{L^q (\pa \D)} \| g \|_{L^p (\pa \D)} \| h \|_{L^p (\pa \D)} 
\]
(i.e.,
$\mul_f$ is a pointwise multiplier from $L^p (\pa \D)$ to $L^{p^\prime} (\pa \D)$).
 Combining this with  the embedding \eqref{e:SeLp}, 
one obtains 
\[
\int_{\pa \D} |f g h| \lesssim  \|f\|_{L^q(\pa \D)} \| g \|_{H^s (\pa \D)} \| h \|_{H^s (\pa \D)} .
\]
Thus, $f \in M^s (\pa \D)$.  Since the embedding \eqref{e:SeLp} is compact, we see that the corresponding multiplication operator 
$\mulz :H^s (\pa \D) \to H^{-s} (\pa D)$ is compact.

It remains to notice that for arbitrary $q > \max \{\frac{d-1}{2s},1\}$, we can choose $\si$ 
in the interval $0<\si < \min\{s,(d-1)/2)\}$ such that 
$q = \frac{d-1}{2\si}$.
\end{proof}

The following theorem is the main result of this section. Note that it includes Theorem \ref{t:ImpLq} as its part.

\begin{thm} \label{t:SingImp}
Suppose that $\zeta: \pa \D \to \overline{\CC}_\rr$ satisfies at least one of the following conditions:
\begin{itemize}
\item[(i)] $\zeta \in M^s (\pa \D)$ with a certain $s \in (0,1/2)$;
\item[(ii)] $\zeta \in L^q (\pa \D )$ for a certain $q>d-1$. 
\end{itemize}
Then 
$\Bc_{\mulz}$ is an m-dissipative operator with a compact resolvent, and $\si (\Bc_{\mulz}) =
\si_\disc (\Bc_{\mulz})$.
\end{thm}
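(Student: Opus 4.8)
The plan is to deduce everything from Theorem~\ref{t:main}, which says that $\Bc_{\mulz}$ is m-dissipative with compact resolvent (and hence, by the last clause of that theorem, has purely discrete spectrum) exactly when $\mulz$ is accretive and compact as an operator from $H^{1/2} (\pa \D)$ to $H^{-1/2} (\pa \D)$. Accretivity is free: since $\zeta$ takes values in $\overline{\CC}_\rr$, one has $\re \< \mulz h , h \>_{L^2 (\pa \D)} = \re \int_{\pa \D} \zeta |h|^2 \ge 0$ for all $h \in H^{1/2} (\pa \D)$. Thus the whole task reduces to proving compactness of $\mulz \colon H^{1/2} (\pa \D) \to H^{-1/2} (\pa \D)$ under each hypothesis.

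I would treat case~(i) first and then reduce case~(ii) to it. Assume $\zeta \in M^s (\pa \D)$ with $s \in (0,1/2)$. By the very definition of $M^s (\pa \D)$ (applied with $h$ replaced by $\overline{h}$, which does not change the $H^s$-norm), the form $(g,h) \mapsto \int_{\pa \D} \zeta g \overline{h}$ defines a bounded operator $\mulz \colon H^s (\pa \D) \to H^{-s} (\pa \D)$; since $H^{1/2} (\pa \D) \imb H^s (\pa \D)$, also $\zeta \in M^{1/2} (\pa \D)$, so the impedance operator $\mulz \colon H^{1/2} (\pa \D) \to H^{-1/2} (\pa \D)$ of Theorem~\ref{t:main} is well defined. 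Because $s < 1/2$, the compact embeddings \eqref{e:CEs1s2} give $H^{1/2} (\pa \D) \imb \imb H^s (\pa \D)$ and $H^{-s} (\pa \D) \imb \imb H^{-1/2} (\pa \D)$, and the operator $\mulz \colon H^{1/2} \to H^{-1/2}$ factors as the composition of the compact embedding $H^{1/2} \imb \imb H^s$, the bounded multiplier $H^s \to H^{-s}$, and the embedding $H^{-s} \imb \imb H^{-1/2}$. A composition with a compact factor is compact, so $\mulz \colon H^{1/2} \to H^{-1/2}$ is compact and Theorem~\ref{t:main} closes case~(i).

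For case~(ii), let $\zeta \in L^q (\pa \D)$ with $q > d-1$. Since $d \ge 2$ forces $q > d-1 \ge 1$ and $\frac{d-1}{2q} < \frac{d-1}{2(d-1)} = \frac12$, I can pick any $s$ in the nonempty interval $\bigl( \tfrac{d-1}{2q} , \tfrac12 \bigr)$. This choice makes $q > \frac{d-1}{2s}$ and $q > 1$, i.e.\ $q > \max\{\frac{d-1}{2s},1\}$, so Lemma~\ref{l:LqMs} yields $\zeta \in M^s (\pa \D)$ with $s \in (0,1/2)$. Thus case~(ii) is a special instance of case~(i).

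I do not expect a genuine obstacle, since the analytic heavy lifting is already contained in the compact embedding \eqref{e:SeLp} and in Lemma~\ref{l:LqMs}; the argument is mostly bookkeeping. The only point deserving a line of care is that the two realizations of $\mulz$ --- as the bounded map $H^{1/2} \to H^{-1/2}$ and as the factored composition through $H^s$ and $H^{-s}$ --- coincide, which follows because both are determined by the single integral form $\int_{\pa \D} \zeta g \overline{h}$ on the dense subspace $H^{1/2} (\pa \D) \subseteq H^s (\pa \D)$.
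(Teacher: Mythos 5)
Your proof is correct and follows essentially the same route as the paper: compactness of $\mulz \colon H^{1/2}(\pa \D) \to H^{-1/2}(\pa \D)$ is obtained by factoring through the compact embeddings \eqref{e:CEs1s2} and the bounded multiplier $H^s(\pa \D) \to H^{-s}(\pa \D)$, and then Theorem~\ref{t:main} is applied. The only (cosmetic) differences are that you verify accretivity directly and let Theorem~\ref{t:main} deliver m-dissipativity, where the paper first invokes Corollary~\ref{c:M12}, and that your reduction of case~(ii) to case~(i) via a choice of $s \in \bigl(\tfrac{d-1}{2q},\tfrac12\bigr)$ in Lemma~\ref{l:LqMs} is exactly the alternative the paper mentions parenthetically (the paper instead applies Lemma~\ref{l:LqMs} with $s=1/2$).
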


\begin{proof}
Suppose (i). Let $0<s<1/2$. Then $M^s (\pa \D)\subseteq M^{1/2} (\pa \D)$. Corollary \ref{c:M12} implies that
$\Bc_{\mulz} $ is m-dissipative.  It remains to prove that $\mulz$ is compact as an operator from  $H^{1/2} (\pa \D)$ to $ H^{-1/2} (\pa \D)$ and apply Theorem \ref{t:main}.

Due to the compact embeddings \eqref{e:CEs1s2},
the embedding operators $\Ic_+ : H^{1/2} (\pa \D) \to H^{s} (\pa \D)$ and 
$ \Ic_- : H^{-s} (\pa \D) \to H^{-1/2} (\pa \D)$ are compact.
Since $\zeta \in M^s (\pa \D)$, the operator $\wt \mul_\zeta : g \mapsto \zeta g$ 
is a pointwise multiplier from $H^s (\pa \D)$ to $ H^{-s} (\pa \D)$, i.e.,
$\wt \mul_\zeta  \in \Lc (H^s (\pa \D),  H^{-s} (\pa \D))$. 
The impedance operator $ \Zc = \mul_\zeta $ can be represented as 
$\Zc = \Ic_- \wt \mul_\zeta \Ic_+$ and so is compact. Theorem \ref{t:main} completes the proof.

Suppose (ii). Then Lemma \ref{l:LqMs} with $s=1/2$ and Theorem \ref{t:main} imply the desired statement (alternatively, one can check that (ii) implies (i)).
\end{proof}

\section{Conclusion and discussion}
\label{s:dis}

In Theorem \ref{t:main}, we provided a complete description of impedance operators $\Zc$ that generate  acoustic operators   $\Bc_\Zc$ with compact resolvent.

In the case of impedance boundary conditions \eqref{e:IBC2}, Theorem \ref{t:main} naturally leads to the question of \emph{the characterization of all compact pointwise multipliers  from $H^{1/2} (\pa \D)$ to $H^{-1/2} (\pa \D)$}. We do not know how close is the sufficient condition of Theorem \ref{t:ImpLq} to the optimal result
in terms of $L^q (\pa \Om)$-spaces. 

It should be noticed that the case $d=2$ is especially interesting since the 2-dimensional acoustic equation appears as a dimensionally reduced Maxwell equation in 2-D photonic crystals \cite{FK96,ACL18}. Descriptions of wide classes of m-dissipative boundary conditions associated with discrete spectra are useful for rigorous randomization of absorbing boundary conditions, and so, for  the modeling of leakage of energy to a uncertain or stochastic surrounding medium, see the discussions in \cite{DMTSH14,EK22,K24}.

\appendix

\section{Appendix: embeddings for spaces of fractional order on $\pa \D$}
\label{a}

Let $0<s \le 1$, $1\le p <\infty$, $\frac{1}{2} - \frac{s}{d-1} < \frac{1}{p}$,
and $\D \subset \RR^d$ be a Lipschitz domain.
The goal of this section is to give a proof of \eqref{e:SeLp}, i.e.,
of  the compact embedding \eqref{e:SeLp}
$H^s (\pa \D) \imb \imb L^p (\pa \D)  $.

Due to the compact embeddings \eqref{e:CEs1s2} and the standard continuous embeddings of $L^p $-spaces on the compact Lipschitz boundary $\pa \D$, it is enough to prove 
the continuous embedding 
\begin{equation} \label{e:SeLpA}
H^{s_1} (\pa \D) \imb L^p (\pa \D) 
\end{equation}
for $2 < p<\infty$ and $0<s_1 <s $ such that $\frac{1}{2} - \frac{s_1}{d-1} < \frac{1}{p}$.

For $\si >0$ and $1 \le q \le +\infty$, consider following \cite{JW84} the Besov spaces $B^\si_{q,q} (\pa \D)$  on $\pa \D$ (cf. \cite[Remark 6.4]{T02} and \cite[Chapter 7]{T92}). Note that 
the Lipschitz boundary $\pa \D$ is a closed $(d-1)$-set in the sense of \cite[Sections 2.1.1]{JW84}.
We derive \eqref{e:SeLpA} from Proposition 8.5 of \cite{JW84}. This proposition implies that,  for 
$0 < \si_2 \le \si_1 <+\infty$ and $1 \le q_1 \le q_2 \le +\infty$,  
\begin{equation} \label{e:Bimb}
 B^{\sigma_1}_{q_1,q_1} (\pa \D)  \imb B^{\sigma_2}_{q_2,q_2} (\pa \D) \text{ if $ \si_1  - \frac{d-1}{q_1} \ge  \si_2  -  \frac{d-1}{q_2}$}.
\end{equation}
Note that the  Jonsson \& Wallin results of \cite[Chapter 5]{JW84} concerning the traces 
of $B^\si_{q,q} (\RR^d)$ on $\wt d$-sets with $\wt d \in (0,d]$ essentially reduce \eqref{e:Bimb} to the well-known embedding theorem for Besov spaces in $\RR^d$ (see \cite{T83,JW84}).

Let us prove \eqref{e:SeLpA}. Since $0<s_1 < s \le 1$, the comparison of the standard definition of the space $H^{s_1} (\pa \D)$ in \cite[Section 3.2.1]{M03} with the results of \cite[Sections 5.1.1]{JW84} implies that $H^{s_1} (\pa \D) = B^{s_1}_{2,2} (\pa \D)$ up to equivalence of the norms. 
Let $s_2 \in (0,s_1) $ be such that $\frac{1}{2} - \frac{s_2}{d-1} = \frac{1}{p}$.
Applying \eqref{e:Bimb} with  $\si_1 = s_1 $, $q_1 = 2$, $\si_2 =s_1- s_2$, and $q_2 = p$, 
we get 
\begin{equation} \label{e:Bimb2}
H^{s_1} (\pa \D) = B^{s_1}_{2,2} (\pa \D) \imb    B^{s_1 -s_2 }_{p,p} (\pa \D)  .
\end{equation}
Since $0<s_1 -s_2<1$, the formula for the norm of $B^{s_1 -s_2 }_{p,p} (\pa \D)$ in \cite[Section 5.1.1]{JW84} yields $B^{s_1 -s_2 }_{p,p} (\pa \D)  \imb L^p (\pa \D)$.
Thus, \eqref{e:SeLpA} holds true. This completes the proof of \eqref{e:SeLp}.


\end{document}